\newtheorem{theorem}{Theorem}[section]
\newtheorem{lemma}[theorem]{Lemma}
\newtheorem{proposition}[theorem]{Proposition}
\newtheorem{corollary}[theorem]{Corollary}
\newtheorem{remark}[theorem]{Remark}
\newtheorem{definition}[theorem]{Definition}
\numberwithin{equation}{section}
\def\Xint#1{\mathchoice
{\XXint\displaystyle\textstyle{#1}}%
{\XXint\textstyle\scriptstyle{#1}}%
{\XXint\scriptstyle\scriptscriptstyle{#1}}%
{\XXint\scriptscriptstyle\scriptscriptstyle{#1}}%
\!\int}
\def\XXint#1#2#3{{\setbox0=\hbox{$#1{#2#3}{\int}$ }
\vcenter{\hbox{$#2#3$ }}\kern-.6\wd0}}
\def\dint{\Xint-}
\DeclareMathOperator *{\essosc}{ess\ osc}
\DeclareMathOperator *{\osc}{osc}
\DeclareMathOperator *{\esssup}{ess\ sup}
\DeclareMathOperator *{\essinf}{ess\ inf}
\DeclareMathOperator *{\di}{div} 
\DeclareMathOperator *{\meas}{meas}
\DeclareMathOperator *{\dist}{dist}
\DeclareMathOperator *{\data}{data}
\DeclareMathOperator *{\diam}{diam}
\DeclareMathOperator *{\loc}{loc}
\DeclareMathOperator *{\Lip}{Lip}
\DeclareMathOperator *{\Tr}{Tr}
\DeclareMathOperator *{\BMO}{BMO}
\DeclareMathOperator *{\Proj}{Proj}
\DeclareMathOperator *{\BBR}{\mathbb{R}}
\DeclareMathOperator *{\BBC}{\mathbb{C}}
\newcommand{\cJ}{{\mathcal J}}
\begin{document}
\title[Elliptic PDEs with complex coefficients]{
Boundary value problems for second order elliptic operators with complex coefficients}

\author{Martin Dindo\v{s}}
\address{School of Mathematics, \\
         The University of Edinburgh and Maxwell Institute of Mathematical Sciences, UK}
\email{M.Dindos@ed.ac.uk}

\author{Jill Pipher}
\address{Department of Mathematics, \\ 
	Brown University, USA}
\email{jill\_pipher@brown.edu}

\begin{abstract}
The theory of second order complex coefficient operators of the form $\mathcal{L}=\mbox{div} A(x)\nabla$
has recently been developed under the assumption of $p$-ellipticity. In particular,  if the matrix $A$ is $p$-elliptic, 
the solutions $u$ to $\mathcal{L}u = 0$ will satisfy a higher integrability, even though they may not be continuous in the interior. 
Moreover, these solutions have the property that $|u|^{p/2-1}u \in W^{1,2}_{loc}$.
These properties of solutions were used by Dindo\v{s}-Pipher to solve the $L^p$ Dirichlet problem for $p$-elliptic operators whose
coefficients satisfy a further regularity condition, a Carleson measure condition that has often 
appeared in the literature in the study of real, elliptic divergence form operators.
This paper contains two main results.  First, we establish solvability of the Regularity boundary value problem for 
this class of operators, in the same range as that of the Dirichlet problem. The Regularity problem, even
in the real elliptic setting, is more delicate than the Dirichlet problem because it requires estimates on derivatives of 
solutions.
Second, the Regularity results allow us to extend the previously 
established range of $L^p$ solvability of the Dirichlet problem using a theorem due to Z. Shen for general bounded
sublinear operators. 
\end{abstract}

\maketitle

\section{Introduction}\label{S:Intro}

The theory of elliptic boundary value problems under minimal smoothness assumptions on the boundary or the coefficients has been well-studied in the real-valued setting and there
is a rich literature of results and applications. 
By contrast, the literature in the complex valued setting is much more limited. Some important milestones in the study of complex coefficient operators exist: notable is the resolution of the Kato problem, which can be formulated as a ``Regularity" boundary value problem for operators that satisfy very specific constraints in structure (\cite{AHLMT}).  The challenge in this theory is that solutions to complex coefficient elliptic operators are not necessarily 
continuous, nor do they satisfy even a weak maximum principle, which is typically the starting point for the study of boundary value problems. Some of the results for complex coefficient operators have been proven under the assumption of interior H\"older regularity 
(the De Giorgi-Nash-Moser theory), yet it is not clear how this assumption can be correlated with quantitative verifiable assumptions on the operators.

In this paper we continue the investigation of solvability of boundary value problems for complex valued second order divergence form elliptic operators under a structural algebraic assumption on the matrix known as 
{\it $p$-ellipticity}. This structural assumption was introduced independently in \cite{DPcplx} and \cite{CD}, and is a quantitative strengthening of a condition related to $L^p$-contractivity of elliptic operators that was discovered by Cialdea and Maz'ya (\cite{CM1}). When the coefficients of the operator are real, or when $p=2$, the $p$-ellipticity condition is equivalent to the familiar uniform ellipticity condition.

In \cite{DPcplx} we used the $p$-ellipticity condition to establish a limited interior regularity for solutions to these complex coefficient second order divergence form operators. 
We think of this as a weak substitute for the De Giorgi-Nash-Moser regularity of real valued operators and, in fact, we used a variant of Moser's iteration argument to prove it.
Specifically, we considered there operators of the form
$\mathcal{L}=\mbox{div} A(x)\nabla +B(x)\cdot\nabla$, where the matrix $A$ is $p$-elliptic and $B$ satisfies a natural minimal scaling condition. 
This limited regularity theory allowed us to address the solvability of the $L^p$ Dirichlet problem for a collection of operators with complex coefficients whose matrices are in canonical form, as defined below. (\cite{DPcplx} contains a discussion of how to put an operator with lower order terms in canonical form.)

This results of this paper concern the aforementioned Regularity problem, in which the boundary data is prescribed in the Sobolev space of functions whose tangential derivatives belong
to some $L^p$ space. In analogy with the Dirichlet problem, where one expects to show classical convergence of a solution nontangentially to its boundary data in $L^p$ through the control
of a nontangental maximal function, in this problem one expects to prove nontangential estimates for the 
gradients of the solution in terms of the derivatives of the data on the boundary.  The formulation of these estimates must take into account the fact these solutions and their derivatives do not have pointwise values, but are merely measurable functions in certain Lebesgue spaces.

We now discuss the class of elliptic operators for which Dirichlet and Regularity problems are considered. In \cite{KP01}, a class of real valued second order operators (with drift terms like those defined below) was introduced, and the elliptic measure associated to such operators was shown to belong to the $A_\infty$ class with respect to surface measure on the boundary. This implies that the Dirichlet problem for these operators is solvable with data in $L^p$ for some possibly large value of $p$. The study of this class of operators was motivated by a question of Dahlberg, which in turn was inspired by the fact that these operators arose naturally from a change of variables mapping from Lipschitz into flat domains. Specifically, the coefficients of the matrix $A$ was
assumed to satisfy a Carleson measure.  Examples showed that $A_\infty$ was the optimal result in this regime. Later, a slight strengthening of the Carleson measure condition was shown in \cite{DPP} to imply solvability of the Dirichlet problem for the full range $1 < p < \infty$. We refer to this condition as the 
``small" Carleson condition, defined in Section \ref{S-not}. 

In \cite{DPR}, this Regularity problem was solved for equations of the form $\mathcal{L}=\mbox{div} A(x)\nabla$, with $A$ {\it real and elliptic}, satisfying this small Carleson condition. There are open questions even for operators with real coefficients that satisfy the Carleson condition of
\cite{KP01}, such as solvability of the Regularity problem in $L^p$ for $p$ near 1.

\smallskip
The first main result of this paper is the solvability of the Regularity problem for boundary data $\nabla_Tf\in L^p$, under the assumption that the matrix $A$ is $p$-elliptic and satisfies small Carleson condition.  

\begin{theorem}\label{S3:T0}  Let $1<p<\infty$, and let $\Omega$ be the upper half-space ${\mathbb R}^n_+=\{(x_0,x'):\,x_0>0\mbox{ and } x'\in{\mathbb R}^{n-1}\}$. Consider the operator 
$$ \mathcal Lu = \partial_{i}\left(A^0_{ij}(x)\partial_{j}u\right) 
$$
and assume that $\mathcal L$ can be re-written as
\begin{equation}
 \mathcal Lu = \partial_{i}\left(A_{ij}(x)\partial_{j}u\right) +B_i\partial_iu\label{eq-oper-mod}
\end{equation}
where the matrix $A$ is $p$-elliptic with constants $\lambda_p,\Lambda$, $A_{00}=1$ and $\mathscr{I}m\,A_{0j}=0$ for all $1\leq j \leq n-1$.
Assume also that
\begin{equation}\label{Car_hatAA}
d{\mu}(x)=\sup_{B_{\delta(x)/2}(x)}(|\nabla{A}|^{2}+|B|^2) \delta(x)\,dx
\end{equation}
is a Carleson measure in $\Omega$. 

Then there exist $K=K(\lambda_p,\Lambda,n,p)>0$ and $C(\lambda_p, \Lambda ,n,p)>0$ such that if
\begin{equation}\label{Small-Cond}
\|\mu\|_{\mathcal C} < K
\end{equation}
then the $L^p$ Regularity problem  
 
 \begin{equation}\label{E:R2}
\begin{cases}
\,\,{\mathcal L}u=0 
& \text{in } \Omega,
\\[4pt]
\quad u=f & \text{ for $\sigma$-a.e. }\,x\in\partial\Omega, 
\\[4pt]
\tilde{N}_{p,a}(\nabla u) \in L^{p}(\partial \Omega), &
\end{cases}
\end{equation}
is solvable and the estimate
\begin{equation}\label{Main-Est}
\|\tilde{N}_{p,a} (\nabla u)\|_{L^{p}(\partial \Omega)}\leq C\|\nabla_T f\|_{L^{p}(\partial \Omega;{\BBC})}
\end{equation}
holds for all energy solutions $u$ with datum $f$.
\end{theorem}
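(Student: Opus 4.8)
The plan is to establish the a priori estimate \eqref{Main-Est} for energy solutions $u$ with smooth, compactly supported data $f$, and then to recover the full solvability statement by density and limiting arguments: the uniform bound \eqref{Main-Est}, together with the interior estimates for $\mathcal L$ from \cite{DPcplx}, lets one pass to a limit and produce, for arbitrary $f$ with $\nabla_T f\in L^p(\partial\Omega)$, a solution attaining $f$ nontangentially with $\tilde{N}_{p,a}(\nabla u)\in L^p(\partial\Omega)$. Working first on truncated domains $\{x_0>\varepsilon\}\cap B_R$ and letting $\varepsilon\to0$, $R\to\infty$ only at the end, we may in addition assume a priori that $\|\tilde{N}_{p,a}(\nabla u)\|_{L^p(\partial\Omega)}$ is finite, with all constants independent of the truncation.

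The core is a comparison between $\tilde{N}_{p,a}(\nabla u)$ and an appropriate square function $S_a(\nabla u)$ of the gradient, namely the two inequalities
\begin{equation*}
\big\|\tilde{N}_{p,a}(\nabla u)\big\|_{L^p(\partial\Omega)}\lesssim \big\|S_a(\nabla u)\big\|_{L^p(\partial\Omega)}+\big\|\nabla_T f\big\|_{L^p(\partial\Omega)},\qquad \big\|S_a(\nabla u)\big\|_{L^p(\partial\Omega)}\lesssim \big\|\nabla_T f\big\|_{L^p(\partial\Omega)}+\sqrt{\|\mu\|_{\mathcal C}}\,\big\|\tilde{N}_{p,a}(\nabla u)\big\|_{L^p(\partial\Omega)}.
\end{equation*}
The first ($N\lesssim S$) is the usual telescoping/good-$\lambda$ comparison, with pointwise Moser bounds replaced by the $L^p$-averaged interior and reverse-H\"older estimates for $|u|^{p/2-1}u\in W^{1,2}_{loc}$ from \cite{DPcplx}; this is precisely why one is forced to work with the $L^p$-averaged nontangential maximal function. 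The second inequality is the analytic heart: differentiating $\mathcal Lu=0$ in a tangential direction, $\partial_{x_k}u$ ($1\le k\le n-1$) solves a Carleson perturbation of $\mathcal Lw=0$ with boundary datum $\partial_{x_k}f$, the perturbation terms being controlled by \eqref{Car_hatAA}; for the normal derivative one uses $A_{00}=1$ and $\mathscr{I}m\,A_{0j}=0$ to solve for $\partial_{x_0}^2u$ in terms of the remaining second derivatives. One then runs a Rellich--Ne\v{c}as-type integration by parts in the $p$-adapted framework of \cite{DPcplx}: every error term either contains a factor $\nabla A$ or $B$, and is therefore bounded by $\sqrt{\|\mu\|_{\mathcal C}}\,\|\tilde{N}_{p,a}(\nabla u)\|_{L^p(\partial\Omega)}$ using \eqref{Car_hatAA} together with the $L^p$-averaged maximal function, or it is a boundary term producing $\|\nabla_T f\|_{L^p(\partial\Omega)}$, while the $p$-ellipticity inequality keeps the principal terms of the identity favorably signed. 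The solvability of the $L^p$ Dirichlet problem for $\mathcal L$ and of the $L^{p'}$ Dirichlet problem for the $p'$-elliptic adjoint $\mathcal L^*$, both from \cite{DPcplx}, enter as inputs (for the duality between the two problems and to control auxiliary solutions).

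Adding the two inequalities gives $\|\tilde{N}_{p,a}(\nabla u)\|_{L^p(\partial\Omega)}\lesssim\|\nabla_T f\|_{L^p(\partial\Omega)}+\sqrt{\|\mu\|_{\mathcal C}}\,\|\tilde{N}_{p,a}(\nabla u)\|_{L^p(\partial\Omega)}$; since the left-hand side is a priori finite, choosing $K=K(\lambda_p,\Lambda,n,p)$ so small that the constant in front of $\sqrt{\|\mu\|_{\mathcal C}}$ is $<\tfrac12$ lets us absorb the last term and obtain \eqref{Main-Est}. Removing the truncations and running the limiting argument of the first step completes the proof.

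The main obstacle is the square function estimate. In the real, or $p=2$, case this is essentially a classical Rellich identity argument, but for complex coefficients and general $p$ one has neither continuity of solutions nor a maximum principle, so the integration by parts must be carried out using only the weak $p$-elliptic regularity of \cite{DPcplx}. Checking that the principal terms of the Rellich identity retain a good sign under the $p$-ellipticity condition --- which is where the normalization $A_{00}=1$, $\mathscr{I}m\,A_{0j}=0$ is used decisively --- and that each of the numerous lower-order terms genuinely carries the small factor $\sqrt{\|\mu\|_{\mathcal C}}$ so that \eqref{Small-Cond} renders it absorbable, is the delicate bookkeeping at the heart of the argument.
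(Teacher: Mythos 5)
Your core scheme is essentially the one the paper runs: the proof does rest on the two-sided comparison $\|\tilde{N}_{a}(\nabla u)\|_{L^p}\lesssim\|S_{a}(\nabla u)\|_{L^p}$ (Proposition \ref{S3:C7}, via a good-$\lambda$ inequality on a family of Lipschitz graphs) together with the $p$-adapted square function bound for tangential derivatives, which solve the Carleson-perturbed equations \eqref{eq-partial}, and the algebraic use of $A_{00}=1$, $\mathscr{I}m\,A_{0j}=0$ to express $\partial^2_{00}u$ and control $S(\partial_0 u)$ by $S(\nabla_T u)$ plus a Carleson term (this is \eqref{square02aa}, Corollary \ref{S4:C1} and Corollary \ref{S4:C2}); the principal term is kept positive by $p$-ellipticity exactly as you say. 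One side remark: the paper does not invoke duality with the adjoint $L^{p'}$ Dirichlet problem; the Dirichlet solvability of \cite{DPcplx} enters only to verify finiteness of an auxiliary quantity ($\mathcal{T}_a(\nabla u)$) in the smooth-coefficient case.

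The genuine gap is the justification of the absorption step. Absorbing $\sqrt{\|\mu\|_{\mathcal C}}\,\|\tilde{N}_{p,a}(\nabla u)\|_{L^p}$ requires that this norm be finite a priori, and your truncation $\{x_0>\varepsilon\}\cap B_R$ does not supply that with uniform constants: for an energy solution one only knows $\nabla u\in L^2(\mathbb{R}^n_+)$, so on $\{x_0>\varepsilon\}$ interior estimates control the truncated nontangential maximal function in $L^2(\mathbb{R}^{n-1})$ but give no $L^p$ integrability at spatial infinity when $p<2$, while cutting by $B_R$ destroys the global integration by parts — the lateral boundary contributions are precisely what must be controlled, not discarded. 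This is why the paper devotes most of Sections \ref{SS:43}--\ref{S5} to a different layer of argument: localized estimates on sawtooth domains ${\mathcal O}_{\Delta_d,a}$ carrying an extra corkscrew/sup term (Lemma \ref{S6:L1}), a true $p=2$ starting point where $\nabla u\in L^2$ makes the error terms vanish as $d\to\infty$, an upward bootstrap using the real-variable self-improvement of \cite{DHM} (from $L^{q}$ to $L^{q+\delta}$, with $\delta$ controlled by the constants so that any $p$ in the $p$-ellipticity range is reached in finitely many steps), and a downward extrapolation in the style of \cite{DKV} that first establishes finiteness of $\|\tilde{N}_{2,a}(\nabla u)\|_{L^{2-\varepsilon}}$ before any absorption for $p<2$. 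Without some mechanism of this kind, your ``choose $K$ small and absorb'' conclusion is unjustified for $p\neq 2$; the smoothing/limiting reductions you describe are fine but do not address this point.
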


The second main theorem of the paper extends the range of 
solvability of $\mathcal Lu=0$ with $L^{p}$ Dirichlet boundary data for variable coefficient complex coefficient operators satisfying these Carleson conditions on coefficients.  In the paper \cite{DPcplx} we have considered the solvability in the range $p\in (p_0,p_0')$ where
\begin{equation}\label{eqp0}
p_0=\inf\{p>1:\, \mbox{the matrix $A$ is $p$-elliptic}\}.
\end{equation}

Thanks to the solvability of the Regularity problem (Theorem \ref{S3:T0}) we are now able to use the technique of Z. Shen (\cite{Sh1}, \cite{Sh2}) and
extend the previously established range of solvability of the Dirichlet problem to a larger interval $p\in (p_0,p_0'\frac{n-1}{n-1-p_0'})$. In particular,  
when $n=2,3$ or when $p_0'>n-1$, the range of solvability is extended to all $p\in (p_0,\infty).$

\begin{theorem}\label{S3:T1}  Consider the operator 
$$ \mathcal Lu = \partial_{i}\left(A^0_{ij}(x)\partial_{j}u\right) 
$$
in the domain $\Omega={\mathbb R}^n_+=\{(x_0,x'):\,x_0>0\mbox{ and } x'\in{\mathbb R}^{n-1}\}$. Asume again that $\mathcal L$ can be rewritten as \eqref{eq-oper-mod} and
let $p_0$ be defined as in \eqref{eqp0} and let $p_{max}=\infty$ when $p_0'\ge n-1$, 
$$p_{\max}=\frac{p_0'(n-1)}{n-1-p_0'},$$
otherwise. Finally consider any  $p_0<p<p_{max}$.

Assume further that the matrix $A$, satisfies $A_{00}=1$, $\mathscr{I}m\,A_{0j}=0$ for all $1\leq j \leq n-1$ and let
\begin{equation}\label{Car_hatAA-2}
d{\mu}(x)=\sup_{B_{\delta(x)/2}(x)}(|\nabla{A}|^{2}+|B|^2) \delta(x)\,dx
\end{equation}
be a Carleson measure in $\Omega$. 

Then there exist $K=K(\lambda_p,\Lambda,n,p)>0$ and $C(\lambda_p, \Lambda , n,p)>0$ such that if
\begin{equation}\label{Small-Cond-2}
\|\mu\|_{\mathcal C} < K
\end{equation}
then the $L^p$-Dirichlet problem  
 
 \begin{equation}\label{E:D2}
\begin{cases}
\,\,{\mathcal L}u=0 
& \text{in } \Omega,
\\[4pt]
\quad u=f & \text{ for $\sigma$-a.e. }\,x\in\partial\Omega, 
\\[4pt]
\tilde{N}_{p,a}(u) \in L^{p}(\partial \Omega), &
\end{cases}
\end{equation}
is solvable and the estimate
\begin{equation}\label{Main-Est2}
\|\tilde{N}_{p,a} (u)\|_{L^{p}(\partial \Omega)}\leq C\|f\|_{L^{p}(\partial \Omega;{\BBC})}
\end{equation}
holds for all energy solutions $u$ with datum $f$.
\end{theorem}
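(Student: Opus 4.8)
The plan is to deduce Theorem~\ref{S3:T1} from Theorem~\ref{S3:T0} by invoking Shen's abstract real-variable machinery for bounded sublinear operators. First I would recall the known endpoint input: by \cite{DPcplx}, under the same hypotheses and smallness of $\|\mu\|_{\mathcal C}$, the $L^{p}$-Dirichlet problem is solvable for every $p$ in the self-dual interval $(p_0,p_0')$, and by Theorem~\ref{S3:T0} the $L^{q}$-Regularity problem is solvable for every $q\in(1,\infty)$, in particular at the endpoint $q=p_0'$. The standard duality relationship between the Regularity and Dirichlet problems (Regularity at exponent $q$ implies, up to the usual caveats about energy solutions, Dirichlet at the dual exponent $q'$) then tells us nothing new at $q'=p_0$, but it does supply a \emph{derivative} estimate for solutions with nice boundary data that is the crucial ingredient for a reverse-H\"older/self-improvement argument.

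The heart of the proof is to set up the hypotheses of Shen's theorem (\cite{Sh1},\cite{Sh2}). One fixes the solution operator $f\mapsto u$, considers the sublinear operator given by the nontangential maximal function $T f = \tilde N_{p,a}(u)$ (or, in the localized form Shen requires, a truncated/averaged version on surface balls $\Delta = \Delta_R(Q)\subset\partial\Omega$), and must verify: (i) an $L^{p_1}\to L^{p_1}$ bound for some $p_1>p$ — here one takes $p_1$ slightly larger than $p_0'$, obtained by a further application of the extrapolation already available in the $(p_0,p_0')$ range together with the Regularity estimate; and (ii) the key \emph{weak reverse H\"older inequality on balls}: for solutions $u$ that vanish on $2\Delta$,
\begin{equation}
\left(\dint_{\Delta}|\tilde N(u)|^{p_2}\,d\sigma\right)^{1/p_2}\le C\left(\dint_{2\Delta}|\tilde N(u)|^{2}\,d\sigma\right)^{1/2}+(\text{tail}),
\end{equation}
valid for an exponent $p_2$ \emph{strictly larger than $p_{\max}$}; it is precisely this $L^2\to L^{p_2}$ self-improvement that forces the Sobolev-type gain $p_0'\rightsquigarrow p_0'(n-1)/(n-1-p_0')$. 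To prove (ii) one uses the interior regularity of solutions from \cite{DPcplx} (higher integrability of $\nabla u$, the fact that $|u|^{p/2-1}u\in W^{1,2}_{loc}$), the Caccioppoli inequality, a Poincar\'e--Sobolev inequality in the interior to trade the $W^{1,2}$ control of $\nabla u$ for an $L^{2^*}$ gain, and the Regularity estimate \eqref{Main-Est} of Theorem~\ref{S3:T0} to pass from $\nabla u$ back to $u$ on the boundary; the codimension-one surface measure is what produces the exponent $(n-1)$ in place of $n$. Once (i) and (ii) hold, Shen's theorem yields the $L^{p}$ boundedness of $T$ for all $p_1<p<p_2$, and letting $p_2\uparrow p_{\max}$ and $p_1\downarrow p_0'$ gives boundedness on the whole interval $(p_0,p_{\max})$ after combining with the already-known range $(p_0,p_0')$; existence/uniqueness of the solution with datum $f\in L^p$ then follows by the usual density and energy-solution approximation arguments, together with Fatou-type nontangential convergence as in \cite{DPcplx}.

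I would organize the write-up as: (a) state the precise form of Shen's theorem being used; (b) record the $L^{p_0'}$ Regularity and $L^p$, $p\in(p_0,p_0')$, Dirichlet estimates as the input; (c) prove the weak reverse H\"older estimate (ii), which is the technical core; (d) verify hypothesis (i); (e) invoke Shen to get the a priori estimate \eqref{Main-Est2} on the extended range; (f) upgrade the a priori estimate to genuine solvability of \eqref{E:D2}.

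The main obstacle I expect is step (c), the weak reverse H\"older inequality for $\tilde N(u)$ at an exponent beyond $p_{\max}$. The difficulty is that complex solutions are not continuous and satisfy no maximum principle, so the usual real-variable argument (which controls $\sup|u|$ on a Whitney box by its $L^2$ average via Moser/De Giorgi) is unavailable; one must instead work entirely with the averaged nontangential maximal function $\tilde N_{p,a}$ and exploit the $p$-elliptic interior estimates of \cite{DPcplx} together with the Regularity bound \eqref{Main-Est} to convert gradient information into the needed $L^{p_2}$ control of $u$ on surface balls. Keeping track of the precise Sobolev exponent so that the self-improvement lands exactly at $p_0'(n-1)/(n-1-p_0')$, and handling the tail/error terms generated by the Carleson perturbation and by the non-zero boundary data after a suitable subtraction, is where the care is required.
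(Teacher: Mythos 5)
Your overall strategy is the paper's: regard $f\mapsto \tilde N_{2,a}(u)$ as a sublinear operator, bounded on $L^2$ by \cite{DPcplx} for small Carleson norm, and upgrade it through Shen's localization theorem by proving a weak reverse H\"older inequality for $\tilde N(u)$ on surface balls where the datum vanishes, the exponent gain coming from the Regularity result. Two of your quantitative claims are, however, wrong as stated. First, Theorem \ref{S3:T0} does not give Regularity solvability for every $q\in(1,\infty)$, and not at the endpoint $q=p_0'$: it holds only in the $p$-ellipticity range $(p_0,p_0')$, and in the paper's argument the constants degenerate as $q\to p_0'{}^{-}$; this restriction is exactly what produces $p_{\max}=p_0'(n-1)/(n-1-p_0')$ rather than $p_{\max}=\infty$. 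Second, you assert that the reverse H\"older estimate must hold at an exponent $p_2$ strictly larger than $p_{\max}$; that is unobtainable by this method and also unnecessary. The paper proves \eqref{eq-pf9} only for $p<p_{\max}$, and since Shen's theorem (which needs only the $L^2$ bound as its base hypothesis, not an $L^{p_1}$ bound with $p_1>p$) already yields boundedness for every exponent strictly below the reverse H\"older exponent, taking such exponents increasing to $p_{\max}$ covers the open interval claimed in the theorem; your later phrase ``letting $p_2\uparrow p_{\max}$'' is the correct version, the earlier requirement is not.

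Concerning the technical core (your step (c)), the mechanism is not an interior Poincar\'e--Sobolev $L^{2^*}$ gain. In the paper, $\tilde N(u)$ is split at height comparable to the radius $d$ of $\Delta$; for the low part, since $u$ vanishes on the enlarged surface ball, $u$ is written as a line integral of $\nabla u$ starting from the boundary, which bounds ${\mathcal M}_1(u)$ pointwise by a fractional integral over the $(n-1)$-dimensional boundary of $\tilde N(\nabla u)$ (see \eqref{eq-pf12}); the fractional integration theorem on ${\mathbb R}^{n-1}$ then gives the relation $\frac1p=\frac1q-\frac1{n-1}$. Next, the localized Regularity estimate of Lemma \ref{S6:L1}, self-improved by the Giaquinta-type lemma (Theorem \ref{thm-gi}) in a bootstrap that reaches every $q<p_0'$ but no further, bounds the local $L^q$ average of $\tilde N(\nabla u)$ by its $L^2$ average, and boundary Caccioppoli converts this back into $\tilde N(u)$. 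Your sketch names the right ingredients (vanishing data, Caccioppoli, codimension-one Sobolev, the Regularity estimate), but it omits the self-improvement bootstrap needed to reach all $q<p_0'$ and misattributes the gain to interior Sobolev embedding; as written, step (c) would not deliver the precise exponent $p_{\max}$.
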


In particular observe that $p_{max}=\infty$ in dimensions $2$ and $3$ and that when $n\ge 4$
$$p_{\max}>\frac{2(n-1)}{n-3}.$$

\begin{remark} We address at the end of section \ref{S-not} how we can rewrite any operator $\mathcal L$ as \eqref{eq-oper-mod} with coefficients $A_{0j}$ real and $A_{00}=1$. We require this particular form of our operator in the main section \ref{S4} of this paper.
\end{remark}

In the statement of these two theorems, we've used some notation that will be defined in subsequent sections. We will also recall there the concept of Carleson measure, discuss the notions of $L^p$ solvability and energy solutions and define $\tilde{N}_{p}$ which is a variant of the nontangential maximal function defined using $L^p$ averages of the solution $u$. 

\begin{remark}
Lemma 2.6 of  \cite{DPcplx} shows that $L^q$ averages of solutions on interior balls are controlled bp $L^q$ averages for $q$ in the range $(p_0,\frac{p_0'n}{n-2})$, extending beyond the range of $p$-ellipticity. Thus one can use the $N_q$ nontangential maximal function for such $q$ in the estimate \eqref{Main-Est2}. The arguments for Theorem \ref{S3:T0} show that, similarly, the gradient $\nabla u$ of solutions to the Regularity problem will be locally $L^q$ integrable for $q$ in the range $(p_0,\frac{p_0'n}{n-2})$. In particular, by Sobolev embedding, solvability of the Regularity problem in the regime $p_0' > n-2$ implies that solutions are H\"older continuous. 
\end{remark}

The paper is organized as follows. In Section \ref{S-not}, we define the concept of $p$-ellipticity, the nontangential maximal function, the 
$p$-adapted square 
function, Carleson measures and the notions of solvability of these various boundary value problems.
In Section \ref{SS:43}, we establish bounds for the nontangential maximal function by the square function.
The estimates for the $p$-adapted square functions are established in Section \ref{S4}. In light of \eqref{eq-partial}, square functions that involve tangential
derivatives are easier to handle and we begin by bounding these. We then show that, essentially, the square function with the full gradient can be
controlled by the square functions of tangential derivatives. 
In Sections \ref{S5} and \ref{S6}, we present the proofs of the two main theorems.

\section{Basic notions and definitions}\label{S-not}

\subsection{$p$-ellipticity}
The concept of $p$-ellipticity
was introduced in \cite{CM}, where the authors investigated the $L^p$-dissipativity of second order divergence complex coefficient operators. Later, Carbonaro and Dragi\v{c}evi\'c \cite{CD} gave an equivalent definition and coined the term ``$p$-ellipticity".  It is this definition that was
most useful for the results of  \cite{DPcplx}. 
To introduce this, we define, for $p>1$, the ${\mathbb R}$-linear map $\cJ_p:{\mathbb C}^n\to {\mathbb C}^n$ by
$$\cJ_p(\alpha+i\beta)=\frac{\alpha}{p}+i\frac{\beta}{p'}$$
where $p'=p/(p-1)$ and $\alpha,\beta\in{\mathbb R}^n$.

\begin{definition}\label{pellipticity} Let $\Omega\subset{\mathbb R}^n$. Let $A:\Omega\to M_n(\mathbb C)$, where $M_n(\mathbb C)$ is the space of $n\times n$ complex valued matrices. We say that $A$ is $p$-elliptic if for a.e. $x\in\Omega$
\begin{equation}\label{pEll}
\mathscr{R}e\,\langle A(x)\xi,\cJ_p\xi\rangle \ge \lambda_p|\xi|^2,\qquad\forall \xi\in{\mathbb C}^n
\end{equation}
for some $\lambda_p>0$ and there exists $\Lambda>0$ such that 
\begin{equation}
|\langle A(x)\xi,\eta \rangle| \le \Lambda |\xi| |\eta|, \qquad\forall \xi, \,\eta\in{\mathbb C}^n.
\end{equation}
\end{definition}

It is now easy to observe that the notion of $2$-ellipticity coincides with the usual ellipticity condition for complex matrices.
As shown in \cite{CD} if $A$ is elliptic, then there exists $\mu(A)>0$ such that $A$ is $p$-elliptic if and only if
$\left|1-\frac2p\right|<\mu(A).$
Also $\mu(A)=\infty$ if and only if $A$ is real valued.

\subsection{Nontangential maximal and square functions}
\label{SS:NTS}

On a domain of the form 
\begin{equation}\label{Omega-111}
\Omega=\{(x_0,x')\in\BBR\times{\BBR}^{n-1}:\, x_0>\phi(x')\},
\end{equation}
where $\phi:\BBR^{n-1}\to\BBR$ is a Lipschitz function with Lipschitz constant given by 
$L:=\|\nabla\phi\|_{L^\infty(\BBR^{n-1})}$, define for each point $x=(x_0,x')\in\Omega$
\begin{equation}\label{PTFCC}
\delta(x):=x_0-\phi(x')\approx\mbox{dist}(x,\partial\Omega).
\end{equation}
In other words, $\delta(x)$ is comparable to the distance of the point $x$ from the boundary of $\Omega$.

\begin{definition}\label{DEF-1}
A cone of aperture $a>0$ is a non-tangential approach region to the point $Q=(x_0,x') \in \partial\Omega$ defined as
\begin{equation}\label{TFC-6}
\Gamma_{a}(Q)=\{(y_0,y')\in\Omega:\,a|x_0-y_0|>|x'-y'|\}.
\end{equation}
\end{definition}

We require $1/a>L$, otherwise the aperture of the cone is too large and might not lie inside $\Omega$.  When $\Omega=\BBR^n_+$ all parameters $a>0$ may be considered.
Sometimes it is necessary to truncate $\Gamma(Q)$ at height $h$, in which case we write
\begin{equation}\label{TRe3}
\Gamma_{a}^{h}(Q):=\Gamma_{a}(Q)\cap\{x\in\Omega:\,\delta(x)\leq h\}.
\end{equation}

\begin{equation}\label{SSS-1}
\|S_{a}(w)\|^{2}_{L^{2}(\partial\Omega)}\approx\int_{\Omega}|\nabla w(x)|^{2}\delta(x)\,dx.
\end{equation}

In [DPP], a ``$p$-adapted" square function was introduced. The usual square function is the $p$-adapted square function when $p=2$. In the following definition, when $p<2$ we use the convention that the expression $|\nabla w(x)|^{2} |w(x)|^{p-2}$ is zero whenever
$\nabla w(x)$ vanishes.

\begin{definition}\label{D:Sp}
For $\Omega \subset \mathbb{R}^{n}$, the $p$-adapted square function of $w:\Omega\to {\mathbb C}$ such that $w|w|^{p/2-1}\in W^{1,2}_{loc}(\Omega; {\BBC})$ at $Q\in\partial\Omega$ relative 
to the cone $\Gamma_{a}(Q)$ is defined by
\begin{equation}\label{yrddp}
S_{p,a}(w)(Q):=\left(\int_{\Gamma_{a}(Q)}|\nabla w(x)|^{2} |w(x)|^{p-2}\delta(x)^{2-n}\,dx\right)^{1/2}
\end{equation}
and, for each $h>0$, its truncated version is given by 
\begin{equation}\label{yrddp.2}
S_{p,a}^{h}(w)(Q):=\left(\int_{\Gamma_{a}^{h}(Q)}|\nabla w(x)|^{2}|w(x)|^{p-2}\delta(x)^{2-n}\,dx\right)^{1/2}.
\end{equation}

We further introduce the following convention. When $w:\Omega\to {\mathbb C}^k$ with component functions
$(w_i)_{1\le i\le k}$ we denote by $S_{p,a}(w)(Q)$ the following sum
\begin{equation}\label{yrddp.4}
S_{p,a}(w)(Q):=\sum_{i=1}^k S_{p,a}(w_i)(Q),
\end{equation}
hence for example if $w=\nabla_T u$ then $S_{p,a}(\nabla_T u)(Q)$ denotes
$$\sum_{i=1}^{n-1}S_{p,a}(\partial_i u)(Q).$$
\end{definition}

It is not immediately clear that the integrals appearing in \eqref{yrddp} are well-defined. However, in
\cite{DPcplx}, it was shown that the expressions of the form $|\nabla w(x)|^{2} |w(x)|^{p-2}$, when $w$ is a solution of $\mathcal Lw=0$, are locally integrable and hence the definition of $S_p(w)$ makes sense for such $p$ whenever $p$-ellipticity holds. This in particular applies with some modifications to $w=\nabla_T u$ on ${\mathbb R}^n_+$. Each component of $w$ solves a PDE ${\mathcal L}(w_k)=\partial_i((\partial_k A_{ij})w_j)-\partial_k(B_i)w_i$. The righthand side of this PDE is good enough for the regularity theory developed in \cite{DPcplx} to apply to this more complicated system of equations as well. \vglue1mm

A simple application of Fubini's theorem gives 
\begin{equation}\label{SSS-2}
\|S_{p,a}(w)\|^{p}_{L^{p}(\partial\Omega)}\approx\int_{\Omega}|\nabla w(x)|^{2}|w(x)|^{p-2}\delta(x)\,dx.
\end{equation}

\begin{definition}\label{D:NT} 
For $\Omega\subset\mathbb{R}^{n}$ as above, and for 
a continuous $w: \Omega \rightarrow \mathbb C$, the nontangential maximal function ($h$-truncated nontangential maximal function) of $u$
 at $Q\in\partial\Omega$ relative to the cone $\Gamma_{a}(Q)$,
is defined by
\begin{equation}\label{SSS-2a}
N_{a}(w)(Q):=\sup_{x\in\Gamma_{a}(Q)}|w(x)|\,\,\text{ and }\,\,
N^h_{a}(w)(Q):=\sup_{x\in\Gamma^h_{a}(Q)}|w(x)|.
\end{equation}
Moreover, we shall also consider a related version  of the above nontangential maximal function.
This is denoted by $\tilde{N}_{p,a}$ and is defined using $L^p$ averages over balls in the domain $\Omega$. 
Specifically, given $w\in L^p_{loc}(\Omega;{\BBC})$ we set
\begin{equation}\label{SSS-3}
\tilde{N}_{p,a}(w)(Q):=\sup_{x\in\Gamma_{a}(Q)}w_p(x)\,\,\text{ and }\,\,
\tilde{N}_{p,a}^{h}(w)(Q):=\sup_{x\in\Gamma_{a}^{h}(Q)}w_p(x)
\end{equation}
for each $Q\in\partial\Omega$ and $h>0$ where, at each $x\in\Omega$, 
\begin{equation}\label{w}
w_p(x):=\left(\dint_{B_{\delta(x)/2}(x)}|w(z)|^{p}\,dz\right)^{1/p}.
\end{equation}
\end{definition}

Above and elsewhere, a barred integral indicates an averaging operation. Observe that, given $w\in L^p_{loc}(\Omega;{\BBC})$, the function $w_p$ 
associated with $w$ as in \eqref{w} is continuous and $\tilde{N}_{p,a}(w)=N_a(w_p)$ 
everywhere on $\partial\Omega$.

The $L^2$-averaged nontangential maximal function was introduced in \cite{KP2} in connection with
the Neuman and regularity problem value problems. In the context of $p$-ellipticity, Proposition 3.5 of \cite{DPcplx} shows that there is no difference between 
$L^2$ averages and $L^p$ averages when $w=u$ solves $\mathcal Lu=0$ and that $\tilde{N}_{p,a}(u)$ and $\tilde{N}_{2,a'}(u)$ are comparable in $L^r$ norms for all $r>0$ and all allowable apertures $a,a'$.

In this paper we shall consider $w=\nabla u$. However, as it turns out a modification of the argument following (2.20) of \cite{DPcplx}
applies in our case: each component $w^k=\partial_k u$ of $w$ solves an equation similar to one considered in \cite{DPcplx}, namely
\begin{equation}\label{eq-partial-z}
{\mathcal L}w_k=\partial_i(A_{ij}\partial_jw_k) = \partial_i((\partial_k A_{ij})w_j).
\end{equation}
Observe that the condition $|\nabla A(x)| \leq K (\delta(x))^{-1}$ implies that the right hand side of \eqref{eq-partial-z} is the divergence of a vector in $L^2$ and 
thus the solutions $w_k$ will belong $W^{1,2}_{\text{loc}}$.
We record the regularity results in the following Proposition.

\begin{proposition}\label{Regularity}
Suppose that  $u\in W^{1,2}_{loc}(\Omega;{\BBC})$ is the weak solution of ${\mathcal L}u=\mbox{\rm div} A(x)\nabla u= 0$ in $\Omega$.
Let $p_0 = \inf \{p>1: \text{$A$ is $p$-elliptic}\}$,
and suppose that $A$ has bounded measurable coefficients satisfying 
\begin{equation}\label{Bcond}
|\nabla A(x)| \leq K (\delta(x))^{-1}, \quad\forall x \in \Omega
\end{equation}
where the constant $K$ is uniform, and $\delta(x)$ denotes the distance of $x$ to the boundary of $\Omega$. 
Then we have the following improvement in the regularity of $\nabla u$. For any $B_{4r}(x)\subset\Omega$ and $\varepsilon>0$ there exists $C_\varepsilon>0$ such that
\begin{equation}\label{RHthm1}
\left(\dint_{B_{r}(x)} |\nabla u|^{p} \,dy\right)^{1/{p}}
\le
C_\varepsilon\left(\dint_{B_{2 r}(x)} |\nabla u|^{q} \,dy\right)^{1/{q}}+\varepsilon \left(\dint_{B_{2 r}(x)} |\nabla u|^{2} \,dy\right)^{1/{2}}
\end{equation}
for all $p,q \in (p_0, \frac{p'_0n}{n-2})$. (Here $p_0'=p_0/(p_0-1)$ and when $n=2$ one can take $p,q\in (p_0,\infty)$.) The constant in the estimate depends on the dimension, the $p$-ellipticity constants, $\Lambda$, $K$ and $\varepsilon>0$ but not on $x\in\Omega$, $r>0$ or $u$. 

It follows that for any boundary ball $\Delta=\Delta_d\subset\partial\Omega$, for any $p,q\in  (p_0, \frac{p'_0n}{n-2})$ and for any allowed aperture parameters $a,a'>0$ there exists $m=m(a,a')>1$ such that
\begin{equation}\label{NN}
\|\tilde{N}^d_{p,a}(\nabla u)\|_{L^r\Delta_d)} \lesssim \|\tilde{N}^{2d}_{q,a'}(\nabla u)\|_{L^r(m\Delta_d)}
\end{equation}
for all $r >0$. We also have for the same range of $p$'s the estimate
\begin{equation}\label{RHthm2}
\left(r^2\dint_{B_{r}(x)} |\nabla \partial_ku|^2|\partial_k u|^{p-2} \,dy\right)^{1/{p}}
\le
C_p \left(\dint_{B_{2 r}(x)} |\nabla u|^{2} \,dy\right)^{1/{2}},
\end{equation}
for all $k=0,1,2,\dots,n-1$.
\end{proposition}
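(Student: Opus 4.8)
The plan is to treat the system \eqref{eq-partial-z} as a perturbation of the scalar equation and run the self-improving higher-integrability machinery of \cite{DPcplx} on it. First I would observe that, since $A_{00}=1$, the zeroth derivative $w_0 = \partial_0 u$ can be recovered algebraically from the tangential derivatives and the equation: writing out $\mathcal L u = 0$ gives $\partial_0 u = -\sum_{(i,j)\neq(0,0)} A_{ij}\partial_i u$-type expressions modulo the drift term, so it suffices to obtain \eqref{RHthm1} and \eqref{RHthm2} for $k=1,\dots,n-1$ and then bootstrap to $k=0$; alternatively one simply notes $|\nabla u|\approx |\nabla_T u| + |\partial_0 u|$ and that $\partial_0 u$ is controlled pointwise a.e.\ by $|\nabla_T u|$ together with $\|A\|_\infty$ and $|B|\delta$. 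Next, for each fixed $k\in\{1,\dots,n-1\}$ set $v = w_k = \partial_k u$. By \eqref{eq-partial-z} (including the drift contribution $-\partial_k(B_i)\,\partial_i u$ from the canonical form), $v$ solves $\mathcal L v = \operatorname{div} F$ where $F_i = (\partial_k A_{ij})\partial_j u$, and the hypothesis \eqref{Bcond} gives $|F|\le K\delta(x)^{-1}|\nabla u|$, so on a ball $B_{2r}\subset B_{4r}\subset\Omega$ we have $\|F\|_{L^2(B_{2r})}\lesssim K r^{-1}\|\nabla u\|_{L^2(B_{2r})}$ after using $\delta(x)\approx r$ on $B_{2r}$. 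Thus $v\in W^{1,2}_{loc}$ as claimed, and $v$ satisfies a Caccioppoli inequality and — crucially — the reverse-Hölder / Moser-type self-improvement of \cite{DPcplx} with an inhomogeneous right-hand side of the form $\operatorname{div} F$.

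The heart of the argument is then to feed this into the $p$-elliptic regularity theory. I would invoke Lemma 2.6 of \cite{DPcplx} (the statement cited in the Remark after Theorem \ref{S3:T1}), which says that $L^q$ averages of solutions of $\mathcal L w = 0$ on interior balls dominate $L^p$ averages for $p,q$ in the extended range $(p_0, \tfrac{p_0' n}{n-2})$; the point is that this range is obtained by combining $p$-ellipticity on $(p_0,p_0')$ with one step of Sobolev embedding applied to $|w|^{p/2-1}w\in W^{1,2}_{loc}$, which is why it reaches past $p_0'$ up to $\tfrac{p_0' n}{n-2}$. For the \emph{inhomogeneous} equation $\mathcal L v=\operatorname{div}F$ the same iteration goes through with an extra term; the standard device is to split $v = v_1 + v_2$ on $B_{2r}$ where $v_1$ solves the homogeneous equation with $v_1 = v$ on $\partial B_{2r}$ (or one localizes with a cutoff and writes $v = v_{\rm hom} + v_{\rm inhom}$), apply the $p$-elliptic reverse Hölder inequality to $v_1$, and estimate $v_2$ directly by the energy inequality $\|\nabla v_2\|_{L^2(B_{2r})}\lesssim r\,\|F/?\|$ — more precisely $\|v_2\|_{W^{1,2}}\lesssim \|F\|_{L^2(B_{2r})}\lesssim \varepsilon^{-1}$-type bound after Sobolev, which produces exactly the term $\varepsilon(\dint_{B_{2r}}|\nabla u|^2)^{1/2}$ on the right of \eqref{RHthm1} once one tracks that the smallness is gained from the factor $K$ (or can be absorbed because $F$ lives at the scale of $\nabla u$ in $L^2$, one power rougher than $\nabla v$). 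Getting the precise form of \eqref{RHthm1} — a reverse Hölder between an $L^p$ quantity and an $L^q$ quantity plus an $\varepsilon$-small $L^2$ remainder, with $C_\varepsilon$ independent of $x,r,u$ — requires carefully choosing the cutoff radii ($r$, $2r$ inside $B_{4r}$) so that $\delta\approx$ const on each ball, and iterating the one-step improvement finitely many times to walk $q$ up to any target $p<\tfrac{p_0'n}{n-2}$; scale invariance follows because every term is dimensionless.

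With \eqref{RHthm1} in hand, \eqref{NN} is essentially formal: cover the cone $\Gamma^d_{p,a}(Q)$ over a point $Q\in\Delta_d$ by Whitney-type balls, apply \eqref{RHthm1} on each (with the $\varepsilon$-term absorbed by choosing $\varepsilon$ small relative to the doubling constants, or kept and dominated by $\tilde N_{2,a'}$ which is itself $\lesssim \tilde N_{q,a'}$ for $q\le 2$ after one more application of the reverse Hölder with $q$ replaced by $2$ — note $2\in(p_0,p_0')\subset$ the allowed range), and observe that a ball $B_{r}(x)$ with $x\in\Gamma_a(Q)$, $\delta(x)\le d$ has its double contained in $\Gamma_{a'}(Q')$ for all $Q'$ in a surface ball of radius $\approx\delta(x)$ and in $m\Delta_d$ for suitable $m=m(a,a')$; integrating the pointwise bound $\tilde N^d_{p,a}(\nabla u)(Q)\lesssim M\big(\tilde N^{2d}_{q,a'}(\nabla u)\big)(Q)$-style inequality, or more directly comparing the sup's via the change of aperture lemma, yields the $L^r$ bound for every $r>0$. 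Finally \eqref{RHthm2} is the $p$-adapted analogue: apply the $p$-elliptic Caccioppoli inequality of \cite{DPcplx} to $v = \partial_k u$ — which controls $\int_{B_r}|\nabla v|^2|v|^{p-2}\delta^{2-n}$-type quantities (here with $\delta\approx r$) by $r^{-2}\int_{B_{2r}}|v|^p$ plus the inhomogeneous term from $F$ — and then bound $(\dint_{B_{2r}}|\partial_k u|^p)^{1/p}$ by $(\dint_{B_{2r}}|\nabla u|^2)^{1/2}$ via \eqref{RHthm1}, absorbing the remainder. The main obstacle I anticipate is bookkeeping the inhomogeneous term $\operatorname{div}F$ through the $p$-adapted (nonlinear, because of the $|v|^{p-2}$ weight) iteration of \cite{DPcplx} while keeping the constant in \eqref{RHthm1} genuinely independent of $u$ and producing the clean $C_\varepsilon(\cdots)^{1/q} + \varepsilon(\cdots)^{1/2}$ splitting rather than a single reverse-Hölder constant; this is where the hypothesis \eqref{Bcond} (so that $F$ is $L^2$ at the right scale) and the homogeneous/inhomogeneous splitting of $v$ do the essential work.
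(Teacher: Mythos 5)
Your overall strategy coincides with the paper's: the paper proves Proposition \ref{Regularity} only in outline, by observing that every component $w_k=\partial_k u$ of the gradient satisfies the inhomogeneous equation \eqref{eq-partial-z}, that hypothesis \eqref{Bcond} makes the right-hand side the divergence of a vector field in $L^2$ at the natural scale, and that the Moser-type self-improvement of \cite{DPcplx} (the argument following (2.20) there) then applies to this system. Your homogeneous/inhomogeneous splitting, the Whitney-covering argument for \eqref{NN}, and the $p$-adapted Caccioppoli step for \eqref{RHthm2} are exactly the intended modifications.

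There is, however, one concretely wrong step: the opening claim that, since $A_{00}=1$, one can ``recover $\partial_0 u$ algebraically'' from $\mathcal Lu=0$, and that $\partial_0 u$ is controlled pointwise a.e.\ by $|\nabla_T u|$ together with $\|A\|_\infty$ and $|B|\delta$. The equation only expresses $\partial_0\left(A_{0j}\partial_j u\right)$, i.e.\ second-order information (this is precisely the identity for $\partial^2_{00}u$ that the paper uses in Section \ref{S4}); it gives no pointwise control of $\partial_0 u$ by the tangential gradient, so both the ``algebraic recovery'' and the vague ``bootstrap to $k=0$'' would fail as stated. Note also that $A_{00}=1$ is not even a hypothesis of Proposition \ref{Regularity}. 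Fortunately the detour is unnecessary: \eqref{RHthm1} and \eqref{RHthm2} are interior estimates on balls $B_{4r}(x)\subset\Omega$, and differentiating $\mathcal Lu=0$ in the $x_0$ direction is just as legitimate there as in the tangential directions, so \eqref{eq-partial-z} and your perturbation argument apply verbatim to $w_0=\partial_0 u$; no case distinction in $k$ is needed. A smaller point: the $\varepsilon$-term in \eqref{RHthm1} cannot be ``gained from the factor $K$'', since $K$ is not assumed small in the Proposition; it must be produced by the iteration itself (Young's inequality at each step, exploiting that $F$ enters only through an $L^2$ quantity one derivative below $\nabla w_k$), which your alternative phrasing gestures at but should be made explicit.
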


\subsection{Carleson measures}
\label{SS:Car} 

We begin by recalling the definition of a Carleson measure in a domain $\Omega$ as in \eqref{Omega-111}. 
For $P\in{\BBR}^n$, define the ball centered at $P$ with the radius $r>0$ as
\begin{equation}\label{Ball-1}
B_{r}(P):=\{x\in{\BBR}^n:\,|x-P|<r\}.
\end{equation}
Next, given $Q \in \partial\Omega$, by $\Delta=\Delta_{r}(Q)$ we denote the surface ball  
$\partial\Omega\cap B_{r}(Q)$. The Carleson region $T(\Delta_r)$ is then defined by
\begin{equation}\label{tent-1}
T(\Delta_{r}):=\Omega\cap B_{r}(Q).
\end{equation}

\begin{definition}\label{Carleson}
A Borel measure $\mu$ in $\Omega$ is said to be Carleson if there exists a constant $C\in(0,\infty)$ 
such that for all $Q\in\partial\Omega$ and $r>0$
\begin{equation}\label{CMC-1}
\mu\left(T(\Delta_{r})\right)\leq C\sigma(\Delta_{r}),
\end{equation}
where $\sigma$ is the surface measure on $\partial\Omega$. 
The best possible constant $C$ in the above estimate is called the Carleson norm 
and is denoted by $\|\mu\|_{\mathcal C}$.
\end{definition}

In all that follows we now assume that the coefficients of the matrix $A$ and $B$ of the elliptic operator $\mathcal{L}=\mbox{div} A(x)\nabla +B(x)\cdot\nabla$  
satisfies the following natural conditions.  
First, we assume that the entries $A_{ij}$ of $A$ are in ${\rm Lip}_{loc}(\Omega)$ and the entries of $B$ are $L^\infty_{loc}(\Omega)$. 
Second, we assume that
\begin{equation}\label{CarA}
d\mu(x)=\sup_{B_{\delta(x)/2}(x)}[|\nabla A|^2+|B|^2]\delta(x) \,dx
\end{equation}
is a Carleson measure in $\Omega$. Sometimes, and for certain coefficients of $A$, we will
assume that their Carleson norm $\|\mu\|_{\mathcal{C}}$ is sufficiently small. 
The fact that $\mu$ is a Carleson allows one to relate integrals in $\Omega$ with respect to $\mu$ to boundary integrals involving 
the nontangential maximal function.
We have the following result for our averaged nontangential maximal function (c.f. \cite{DPcplx}).

\begin{theorem}\label{T:Car}
Suppose that $d\nu=f\,dx$ and $d\mu(x)=\left[\sup_{B_{\delta(x)/2}(x) }|f|\right]dx$. Assume that
$\mu$ is a Carleson measure. Then there exists a finite 
constant $C=C(L,a)>0$ such that for every $u\in L^{p}_{loc}(\Omega;{\BBC})$ one has
\begin{equation}\label{Ca-222}
\int_{\Omega}|u(x)|^p\,d\nu(x)\leq C\|\mu\|_{\mathcal{C}} 
\int_{\partial\Omega}\left(\tilde{N}_{p,a}(u)\right)^p\,d\sigma.
\end{equation}
Furthermore, consider $\Omega={\mathbb R}^n_+$ where  $\mu$ and $\nu$ are measures as above supported in $\Omega$ and $\delta(x_0,x')=x_0$. Let
 $h:{\mathbb R}^{n-1}\to {\mathbb R}^+$ be a Lipschitz function with Lipschitz norm $L$
and 
$$\Omega_h=\{(x_0,x'):x_0>h(x')\}.$$
Then for any $\Delta\subset {\mathbb R}^{n-1}$ with $\sup_{\Delta} h\le \mbox{diam}(\Delta)/2$ we have
\begin{equation}\label{Ca-222-x}
\int_{\Omega_h\cap T(\Delta)}|u(x)|^p\,d\nu(x)\leq C\|\mu\|_{\mathcal{C}} 
\int_{\partial\Omega_h\cap T(\Delta)}\left(\tilde{N}_{p,a,h}(u)\right)^p\,d\sigma.
\end{equation}
Here for a point $Q=(h(x'),x')\in\partial\Omega_h$ we define
\begin{equation}
\tilde{N}_{p,a,h}(u)(Q) = \sup_{\Gamma_a(Q)}w,\label{eq-Nh}
\end{equation}
where
\begin{equation}\label{TFC-6x}
\Gamma_{a}(Q)=\Gamma_{a}((h(x'),x'))=\{y=(y_0,y')\in\Omega:\,a|h(x')-y_0|>|x'-y'|\}
\end{equation}
and the $L^p$ averages $w$ are defined by \eqref{w} where the distance $\delta$ is taken with respect to the domain $\Omega={\mathbb R}^n_+$.
\end{theorem}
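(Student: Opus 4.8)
The plan is to prove Theorem \ref{T:Car} — the Carleson measure estimate controlling weighted interior integrals by boundary integrals of the averaged nontangential maximal function — by reducing it to the classical Carleson embedding theorem for the ordinary (pointwise) nontangential maximal function, applied to the continuous function $u_p$ rather than to $u$ itself. Recall from the discussion after Definition \ref{D:NT} that $u_p$ (as defined in \eqref{w}) is continuous on $\Omega$ and $\tilde N_{p,a}(u) = N_a(u_p)$ everywhere on $\partial\Omega$. So the whole statement becomes a statement about $|u(x)|^p$ and $u_p$, and the strategy is to pass from the former to the latter by a self-improvement / Fubini argument exploiting the sup in the definition of $\mu$.

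First I would handle the unbounded statement \eqref{Ca-222}. The key pointwise observation is that for the measure $d\nu = f\,dx$,
\begin{equation*}
\int_\Omega |u(x)|^p f(x)\,dx \le \int_\Omega \left(\sup_{B_{\delta(x)/2}(x)} |u|^p\right)\left(\sup_{B_{\delta(x)/2}(x)}|f|\right)dx
\end{equation*}
is too lossy; instead one uses a Whitney decomposition of $\Omega$ into cubes $Q_j$ with $\ell(Q_j)\approx\mathrm{dist}(Q_j,\partial\Omega)$, on each of which $\delta$ is essentially constant, and on each $Q_j$ one replaces $\int_{Q_j}|u|^p f\,dx$ by $\bigl(\sup_{x\in Q_j}|u|^p\bigr)\,\mu^*(Q_j)$ where $\mu^*$ is comparable to $\mu$ restricted to a fattened cube. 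Then $\sup_{Q_j}|u|^p$ is, up to the averaging scale, dominated by $N_a(u_p)(Q)^p$ for any $Q$ in the shadow of $Q_j$ on the boundary, provided the aperture $a$ is chosen large enough relative to $L$ that the Whitney cube lies in the cone; here one must be careful to pick the averaging ball $B_{\delta(x)/2}(x)$ inside the Whitney-adapted region so that $u_p(x)$ genuinely majorizes a suitable average of $|u|$ over $Q_j$. Summing over $j$ and using that $\mu$ is Carleson to convert $\sum_j \mathbf 1_{\text{shadow}(Q_j)}\,\mu^*(Q_j)$ into a bounded-overlap sum whose total mass over any surface ball is $\le C\|\mu\|_{\mathcal C}\sigma(\Delta)$, one obtains \eqref{Ca-222}. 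This is exactly the classical argument (e.g. as in \cite{DPcplx}); the only new wrinkle is carrying the $L^p$-average $u_p$ through instead of a pointwise value of $u$, which is harmless since $u_p$ is continuous and dominates the relevant local averages.

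For the localized statement \eqref{Ca-222-x} over $\Omega_h\cap T(\Delta)$ with $\partial\Omega_h$ a Lipschitz graph sitting above the flat boundary, I would repeat the same Whitney/Carleson scheme but now with Whitney cubes of $\Omega_h$ and cones $\Gamma_a(Q)$ based at points $Q=(h(x'),x')\in\partial\Omega_h$ as in \eqref{TFC-6x}, while crucially keeping the $L^p$ averages $w$ defined by \eqref{w} with $\delta$ the distance to the \emph{flat} boundary ${\mathbb R}^n_+$ — this is consistent because the hypothesis $\sup_\Delta h \le \mathrm{diam}(\Delta)/2$ guarantees that for $x\in\Omega_h\cap T(\Delta)$ the flat distance $x_0$ is comparable to $\mathrm{dist}(x,\partial\Omega_h)$, so the two Whitney structures are equivalent on the relevant region and $B_{\delta(x)/2}(x)$ still lies inside $\Omega$. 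One needs that the localized Carleson bound $\mu(\Omega_h\cap T(\Delta)\cap B_r(Q))\le C\|\mu\|_{\mathcal C}\,\sigma(\Delta_r)$ continues to hold for surface balls $\Delta_r\subset\partial\Omega_h$ — this follows from the Carleson property of $\mu$ on ${\mathbb R}^n_+$ and the comparability of flat and graph Carleson regions under the small-height hypothesis.

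I expect the main obstacle to be purely bookkeeping: making precise the claim that, for a Whitney cube $Q_j$ of $\Omega$ (or $\Omega_h$), the quantity $\bigl(\fint_{Q_j}|u|^p\bigr)^{1/p}$ is bounded by $\tilde N_{p,a}(u)(Q)$ for every boundary point $Q$ in a fixed-size shadow of $Q_j$, i.e.\ choosing the aperture $a$, the overlap constants, and the fattening factors of the balls $B_{\delta(x)/2}(x)$ consistently so that a single averaging ball centered at a well-chosen $x\in Q_j$ both lies in $\Gamma_a(Q)$ and contains a fixed proportion of $Q_j$; once these geometric constants are pinned down, the estimate is the standard Carleson embedding argument and the constant $C=C(L,a)$ arises exactly from the Whitney overlap count and the cone aperture, with the factor $\|\mu\|_{\mathcal C}$ extracted as usual from Definition \ref{Carleson}.
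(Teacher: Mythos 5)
Your overall reduction is the right one, and it is essentially the route of the source the paper cites for this theorem (\cite{DPcplx}; the present paper gives no proof): pass from $\int_\Omega |u|^p f\,dx$ to $\int_\Omega u_p^p\,d\mu$ by the Fubini/ball-swapping trick (for each $x$ the set of $y$ with $x\in B_{\delta(y)/2}(y)$ has measure $\approx\delta(x)^n$, and for such $y$ one has $f(x)\le \sup_{B_{\delta(y)/2}(y)}|f|$), and then apply a Carleson embedding to the continuous function $u_p$, using $\tilde N_{p,a}(u)=N_a(u_p)$. (Incidentally, your cube-wise bound should use $\fint_{Q_j}|u|^p$ throughout, as you do later, not $\sup_{Q_j}|u|^p$, which can be infinite for $u\in L^p_{loc}$.)

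The genuine gap is in the summation step. You propose to sum the cube-wise bounds by converting $\sum_j \mathbf 1_{\mathrm{shadow}(Q_j)}\,\mu^*(Q_j)$ into a ``bounded-overlap sum whose total mass over any surface ball is $\le C\|\mu\|_{\mathcal C}\sigma(\Delta)$.'' This is false: the shadows of all Whitney cubes in a vertical tower above a fixed boundary point contain that point, so their overlap is unbounded, and the induced boundary measure $\sum_j \mu^*(Q_j)\,\sigma(\mathrm{shadow}_j)^{-1}\mathbf 1_{\mathrm{shadow}_j}$ need not be dominated by $C\|\mu\|_{\mathcal C}\,d\sigma$. For example, if $\mu$ consists of masses $2^{-k(n-1)}$ placed at heights $2^{-k}$ above a single boundary point (a Carleson measure with norm $\approx 1$), the mass this boundary measure assigns to $\Delta_r$ is $\approx \log(1/r)\,\sigma(\Delta_r)$. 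Consequently the cube-by-cube estimate $\fint_{Q_j}|u|^p\lesssim \inf_{\mathrm{shadow}(Q_j)}\tilde N_{p,a}(u)^p$ cannot be summed by any density or overlap count alone. The Carleson hypothesis must instead be exploited through a level-set (stopping-time) argument: for each $\lambda>0$, the union of Whitney cubes on which the $L^p$ average of $u$ exceeds $\lambda$ is contained in the tent over the open boundary set $\{\tilde N_{p,a}(u)>\lambda\}$; Whitney-decomposing that \emph{boundary} open set into surface balls, covering the tent by the corresponding Carleson boxes and using bounded overlap there (this is where Definition \ref{Carleson} enters) gives $\mu(\{u_p>\lambda\})\le C\|\mu\|_{\mathcal C}\,\sigma(\{\tilde N_{p,a}(u)>\lambda\})$, and integration in $\lambda$ yields \eqref{Ca-222}. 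The same distributional argument, run with the graph cones \eqref{TFC-6x}, gives \eqref{Ca-222-x}; note that your shortcut there also leans on the claim that $x_0\approx\mathrm{dist}(x,\partial\Omega_h)$ on $\Omega_h\cap T(\Delta)$, which fails (take $h\equiv\mathrm{diam}(\Delta)/2$ and $x$ just above the graph) — the hypothesis $\sup_\Delta h\le\mathrm{diam}(\Delta)/2$ is only needed to compare tents over surface balls of $\partial\Omega_h$ with flat Carleson regions $T(\Delta_r)$ when invoking the Carleson property of $\mu$ relative to ${\mathbb R}^n_+$.
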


\subsection{The $L^p$-Dirichlet problem}

We recall the definition of $L^p$ solvability of the Dirichlet problem. 
When an operator $\mathcal L$ is as in Theorem \ref{S3:T1} is uniformly elliptic (i.e. $2$-elliptic)
the Lax-Milgram lemma can be applied and guarantees the existence of weak solutions.
That is, 
given any $f\in \dot{B}^{2,2}_{1/2}(\partial\Omega;{\BBC})$, the homogenous space of traces of functions in $\dot{W}^{1,2}(\Omega;{\BBC})$, there exists a unique (up to a constant)
$u\in \dot{W}^{1,2}(\Omega;{\BBC})$ such that $\mathcal{L}u=0$ in $\Omega$ and ${\rm Tr}\,u=f$ on $\partial\Omega$. We call these solutions \lq\lq  energy solutions" and use them to define the notion of solvability of the $L^p$ Dirichlet problem.

\begin{definition}\label{D:Dirichlet} 
Let $\Omega$ be the Lipschitz domain introduced in \eqref{Omega-111} and fix an integrability exponent 
$p\in(1,\infty)$. Also, fix an aperture parameter $a>0$. Consider the following Dirichlet problem 
for a complex valued function $u:\Omega\to{\BBC}$:
\begin{equation}\label{E:D}
\begin{cases}
0=\partial_{i}\left(A_{ij}(x)\partial_{j}u\right) 
+B_{i}(x)\partial_{i}u 
& \text{in } \Omega,
\\[4pt]
u(x)=f(x) & \text{ for $\sigma$-a.e. }\,x\in\partial\Omega, 
\\[4pt]
\tilde{N}_{2,a}(u) \in L^{p}(\partial \Omega), &
\end{cases}
\end{equation}
where the usual Einstein summation convention over repeated indices ($i,j$ in this case) 
is employed. 

We say the Dirichlet problem \eqref{E:D} is solvable for a given $p\in(1,\infty)$ if  there exists a
$C=C(p,\Omega)>0$ such that
for all boundary data
$f\in L^p(\partial\Omega;{\BBC})\cap \dot{B}^{2,2}_{1/2}(\partial\Omega;{\BBC})$ the unique energy solution
satisfies the estimate
\begin{equation}\label{y7tGV}
\|\tilde{N}_{2,a} (u)\|_{L^{p}(\partial\Omega)}\leq C\|f\|_{L^{p}(\partial\Omega;{\BBC})}.
\end{equation}

Similarly, we say the Regularity problem for the same PDE is solvable for a given $p\in(1,\infty)$ if  there exists a
$C=C(p,\Omega)>0$ such that
for all boundary data
$f\in \dot{W}^{1,p}(\partial\Omega;{\BBC})\cap \dot{B}^{2,2}_{1/2}(\partial\Omega;{\BBC})$ the unique (modulo constants) energy solution
satisfies the estimate
\begin{equation}\label{y7tGVx}
\|\tilde{N}_{2,a} (\nabla u)\|_{L^{p}(\partial\Omega)}\leq C\|\nabla_Tf\|_{L^{p}(\partial\Omega;{\BBC})}.
\end{equation}

\end{definition}

\noindent{\it Remark.}  Given $f\in\dot{B}^{2,2}_{1/2}(\partial\Omega;{\BBC})\cap L^p(\partial\Omega;{\BBC})$
the corresponding energy solution constructed above is unique (since the decay implied by the $L^p$ estimates eliminates constant solutions). As the space 
$\dot{B}^{2,2}_{1/2}(\partial\Omega;{\BBC})\cap L^p(\partial\Omega;{\BBC})$ is dense in 
$L^p(\partial\Omega;{\BBC})$ for each $p\in(1,\infty)$, it follows that there exists a 
unique continuous extension of the solution operator
$f\mapsto u$
to the whole space $L^p(\partial\Omega;{\BBC})$, with $u$ such that $\tilde{N}_{2,a} (u)\in L^p(\partial\Omega)$ 
and the accompanying estimate $\|\tilde{N}_{2,a} (u) \|_{L^{p}(\partial \Omega)} 
\leq C\|f\|_{L^{p}(\partial\Omega;{\BBC})}$ being valid. Furthermore, as shown in the Appendix of \cite{DPcplx} for any $f\in L^p(\partial \Omega;\mathbb C)$ the corresponding solution $u$ constructed by the continuous extension attains the datum $f$ as its boundary values in the following sense.
Consider the average $\tilde u:\Omega\to \mathbb C$ defined by
$$\tilde{u}(x)=\dint_{B_{\delta(x)/2}(x)} u(y)\,dy,\quad \forall x\in \Omega.$$
Then 
\begin{equation}
f(Q)=\lim_{x\to Q,\,x\in\Gamma(Q)}\tilde u(x),\qquad\text{for a.e. }Q\in\partial\Omega,
\end{equation}
where the a.e. convergence is taken with respect to the ${\mathcal H}^{n-1}$ Hausdorff measure on $\partial\Omega$.

We can make a similar statement regarding nontangential convergence of gradients for solutions to the Regularity problem.
That is, defining
$${\tilde \nabla u}(x)=\dint_{B_{\delta(x)/2}(x)} \nabla u(y)\,dy,\quad \forall x\in \Omega,$$
the same proof in \cite{DPcplx} yields that

\begin{equation}
\nabla u(Q) =\lim_{x\to Q,\,x\in\Gamma(Q)}\tilde \nabla u(x),\qquad\text{for a.e. }Q\in\partial\Omega,
\end{equation}

and when $\Omega=\BBR^n_+$,

\begin{equation}
\nabla_T f(Q)=\lim_{x\to Q,\,x\in\Gamma(Q)}\tilde \nabla_Tu(x),\qquad\text{for a.e. }Q\in\partial\Omega,
\end{equation}

\vskip2mm
Let us make some observations that explain the structural assumptions we have made in Theorems \ref{S3:T0} and \ref{S3:T1}.
As we have already stated it suffices to formulate the result in the case $\Omega={\mathbb R}^n_+$ by using the pull-back map introduced above. Since Theorem \ref{S3:T1} requires that the coefficients have {\it small} Carleson norm this puts a restriction on the size of the Lipschitz constant $L=\|\nabla\phi\|_{L^\infty}$ of the map $\phi$ that defines the domain $\Omega$ in \eqref{Omega-111}.
The constant $L$ will have also to be small (depending on $\lambda_p$, $\Lambda$, $n$ and $p$). \vglue1mm

For technical reasons in the proof we also need that all coefficients $A_{0j}$, $j=0,1,\dots,n-1$ are real. This can be ensured as follows. When $j>0$ observe that we have 
\begin{equation}
\partial_0([\mathscr{I}m\,A_{0j}]\partial_j u)=\partial_j([\mathscr{I}m\,A_{0j}]\partial_0 u)+(\partial_0 [\mathscr{I}m\,A_{0j}])\partial_ju-([\partial_j \mathscr{I}m\,A_{0j}])\partial_0u\label{eqSWAP}
\end{equation}
which allows to move the imaginary part of the coefficient $A_{0j}$ onto the coefficient $A_{j0}$ at the expense of two (harmless) first order terms. This does not work for the coefficient $A_{00}$. Instead we make the following observation. 

Suppose that the measure \eqref{CarA}  associated to an operator $ \mathcal L = \partial_{i}\left(A_{ij}(x)\partial_{j}\right)  +B_{i}(x)\partial_{i}$  is Carleson. 
Consider a related operator 
$ \tilde{\mathcal L} = \partial_{i}\left(\tilde{A}_{ij}(x)\partial_{j}\right) 
+\tilde{B}_{i}(x)\partial_{i}$,
where $\tilde A = \alpha{A}$ and $\tilde B=\alpha{B} - (\partial_{i}\alpha){A}_{ij}\partial_j$, and
$\alpha\in L^\infty(\Omega)$ is a complex valued function such that $|\alpha(x)|\ge \alpha_0>0$ and $|\nabla\alpha|^2x_0$ is a Carleson measure. 
 
Observe that a weak solution $u$ to $\tilde{\mathcal L}u=0$ is also a weak solution to $\mathcal Lu=0$ and that
the new coefficients of $\tilde A$ and $\tilde B$ also satisfy a Carleson measure condition as in \eqref{CarA}, from the assumption on $\alpha$. We will only require that the coefficient ${\tilde A}_{00}$ is real but we may as well ensure for simplicity that it equals to $1$. Clearly, if we choose $\alpha = {A}_{00}^{-1}$, then the new operator $\tilde L$ will have this property. When ${A}_{00}$ (and hence $\alpha$) is real, then  $\tilde{A}$. Similarly, if ${A} $ is $p$-elliptic and $\mathscr{I}m\,{A}_{00}$ is 
sufficiently small (depending on the ellipticity constants), then $\tilde A$ will also be  $p$-elliptic. However, if $\mathscr{I}m\,\alpha$ is not small,
the $p$-ellipticity, after multiplication of ${A}$ by $\alpha$ may not be preserved. Thus, we assume in our main results (Theorems \ref{S3:T0} and \ref{S3:T1}) the 
$p$-ellipticity of the new matrix  $\tilde A$ which has all coefficients $\tilde A_{0j}$, $j=0,1,\dots,n-1$ real, as this is not implied in the general case from the $p$-ellipticity of the original matrix $A$.

\section{Bounds for the nontangential maximal function by the square function}
\label{SS:43}

We work on $\Omega=\BBR^n_+$ and we assume that the matrix $A$ is $p$-elliptic.  
Our aim in this section is to establish  bounds for the nontangential maximal function by the square function.
The approach necessarily differs from the usual argument in the real scalar elliptic case due to the fact 
that certain estimates, such as interior H\"older regularity of a weak solution, are unavailable for 
the complex coefficient case. Here we deviate from the approach take in \cite{DPcplx} where we worked with $p$-adapted square function and instead focus on the estimates for the usual square function. Our approach is similar to \cite{DHM} for elliptic systems and when possible we refer to result from there.

The major innovation from \cite{DHM} is the use of an entire family of Lipschitz graphs on which the nontangential 
maximal function is large in lieu of a single graph constructed via a stopping time argument. 
This is necessary as we are using $L^2$ averages of solutions to define the nontangential maximal 
function and hence the knowledge of certain bounds for a solution on a single graph provides no 
information about the $L^2$ averages over interior balls.

Let $u$ be an energy solution to
$${\mathcal L}u=\partial_i(A_{ij}\partial_ju)=0,\qquad 
\mbox{in }\Omega={\mathbb R}^n_+.$$
Let $v=\nabla u$, that is $v_k=\partial_ku$, $k=0,1,\dots,n-1$. Let $w=w_2$ be the $L^2$ averages of $v$, that is
\begin{equation}\label{w_new}
w(x):=\left(\dint_{B_{\delta(x)/2}(x)}|v(z)|^{2}\,dz\right)^{1/2}.
\end{equation}

Set
\begin{equation}\label{E}
E_{\nu,a}:=\big\{x'\in\partial\Omega:\,N_{a}(w)(x')>\nu\big\}
\end{equation}
(where, as usual, $a>0$ is a fixed background parameter), 
and consider the map $h:\partial\Omega\to\BBR$ given at each $x'\in\partial\Omega$ by 
\begin{equation}\label{h}
h_{\nu,a}(w)(x'):=\inf\left\{x_0>0:\,\sup_{z\in\Gamma_{a}(x_0,x')}w(z)<\nu\right\}
\end{equation}
with the convention that $\inf\varnothing=\infty$. 
We remark that $h$ differs somewhat from the function that has been used in the argument for scalar equations 
(cf. \cite[pp.\,212]{KP01} and \cite{KKPT}). 

At this point we note that $h_{\nu,a}(w,x')<\infty$ for all points $x'\in\partial\Omega$. Since $u\in \dot{W}^{1,2}(\mathbb R^n_+;\mathbb C)$ it follows that $v\in L^2(\mathbb R^n_+;\mathbb C^n)$. Thus $w$ as an $L^2$ average of $v$ is continuous on the upper half-space and decays to zero as $x_0\to\infty$. Thus $h_{\nu,a}$ is finite everywhere.

We look at some further properties of this function. As in \cite{DHM} we have the following (with identical proof).
\begin{lemma}\label{S3:L5}
Let $w$ be as above \eqref{w_new}. 
Also, fix two positive numbers $\nu,a$. Then the following properties hold.
\vglue2mm

\noindent (i)
The function $h_{\nu, a}(w)$ is Lipschitz, with a Lipschitz constant $1/a$. That is,
\begin{equation}\label{Eqqq-5}
\left|h_{\nu,a}(w)(x')-h_{\nu,a}(w)(y')\right|\leq a^{-1}|x'-y'|
\end{equation}
for all $x',y'\in\partial\Omega$.

\vglue2mm

\noindent (ii)
Given an arbitrary $x'\in E_{\nu,a}$, let $x_0:=h_{\nu,a}(w)(x')$. Then there exists a 
point $y=(y_0,y')\in\partial\Gamma_{a}(x_0,x')$ such that $w(y)=\nu$ and $h_{\nu,a}(w)(y')=y_0$. 		
\end{lemma}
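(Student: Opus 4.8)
My plan is to prove part (i) directly from the definition of $h_{\nu,a}(w)$ as an infimum, using the geometry of the cones $\Gamma_a(x_0,x')$. The key observation is a nesting/translation property of cones: if $x_0 > h_{\nu,a}(w)(y')$, I want to show that the truncated cone $\Gamma_a(x_0 + a^{-1}|x'-y'|, x')$ is contained in $\Gamma_a(h', y')$ for a suitable height $h'$, hence $w < \nu$ throughout it, which forces $h_{\nu,a}(w)(x') \le x_0 + a^{-1}|x'-y'|$. Concretely, if $z = (z_0,z') \in \Gamma_a(t,x')$ with vertex raised appropriately, then $a|t - z_0| > |x'-z'|$, and by the triangle inequality $|y'-z'| \le |x'-z'| + |x'-y'| < a|t-z_0| + |x'-y'|$; choosing the vertex shift by exactly $a^{-1}|x'-y'|$ makes this land inside $\Gamma_a$ with vertex over $y'$. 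Taking the infimum over admissible $x_0$ gives $h_{\nu,a}(w)(x') \le h_{\nu,a}(w)(y') + a^{-1}|x'-y'|$, and swapping the roles of $x'$ and $y'$ gives the reverse inequality, hence \eqref{Eqqq-5}. This also shows finiteness of $h_{\nu,a}(w)$ is inherited locally (already noted before the lemma). The referenced identical proof in \cite{DHM} is exactly this cone-translation argument, so I would cite it and supply the one-line cone inclusion.

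For part (ii), fix $x' \in E_{\nu,a}$ and set $x_0 = h_{\nu,a}(w)(x')$. Since $x' \in E_{\nu,a}$ we have $N_a(w)(x') > \nu$, so $x_0 > 0$ (the infimum in \eqref{h} is over a set that does not include arbitrarily small heights). By continuity of $w$ on $\Omega$ and the definition of the infimum, $\sup_{z \in \Gamma_a(x_0,x')} w(z) = \nu$: it cannot exceed $\nu$ by definition of the infimum (for heights slightly above $x_0$ the sup is $<\nu$, and $\Gamma_a(x_0,x') = \bigcup_{t>x_0}\Gamma_a(t,x')$ up to closure... actually $\Gamma_a(x_0,x') \subset \overline{\Gamma_a(t,x')}$ for $t$ slightly larger, so $w \le \nu$ on $\Gamma_a(x_0,x')$ by continuity), and it cannot be strictly less than $\nu$, else by continuity and the decay of $w$ at infinity the sup would stay below $\nu$ for some smaller height, contradicting minimality of $x_0$. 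Hence there is a sequence $z^{(k)} \in \Gamma_a(x_0,x')$ with $w(z^{(k)}) \to \nu$; since $w$ decays at infinity the sequence stays in a compact subset of $\overline{\Gamma_a(x_0,x')}$, so a subsequence converges to some $y \in \overline{\Gamma_a(x_0,x')}$ with $w(y) = \nu$. Because $w < \nu$ on the open cone is false in general, but $w \le \nu$ there and $w(y)=\nu$ with $y$ a limit point, the point $y$ must lie on the boundary $\partial\Gamma_a(x_0,x')$ — if $y$ were interior, a whole neighborhood would have $w < \nu$ strictly... no: I instead argue that any point of $\Gamma_a(x_0,x')$ where $w = \nu$ can be pushed to the boundary. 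The cleanest route: among points of $\overline{\Gamma_a(x_0,x')}$ where $w$ attains its max $\nu$, pick one, call it $y = (y_0,y')$; then I claim $h_{\nu,a}(w)(y') = y_0$. Indeed $h_{\nu,a}(w)(y') \le y_0$ is false would mean... here I use that $y \in \partial\Gamma_a(x_0,x')$, i.e. $a(x_0 - y_0) = |x'-y'|$ (boundary of the downward cone), and combine with the Lipschitz bound from (i): $y_0 = x_0 - a^{-1}|x'-y'| \le h_{\nu,a}(w)(y')$ would contradict... I need $y_0 \ge h_{\nu,a}(w)(y')$ from (i) applied as $h_{\nu,a}(w)(y') \le h_{\nu,a}(w)(x') + a^{-1}|y'-x'| = x_0 + a^{-1}|x'-y'|$, which is too weak; the right inequality is $h_{\nu,a}(w)(x') \le h_{\nu,a}(w)(y') + a^{-1}|x'-y'|$, i.e. $x_0 - a^{-1}|x'-y'| \le h_{\nu,a}(w)(y')$, so $y_0 \le h_{\nu,a}(w)(y')$. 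Conversely $w(y) = \nu$ and $y \in \Gamma_a(y_0 + \varepsilon, y')$'s closure forces $\sup_{\Gamma_a(t,y')} w \ge \nu$ for all $t < y_0$ near $y_0$ (by continuity, points near $y$ in such cones have $w$ near $\nu$)... so $h_{\nu,a}(w)(y') \ge y_0$, giving equality.

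The main obstacle I anticipate is part (ii): making the compactness/limit argument fully rigorous requires care about whether the supremum of $w$ over the (unbounded) cone is actually attained, and about the precise geometry identifying the attainment point as lying on $\partial\Gamma_a(x_0,x')$ rather than in the interior or "at infinity." The decay of $w$ at infinity (which follows since $v \in L^2(\mathbb{R}^n_+;\mathbb{C}^n)$, as noted in the paragraph before the lemma) is essential to rule out escape to infinity and to guarantee a maximizing point exists in $\overline{\Gamma_a(x_0,x')}$. The identification $w(y) = \nu$ with $h_{\nu,a}(w)(y') = y_0$ then follows from combining the two one-sided estimates above: the Lipschitz property of (i) gives $y_0 \le h_{\nu,a}(w)(y')$, while the fact that $w$ equals $\nu$ arbitrarily close to $y$ inside downward cones with vertex over $y'$ at heights approaching $y_0$ gives $h_{\nu,a}(w)(y') \le y_0$. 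Everything else is routine cone geometry, and I would organize the write-up as: (1) cone inclusion lemma $\Rightarrow$ (i); (2) continuity + decay $\Rightarrow$ sup over $\Gamma_a(x_0,x')$ equals $\nu$ and is attained; (3) combine (i) with local behavior of $w$ near the maximizer $\Rightarrow$ (ii).
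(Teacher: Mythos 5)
Your part (i) is correct and is exactly the cone-translation argument the paper has in mind when it cites \cite{DHM}: the inclusion $\Gamma_a(t,x')\subset\Gamma_a\bigl(t-a^{-1}|x'-y'|,y'\bigr)$, monotonicity of the supremum in the vertex height, and symmetry in $x',y'$. The compactness part of (ii) is also sound: continuity of $w$, its decay at infinity (from $v\in L^2(\mathbb{R}^n_+)$), and the fact that $\sup_{\Gamma_a(t,x')}w<\nu$ for \emph{every} $t>x_0$ (the admissible set in \eqref{h} is an up-set), which together with $\Gamma_a(x_0,x')=\bigcup_{t>x_0}\Gamma_a(t,x')$ gives the strict bound $w<\nu$ on the open cone. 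Your worry that ``$w<\nu$ on the open cone is false in general'' is unfounded, and it is precisely this strictness that forces any maximizer $y$ onto $\partial\Gamma_a(x_0,x')$; you should state it rather than hedge.

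The genuine gap is in the identification $h_{\nu,a}(w)(y')=y_0$, and it comes from a cone-orientation sign error. The cones $\Gamma_a(x_0,x')$ in \eqref{h} must be the upward, vertically translated cones $\{(z_0,z'):\,a(z_0-x_0)>|x'-z'|\}$ (with a downward cone, or the literal double cone, the supremum is not monotone decreasing in $x_0$ and the infimum in \eqref{h} would not define a stopping height at all). Hence a point $y=(y_0,y')$ on the lateral boundary satisfies $y_0=x_0+a^{-1}|x'-y'|$, not $a(x_0-y_0)=|x'-y'|$ as in your ``boundary of the downward cone.'' This propagates: the inequality you discarded as ``too weak,'' namely $h_{\nu,a}(w)(y')\le h_{\nu,a}(w)(x')+a^{-1}|x'-y'|=x_0+a^{-1}|x'-y'|$, is exactly the needed upper bound $h_{\nu,a}(w)(y')\le y_0$, while your derivation of ``$y_0\le h_{\nu,a}(w)(y')$'' from (i) rests on the false identity $y_0=x_0-a^{-1}|x'-y'|$. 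Moreover, in your summary you attribute $h_{\nu,a}(w)(y')\le y_0$ to the fact that $w$ is close to $\nu$ near $y$; that fact can only show that heights $t<y_0$ are inadmissible (since $y\in\Gamma_a(t,y')$ for every $t<y_0$ and $w(y)=\nu$), i.e.\ it proves the lower bound $h_{\nu,a}(w)(y')\ge y_0$, never the upper one. So, as written, both of your arguments yield $h_{\nu,a}(w)(y')\ge y_0$ and nothing yields the reverse. The correct pairing is: part (i) gives $h_{\nu,a}(w)(y')\le y_0$, and $w(y)=\nu$ together with $y\in\Gamma_a(t,y')$ for all $t<y_0$ gives $h_{\nu,a}(w)(y')\ge y_0$. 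With that correction (and the strictness remark above) your outline becomes a complete proof along the same lines as the argument in \cite{DHM} to which the paper defers.
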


We also have (as in \cite{DHM}) by an identical argument:

\begin{lemma}\label{l6} 
Let $v,\,w$ be as above.
For any $a>0$ there exists $b=b(a)>a$ and $\gamma=\gamma(a)>0$ such that the following holds. 
Having fixed an arbitrary $\nu>0$, for each point $x'$ from the set 
\begin{equation}\label{Eqqq-17}
\big\{x':\,N_{a}(w)(x')>\nu\mbox{ and }S_{b}(v)(x')\leq\gamma\nu\big\}
\end{equation}
there exists a boundary ball $R$ with $x'\in 2R$ and such that
\begin{equation}\label{Eqqq-18}
\big|w\big(h_{\nu,a}(w)(z'),z'\big)\big|>\nu/{2}\,\,\text{ for all }\,\,z'\in R.
\end{equation}
Here $S_b=S_{2,b}$ is the usual square function of $v=\nabla u$ associated with nontangential cones $\Gamma_b(.)$.
\end{lemma}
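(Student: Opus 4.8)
The plan is to run the standard good-$\lambda$/stopping-region argument, but replacing the single stopping-time graph of the real scalar case with the Lipschitz graph $\Gamma_h := \{(h_{\nu,a}(w)(z'),z'):z'\in\partial\Omega\}$ already introduced. Fix $x'$ in the set \eqref{Eqqq-17}, and write $x_0 := h_{\nu,a}(w)(x')$; by Lemma \ref{S3:L5}(ii) there is a point $y=(y_0,y')$ on $\partial\Gamma_a(x_0,x')$ with $w(y)=\nu$ and $h_{\nu,a}(w)(y')=y_0$. I would let $R$ be the boundary ball centered at $y'$ of radius comparable to $y_0$ (this forces $x'\in 2R$ once $b$ is chosen large enough relative to $a$, since $|x'-y'|\lesssim_a y_0$ by the cone geometry). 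The claim \eqref{Eqqq-18} is then a ``continuity of $w$ along the graph'' statement: $w$ equals $\nu$ at the single point $y$ on $\Gamma_h$, and I must show it stays above $\nu/2$ on the portion of $\Gamma_h$ over $R$.

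The mechanism is a telescoping estimate along the graph. For $z'\in R$, write $z_0 := h_{\nu,a}(w)(z')$ and connect the points $(z_0,z')$ and $(y_0,y')=(h_{\nu,a}(w)(y'),y')$ on $\Gamma_h$ by a chain of interior balls $B_j$, each with radius comparable to its distance to the boundary and with consecutive balls overlapping; the number of balls is controlled because $R$ has radius $\sim y_0$ and the graph is Lipschitz with constant $1/a$, so the whole relevant portion of $\Gamma_h$ stays at height $\sim y_0$ and stays inside a fixed dilate of a Carleson region $T(cR)$. On each such ball I estimate the oscillation of $w$ by the local $L^2$ norm of $\nabla v$ with the Poincaré-type inequality: $\osc_{B_j} w \lesssim (\int_{2B_j}|\nabla v|^2\,\delta^{2-n}\,dx)^{1/2}$ (using that $w$ is an $L^2$ average of $v$ and $v$ solves \eqref{eq-partial-z}, so the derivative of the average is controlled by the average of $\nabla v$). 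Summing over the chain and using Cauchy–Schwarz in the number of balls, the total oscillation from $(y_0,y')$ to $(z_0,z')$ is bounded by $C_a\,(\int_{\Gamma_b^{h'}(y')}|\nabla v|^2\delta^{2-n}\,dx)^{1/2} \le C_a\,S_b(v)(y')$, provided $b=b(a)$ is chosen large enough that all the chained balls lie in a single cone $\Gamma_b(y')$. Since $x'\in 2R$ and $x_0=h_{\nu,a}(w)(x')$ is comparable to $y_0$, and since the set in \eqref{Eqqq-17} gives $S_b(v)(\cdot)\le\gamma\nu$ at the relevant base points (here I would arrange $R$ small enough, or invoke that $S_b(v)$ is comparable at nearby boundary points up to a Carleson error — this is where one uses that $S_b(v)$ at $y'$ is controlled by $S_b(v)$ at $x'$ plus a term handled by Theorem \ref{T:Car} with small Carleson norm), the total oscillation is at most $C_a\gamma\nu$. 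Choosing $\gamma=\gamma(a)$ so that $C_a\gamma<1/2$ yields $|w(h_{\nu,a}(w)(z'),z')| > \nu - C_a\gamma\nu > \nu/2$ for all $z'\in R$, which is \eqref{Eqqq-18}.

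The main obstacle is the bookkeeping that makes the chaining argument quantitative and uniform in $\nu$: one must verify that the portion of the graph $\Gamma_h$ over a ball $R$ of radius $\sim y_0$ genuinely stays at comparable height (this is exactly Lemma \ref{S3:L5}(i), the $1/a$-Lipschitz bound), that the interior balls $B_j$ used for the Poincaré estimates can be taken with bounded overlap and all contained in a fixed cone $\Gamma_b(y')$ for a single aperture $b=b(a)$, and that the ``$\nabla$ of an $L^2$-average is controlled by the average of $\nabla v$'' step is legitimate for the system \eqref{eq-partial-z} — this last point is where the $p$-ellipticity regularity theory of Proposition \ref{Regularity} enters, guaranteeing $v=\nabla u\in W^{1,2}_{loc}$ so that the oscillation estimates make sense. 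Since the statement asserts the proof is identical to the one in \cite{DHM}, I expect the cleanest write-up to simply set up this chain and cite \cite{DHM} for the combinatorial details, emphasizing only the two modifications: the use of $L^2$-averaged $w$ in place of pointwise values of $u$, and the fact that each component of $v$ solves \eqref{eq-partial-z} rather than the homogeneous equation, so that the Poincaré/Caccioppoli estimates pick up lower-order terms that are absorbed using the Carleson condition on $\nabla A$ and $B$.
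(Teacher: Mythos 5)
Your overall route is the paper's route: the paper offers no separate proof of Lemma \ref{l6}, asserting that the argument of \cite{DHM} applies verbatim, and that argument is essentially your plan --- take the touching point $y=(y_0,y')$ from Lemma \ref{S3:L5}(ii), let $R$ be a boundary ball centered at $y'$ of radius a fixed multiple of $ay_0$, control the oscillation of the averaged function $w$ along the graph of $h_{\nu,a}(w)$ over $R$ by chaining interior Poincar\'e estimates (which need only $v=\nabla u\in W^{1,2}_{loc}$, cf.\ the discussion around \eqref{eq-partial-z}; no use of the equation or Carleson absorption is actually required in this lemma), and then choose $\gamma=\gamma(a)$ small.

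One step as you wrote it would fail, though the fix is short: you bound the total oscillation by $S_b(v)(y')$ and then propose to pass from $y'$ to $x'$ either because ``$x_0$ is comparable to $y_0$'' or by a ``Carleson error'' via Theorem \ref{T:Car}. Neither works: $x_0=h_{\nu,a}(w)(x')$ need not be comparable to $y_0$ (only $y_0\ge x_0$, hence $|x'-y'|=a(y_0-x_0)\le ay_0$, is available), the hypothesis \eqref{Eqqq-17} controls the square function only at the given point $x'$, and Theorem \ref{T:Car} gives integrated Carleson-measure estimates, not pointwise comparability of $S_b(v)$ at two distinct boundary points. The correct move --- and the reason the lemma allows $b=b(a)>a$ --- is to anchor the cone at $x'$ from the start: taking the radius of $R$ to be, say, $ay_0/2$ guarantees simultaneously that $x'\in 2R$ and that the graph over $R$ stays at heights in $[y_0/2,\,3y_0/2]$, so every ball in your chain sits at height $\approx y_0$ and at horizontal distance at most $C(a)\,y_0$ from $x'$, hence inside $\Gamma_b(x')$ once $b$ is large depending only on $a$. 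Cauchy--Schwarz over the boundedly many (depending only on $a$) chain balls then bounds the total oscillation by $C(a)\,S_b(v)(x')\le C(a)\gamma\nu$, and $\gamma<1/(2C(a))$ yields \eqref{Eqqq-18}; no transfer of square-function bounds between boundary points is needed. With that adjustment your write-up coincides with the cited argument of \cite{DHM}.
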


Given a Lipschitz function $h:{\mathbb{R}}^{n-1}\to{\mathbb{R}}$, denote by $M_h$ the 
Hardy-Littlewood maximal function considered on the graph of $h$. That is, 
given any locally integrable function $f$ on the Lipschitz surface 
$\Lambda_h=\{(h(z'),z'):\,z'\in\BBR^{n-1}\}$, define 
$(M_h f)(x):=\sup_{r>0}\dint_{\Lambda_h\cap B_r(x)}|f|\,d\sigma$ for each $x\in\Lambda_h$. 

\begin{corollary}\label{S3:L6} 
Let $v,w$ be defined as above and let $a>0$ be fixed. 
Associated with these, let $b,\,\gamma$ be as in Lemma~\ref{l6}. Then there exists a finite 
constant $C=C(n,p)>0$ with the property that for any $\nu>0$ and any point $x'\in E_{\nu,a}$ 
such that $S_{b}(v)(x')\leq\gamma\nu$ one has
\begin{equation}\label{Eqqq-23}
(M_{h_{\nu,a}}w)\big(h_{\nu,a}(x'),x'\big)\geq\,C\nu.
\end{equation}
\end{corollary}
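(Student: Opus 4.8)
The plan is to combine Lemma~\ref{l6} with a ball-size comparison and the definition of the Hardy--Littlewood maximal function on the graph $\Lambda_{h_{\nu,a}}$. Fix $\nu>0$ and a point $x'\in E_{\nu,a}$ with $S_b(v)(x')\le\gamma\nu$. By Lemma~\ref{l6} there is a boundary ball $R\subset\partial\Omega$ with $x'\in 2R$ such that $|w(h_{\nu,a}(w)(z'),z')|>\nu/2$ for every $z'\in R$. Let $r$ denote the radius of $R$ and let $x'_R$ be its center, so that $|x'-x'_R|<2r$.

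The first step is to pass from the boundary ball $R$ to a surface ball on the graph. Since $h_{\nu,a}(w)$ is Lipschitz with constant $1/a$ (Lemma~\ref{S3:L5}(i)), the portion of the graph $\Lambda_{h_{\nu,a}}$ lying over $R$ is contained in a surface ball $B_\rho(P)\cap\Lambda_{h_{\nu,a}}$ where $P=(h_{\nu,a}(w)(x'_R),x'_R)$ and $\rho\le Cr$ with $C$ depending only on $a$ (and $n$); conversely the surface measure of $\Lambda_{h_{\nu,a}}$ over $R$ is comparable to $\sigma(R)\approx r^{n-1}$, again with constants depending only on $n$ and $a$. The second step is to note that the point $Q:=(h_{\nu,a}(w)(x'),x')$ on the graph is within controlled distance of $P$: indeed $|x'-x'_R|<2r$ and the vertical coordinates differ by at most $a^{-1}|x'-x'_R|<2r/a$, so $Q\in B_{\rho'}(P)$ with $\rho'\le C(n,a)r$. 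Enlarging the radius if necessary, we get a single surface ball $B_{R_0}(Q)\cap\Lambda_{h_{\nu,a}}$, with $R_0\le C(n,a)r$, that contains the graph over $R$.

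The third step is the averaging estimate. Using that $|w|>\nu/2$ on the graph over $R$, and that this piece of graph has surface measure $\gtrsim r^{n-1}$ while the surface ball $B_{R_0}(Q)\cap\Lambda_{h_{\nu,a}}$ has surface measure $\lesssim R_0^{n-1}\lesssim r^{n-1}$ (all constants depending only on $n$ and $a$, hence ultimately on $n$ and $p$ once $a$ is fixed in terms of $p$), we obtain
\[
\dint_{\Lambda_{h_{\nu,a}}\cap B_{R_0}(Q)}|w|\,d\sigma
\;\ge\;\frac{1}{\sigma(\Lambda_{h_{\nu,a}}\cap B_{R_0}(Q))}\int_{\text{graph over }R}|w|\,d\sigma
\;\ge\;\frac{\nu}{2}\cdot\frac{\sigma(\text{graph over }R)}{\sigma(\Lambda_{h_{\nu,a}}\cap B_{R_0}(Q))}
\;\ge\;C\nu.
\]
Since this average is one of the quantities over which the supremum defining $(M_{h_{\nu,a}}w)(Q)$ is taken, we conclude $(M_{h_{\nu,a}}w)(h_{\nu,a}(x'),x')\ge C\nu$, which is \eqref{Eqqq-23}.

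The only mildly delicate point is the geometric bookkeeping in steps one and two: making sure the radius $R_0$ of the graph ball centered at $Q$ that swallows the graph over $R$ is comparable to $r$ with constants independent of $\nu$, $x'$, and $u$, and that the surface measures on the Lipschitz graph comparable to Euclidean measures on $\partial\Omega$ with constants depending only on the Lipschitz constant $1/a$. None of this is hard, but it is where all constants must be tracked; everything else is just the definition of $M_{h_{\nu,a}}$ and the lower bound from Lemma~\ref{l6}.
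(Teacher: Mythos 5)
Your proof is correct and is essentially the intended argument: the paper states this as an immediate corollary of Lemma~\ref{l6} (following \cite{DHM}) without writing out the details, and the details are exactly what you supply — use the ball $R$ from Lemma~\ref{l6}, note via the $1/a$-Lipschitz bound on $h_{\nu,a}$ that the graph over $R$ sits inside a surface ball centered at $(h_{\nu,a}(x'),x')$ of radius comparable to that of $R$, and compare surface measures to lower-bound the average by $C\nu$. The constant-tracking you flag (dependence only on $n$ and the aperture $a$, hence on $n,p$) is handled correctly, so nothing is missing.
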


The following lemma requires a modified proof which we include below.

\begin{lemma}\label{S3:L8} 
Consider the equation ${\mathcal Lu}=0$ with coefficients satisfying assumptions of Theorem~\ref{S3:T1}, let $v=\nabla u$ and let $w$ be defined by \eqref{w_new}. 
Then there exists $a>0$ with the following significance.  Select $\theta\in[1/6,6]$ and, having picked $\nu>0$ arbitrary,  
let $h_{\nu,a}(w)$ be as in \eqref{h}. Also, consider the domain 
$\mathcal{O}=\{(x_0,x')\in\Omega:\,x_0>\theta h_{\nu,a}(x')\}$ with boundary  
$\partial\mathcal{O}=\{(x_0,x')\in\Omega:\,x_0=\theta h_{\nu,a}(x')\}$. In this context, 
for any surface ball $\Delta_r=B_r(Q)\cap\partial\Omega$, with $Q\in\partial\Omega$ and $r>0$ 
chosen such that $h_{\nu,a}(w)\leq 2r$ pointwise on $\Delta_{2r}$, 
one has
\begin{align}\label{TTBBMM}
\int_{\Delta_r}\big|v\big(\theta h_{\nu,a}(w)(\cdot),\cdot\big)\big|^2\,dx' 
&\leq C(1+\|\mu\|^{1/2}_{\mathcal C})\|S_{b}(v)\|_{L^p(\Delta_{2r})}
\|{N}_{2,a}(w)\|_{L^p(\Delta_{2r})}
\nonumber\\
&\hskip-3cm +C\|\mu\|_{\mathcal C}^{1/2}\|{N}_{2,a}(w)\|^2_{L^p(\Delta_{2r})}+C\|S_{b}(v)\|^2_{L^p(\Delta_{2r})}+\frac{c}{r}\iint_{\mathcal{K}}|v|^{2}\,dX.
\end{align}
Here $C=C(\Lambda,p,n)\in(0,\infty)$ and $\mathcal{K}$ is a region inside $\mathcal{O}$ of diameter, 
distance to the boundary $\partial\mathcal{O}$, and distance to $Q$, are all comparable to $r$. 
Also, the parameter $b>a$ is as in Lemma~\ref{l6}, and the cones used to define the square and nontangential 
maximal functions in this lemma have vertices on $\partial\Omega$.

Moreover, the term $\displaystyle\iint_{\mathcal{K}}|v|^2\,dX$ appearing 
in \eqref{TTBBMM} may be replaced by the quantity
\begin{equation}\label{Eqqq-25}
Cr^{n-1}|\tilde{v}(A_r)|^2+C\int_{\Delta_{2r}}S^2_{b}(v)\,d\sigma,
\end{equation}
where $A_r$ is any point inside $\mathcal{K}$ (usually called a corkscrew point of $\Delta_r$) and
\begin{equation}\label{Eqqq-26}
\tilde{v}(X):=\dint_{B_{\delta(X)/2}(X)}v(Z)\,dZ.
\end{equation}
Finally, \eqref{TTBBMM} and \eqref{Eqqq-25} remains true even if $v$ is replaced by $v-v_0$ for any fixed $v_0\in{\mathbb C}^n$.
\end{lemma}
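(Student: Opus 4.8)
\textbf{Proof proposal for Lemma~\ref{S3:L8}.}
The plan is to derive \eqref{TTBBMM} by integrating the PDE ${\mathcal L}u=0$ against a suitable vector field (a Rellich--Ne\v{c}as--Payne--Weinberger type identity) over the subdomain $\mathcal{O}$, exactly as in the corresponding argument of \cite{DHM} for elliptic systems, and then carefully accounting for the extra terms produced by the variable coefficients and the drift $B$. First I would fix the aperture $a$ so that the cones $\Gamma_a$ at points of $\partial\Omega$ stay inside $\mathcal{O}$ for all $\theta\in[1/6,6]$ (possible because $h_{\nu,a}(w)$ is Lipschitz with constant $1/a$ by Lemma~\ref{S3:L5}(i) and the Lipschitz constant $L$ is small); the graph $\partial\mathcal{O}=\{x_0=\theta h_{\nu,a}(x')\}$ is then a Lipschitz graph with controlled constant. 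Working component-wise on $v_k=\partial_k u$, each $v_k$ solves $\partial_i(A_{ij}\partial_j v_k)=\partial_i((\partial_k A_{ij})v_j)-\partial_k(B_i v_i)$ as recorded in \eqref{eq-partial-z} and the surrounding discussion, so $v$ solves an elliptic system whose right-hand side is a divergence of an $L^2_{loc}$ vector field plus a term controlled by $|\nabla A|+|B|$ times $|v|$.

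The core computation is the following. Choose a smooth vector field $\beta$ that is a regularized version of the unit conormal (or simply $e_n=\partial_{x_0}$-direction modified near $\partial\mathcal{O}$), multiply the system for $v$ by $\beta\cdot\nabla v$ (really $\mathscr{R}e\,\langle\, \cdot\,, \overline{\beta\cdot\nabla v}\rangle$ since we are in the complex setting), and integrate by parts over $\mathcal{O}\cap T(\Delta_{2r})$. The boundary integral over $\partial\mathcal{O}\cap\Delta_r$ produces, up to ellipticity constants, $\int_{\Delta_r}|v(\theta h_{\nu,a}(\cdot),\cdot)|^2$ — here $2$-ellipticity (uniform ellipticity of $A$) is what makes the boundary quadratic form coercive, and this is why the statement is phrased with the ordinary $L^2$ square function $S_b=S_{2,b}$ rather than the $p$-adapted one. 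The interior terms split into: (a) a ``good'' solid integral $\iint_{\mathcal{O}\cap T(\Delta_{2r})}|\nabla v|^2\,\delta\,dX$ type quantity which, after a Fubini/averaging argument as in \eqref{SSS-1}, is dominated by $\|S_b(v)\|_{L^p(\Delta_{2r})}^2$ on the region where $\delta\lesssim r$; (b) error terms carrying one factor of $|\nabla A|+|B|$ and hence, via \eqref{CarA} and Theorem~\ref{T:Car} (specifically the localized bound \eqref{Ca-222-x} on the subdomain $\Omega_h$), contribute $\|\mu\|_{\mathcal C}^{1/2}\|N_{2,a}(w)\|_{L^p(\Delta_{2r})}$ against either $\|S_b(v)\|_{L^p}$ or $\|N_{2,a}(w)\|_{L^p}$, by Cauchy--Schwarz; and (c) the ``far'' contributions from $\partial(T(\Delta_{2r}))\cap\mathcal{O}$ at distance $\sim r$, which give the $\frac{c}{r}\iint_{\mathcal K}|v|^2$ term since on $\mathcal K$ one has $\delta(X)\sim r$. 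Matching each term to the right-hand side of \eqref{TTBBMM} is then bookkeeping; the constant $C$ depends only on $\Lambda$ (the upper bound), the lower ellipticity constant, $n$ and $p$.

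For the ``moreover'' part, I would replace $\frac{c}{r}\iint_{\mathcal K}|v|^2\,dX$ by \eqref{Eqqq-25} using a standard Caccioppoli-plus-telescoping argument on chains of balls inside $\mathcal{O}$: writing $v=\tilde v(A_r)+(v-\tilde v(A_r))$, the constant piece integrates to $Cr^{n-1}|\tilde v(A_r)|^2$ after dividing by $r$, while the oscillation piece is controlled, via a Poincar\'e inequality along a Harnack chain connecting points of $\mathcal K$ to $A_r$ together with \eqref{SSS-1}/\eqref{SSS-2} applied on the enlarged region, by $C\int_{\Delta_{2r}}S_b^2(v)\,d\sigma$. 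The final sentence (invariance under $v\mapsto v-v_0$) is immediate: $v-v_0$ satisfies the same system as $v$ (the right-hand side of \eqref{eq-partial-z} only sees $\nabla v$ and the lower-order term $(\partial_k A_{ij})v_j$, but after the reduction $A_{00}=1$, $\mathscr{I}m\,A_{0j}=0$ one checks the constant shift only adds terms already absorbed into the $\|\mu\|_{\mathcal C}$-errors), and every term on the right-hand side of \eqref{TTBBMM} and \eqref{Eqqq-25} is translation-invariant in the target variable.

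\textbf{Main obstacle.} The delicate point is handling the error terms in step (b) without losing the smallness: one must arrange that every term carrying $|\nabla A|+|B|$ is genuinely multiplied by $\|\mu\|_{\mathcal C}^{1/2}$ (not just $\|\mu\|_{\mathcal C}^{0}$) and is paired against $N_{2,a}(w)$ rather than $N_{2,a}(v)$ pointwise — this is exactly where the $L^2$-averaged maximal function $w$ (and the localized Carleson estimate \eqref{Ca-222-x} on $\Omega_h$) is essential, since pointwise values of $v$ are unavailable in the complex setting. A second subtlety is that the vector field $\beta$ must be chosen so that its divergence and the commutator $[\beta\cdot\nabla, \partial_i]$ do not generate terms worse than $\delta^{-1}|v|\,|\nabla v|$, so that after Cauchy--Schwarz with weight $\delta$ they land on the square-function side; this is where one uses that $\partial\mathcal{O}$ is a Lipschitz graph with small constant and that the coefficients are $\mathrm{Lip}_{loc}$ with the Carleson bound \eqref{Bcond} on $|\nabla A|$.
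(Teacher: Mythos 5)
There is a genuine gap, and it sits exactly at the step you flag as the heart of your argument. You propose to obtain the boundary integral $\int_{\Delta_r}|v(\theta h_{\nu,a}(\cdot),\cdot)|^2$ from a Rellich--Ne\v{c}as--Payne--Weinberger identity, asserting that ``$2$-ellipticity is what makes the boundary quadratic form coercive.'' For complex, non-Hermitian matrices $A$ this is false: the Rellich mechanism requires writing $\mathscr{R}e\,\big(A_{ij}\partial_j u\,\partial_i(\overline{\beta\cdot\nabla u})\big)$ as one half of a total transversal derivative of $\mathscr{R}e\,\langle A\nabla u,\overline{\nabla u}\rangle$ plus controlled terms, which uses $A_{ij}=\overline{A_{ji}}$ (real symmetry in the classical case); without it the boundary form is not sign-definite and cannot be bounded below by $\int_{\partial\mathcal{O}}|\nabla u|^2$, no matter how small the Lipschitz constant of $\partial\mathcal{O}$ or the Carleson norm is. (If instead you literally pair the system \eqref{eq-partial-z} for $v$ with $\beta\cdot\nabla\bar v$, the boundary term is quadratic in $\nabla v$, i.e. in second derivatives of $u$, and does not produce the left-hand side of \eqref{TTBBMM} at all.) Avoiding precisely this Rellich step is the reason the lemma is proved differently in the paper: after flattening $\mathcal{O}$ by the pullback \eqref{E:rho}, one writes $\int|\tilde v_k|^2(0,\cdot)\zeta=-\iint\partial_0\big(|\tilde v_k|^2\zeta\big)$, integrates by parts once more in $x_0$ to create the weight $x_0$ (see \eqref{u6tg}--\eqref{utAA}), and for the tangential components uses only the commutation $\partial^2_{00}\tilde v_k=\partial_k\partial_0\tilde v_0$ -- no coercive boundary form is ever needed.

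A second, related omission is the transversal derivative. Your sketch never explains how $|\partial_0 u|^2$ on $\partial\mathcal{O}$ is recovered; in the paper this is the step where the equation and the structural normalization $A_{00}=1$, $\mathscr{I}m\,A_{0j}=0$ enter: one controls $\tilde v_0$ through the conormal flux $H=\sum_j\tilde A_{0j}\tilde v_j$ (see \eqref{u6tg-otoh}), uses $\partial_0H=-\sum_{i>0}\partial_i(\tilde A_{ij}\tilde v_j)$ from $\tilde{\mathcal L}\tilde u=0$, and integrates the resulting tangential derivatives by parts \eqref{u6tg-x}--\eqref{u6tg-xx}; the terms where derivatives fall on $\tilde A$ are the ones producing the $\|\mu\|_{\mathcal C}^{1/2}$ and $\|\mu\|_{\mathcal C}$ factors via Cauchy--Schwarz and Theorem \ref{T:Car}. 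Your items (b), (c), the choice of large aperture $a$ so that the graph $\theta h_{\nu,a}$ has small Lipschitz constant, and the Poincar\'e-based replacement of $\frac{c}{r}\iint_{\mathcal K}|v|^2$ by \eqref{Eqqq-25} are all in line with the paper, but they rest on the unavailable coercivity claim, so as written the proof does not go through.
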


\begin{proof} 
Fix $\theta\in [1/6,6]$. Consider the well-known pullback transformation $\rho:\BBR^{n}_{+}\to\mathcal{O}$ 
appearing in works of Dahlberg, Ne\v{c}as, 
Kenig-Stein and others, defined by
\begin{equation}\label{E:rho}
\rho(x_0, x'):=\big(x_0+P_{\gamma x_0}\ast\phi(x'),x'\big),
\qquad\forall\,(x_0,x')\in\mathbb{R}^{n}_{+},
\end{equation}
for some positive constant $\gamma$. Here $\phi$ is a Lipschitz function describing boundary on $\partial\mathcal O$, $P$ is a nonnegative function 
$P\in C_{0}^{\infty}(\mathbb{R}^{n-1})$ and, for each $\lambda>0$,  
\begin{equation}\label{PPP-1a}
P_{\lambda}(x'):=\lambda^{-n+1}P(x'/\lambda),\qquad\forall\,x'\in{\mathbb{R}}^{n-1}.
\end{equation}
Finally, $P_{\lambda}\ast\phi(x')$ is the convolution
\begin{equation}\label{PPP-lambda}
P_{\lambda}\ast\phi(x'):=\int_{\mathbb{R}^{n-1}}P_{\lambda}(x'-y')\phi(y')\,dy'. 
\end{equation}
Observe that $\rho$ extends up to the boundary of ${\BBR}^{n}_{+}$ and maps one-to-one from 
$\partial {\BBR}^{n}_{+}$ onto $\partial\mathcal O$. Also for sufficiently small $\gamma\lesssim L$ 
the map $\rho$ is a bijection from $\overline{\mathbb{R}^{n}_{+} }$ onto $\overline{\mathcal O}$ 
and, hence, invertible. 

For a solution $u\in W^{1,2}_{loc}(\mathcal O;\BBC)$ to $\mathcal{L}u=0$ in $\mathcal O$ with Dirichlet 
datum $f$, consider $\tilde{u}:=u\circ\rho$ and $\tilde{f}:=f\circ\rho$. The change of variables 
via the map $\rho$ just described implies that $\tilde{u}\in W^{1,2}_{loc}(\mathbb{R}^{n}_{+};{\BBC})$ 
solves a new elliptic PDE of the form
\begin{equation}\label{ESv}
\partial_{i}\left(\tilde{A}_{ij}(x)\partial_{j}{\tilde{u}}\right)
=0,
\end{equation}
with boundary datum $\tilde{f}$ on $\partial \mathbb{R}^{n}_{+}$. Hence, solving a boundary value 
problem for $u$ in $\Omega$ is equivalent to solving a related boundary value problem for $\tilde{u}$ in 
$\mathbb{R}^{n}_{+}$. Crucially, if the coefficients of the original system are such that \eqref{CarA} 
is a Carleson measure, then the coefficients of $\tilde{A}$ satisfy an analogous 
Carleson condition in the upper-half space. If, in addition, the Carleson norm of \eqref{CarA} 
is small and $L$ (the Lipschitz constant for the domain $\Omega$) is also small, then the Carleson 
norm for the new coefficients $\tilde{A}$ 
\begin{equation}\label{CarbarA}
d\tilde{\mu}(x)=\left(\sup_{B_{\delta(x)/2}(x)}|\nabla\tilde{A}|\right)^{2}\delta(x) \,dx
\end{equation}
will be correspondingly small ans will only depends on the Carleson norm of the original coefficients and the Lipschitz norm of the function $h_{\nu,a}$. When the Lipschitz norm of this function goes to zero we have 
$$\limsup \|\tilde\mu\|_{\mathcal{C}}\le \|\mu\|_{\mathcal{C}}$$
and hence the parameter $a>0$ may be chosen 
large enough so that the Lipschitz norm of the function $\theta h_{\nu,a}$ is sufficiently small (at most $6/a$)
such that $\|\tilde\mu\|_{\mathcal{C}}\le 2\|\mu\|_{\mathcal{C}}$.  
Moreover, this transformation also preserves ellipticity.

\vskip1mm

Having fixed a scale $r>0$, we localize to a ball $B_r(y')$ in $\BBR^{n-1}$. 
Let $\zeta$ be a smooth cutoff function of the form $\zeta(x_0, x')=\zeta_{0}(x_0)\zeta_{1}(x')$ where
\begin{equation}\label{Eqqq-27}
\zeta_{0}= 
\begin{cases}
1 & \text{ in } [0, r], 
\\
0 & \text{ in } [2r, \infty),
\end{cases}
\qquad
\zeta_{1}= 
\begin{cases}
1 & \text{ in } B_{r}(y'), 
\\
0 & \text{ in } \mathbb{R}^{n}\setminus B_{2r}(y')
\end{cases}
\end{equation}
and
\begin{equation}\label{Eqqq-28}
r|\partial_{0}\zeta_{0}|+r|\nabla_{x'}\zeta_{1}|\leq c
\end{equation}
for some constant $c\in(0,\infty)$ independent of $r$.
Our goal is to control the $L^2$ norm of $\nabla u\big(\theta h_{\nu,a}(w)(\cdot),\cdot\big)$.  
Since after the pullback under the mapping $\rho$ the latter is comparable with the $L^2$ norm 
of $\nabla \tilde{u}(0,\cdot)$, we proceed to estimate this quantity.

Clearly, if we establish estimate \eqref{TTBBMM} for $\nabla \tilde{u}$ on $\Delta_r\subset {\partial{\mathbb R}^n_+}$ it would imply the original estimate for $\nabla u$ on the graph of $\theta h_{\nu,a}$.

Hence, let $\tilde{v}=\nabla\tilde{u}$. For $\tilde{v}_k$ where $k=1,2,\dots,n-1$ we have
\begin{align}\label{u6tg}
&\hskip -0.20in
\int_{B_{2r}(y')}|\tilde{v}_k|^{2}(0,x')\zeta(0,x')\,dx' 
\nonumber\\[4pt]
&\hskip 0.70in
=-\iint_{[0,2r]\times B_{2r}(y')}\partial_{0}\left[|\tilde{v}_k|^{2}\zeta\right](x_0,x')\,dx_0\,dx' 
\nonumber\\[4pt]
&\hskip 0.70in
=-2\iint_{[0,2r]\times B_{2r}(y')}\mathscr{R}e\,\langle \tilde{v}_k,\partial_0 \tilde{v}_k\rangle\zeta\,dx_0\,dx'  
\nonumber\\[4pt]
&\hskip 0.70in
\quad-\iint_{[0,2r]\times B_{2r}(y')}|\tilde{v}_k|^{2}(x_0,x')\partial_{0}\zeta\,dx_0\,dx'
\nonumber\\[4pt]
&\hskip 0.70in
=:\mathcal{A}+IV.
\end{align}
We further expand the term $\mathcal A$ as a sum of three terms obtained 
via integration by parts with respect to $x_0$ as follows:
\begin{align}\label{utAA}
\mathcal A &=-2\iint_{[0,2r]\times B_{2r}(y')}\mathscr{R}e\,\langle \tilde{v}_k,\partial_0 \tilde{v}_k\rangle\zeta(\partial_0x_0)\,dx_0\,dx'
\nonumber\\[4pt]
&=\quad 2\iint_{[0,2r]\times B_{2r}(y')}\left|\partial_{0}\tilde{v}_k\right|^{2}x_0\zeta\,dx_0\,dx' 
\nonumber\\[4pt]
&\quad +2\iint_{[0,2r]\times B_{2r}(y')}\mathscr{R}e\,\langle \tilde{v}_k,\partial^2_{00} \tilde{v}_k\rangle x_0\zeta\,dx_0\,dx' 
\nonumber\\[4pt]
&\quad +2\iint_{[0,2r]\times B_{2r}(y')}\mathscr{R}e\,\langle \tilde{v}_k,\partial_0 \tilde{v}_k\rangle x_0\partial_{0}\zeta\,dx_0\,dx' 
\nonumber\\[4pt]
&=:I+II+III.
\end{align}

We start by analyzing the term $II$. We write $\partial^2_0\tilde{v}_k=\partial_k\partial_0\tilde{v}_0$ and integrate by parts moving the $\partial_k$ derivative. This gives
\begin{align}\label{utAAA}
II &=2\iint_{[0,2r]\times B_{2r}(y')}\mathscr{R}e\,\langle \tilde{v}_k,\partial_k\partial_0\tilde{v}_0\rangle x_0\zeta\,dx_0\,dx' 
\nonumber\\[4pt]
&=-2\iint_{[0,2r]\times B_{2r}(y')}\mathscr{R}e\,\langle \partial_k\tilde{v}_k,\partial_0\tilde{v}_0\rangle x_0\zeta\,dx_0\,dx' 
\nonumber\\[4pt]
&\quad -2\iint_{[0,2r]\times B_{2r}(y')}\mathscr{R}e\,\langle \tilde{v}_0,\partial_k \tilde{v}_k\rangle x_0\partial_{0}\zeta\,dx_0\,dx' 
\nonumber\\[4pt]
&=II_1+II_2.
\end{align}

We now group together terms that are of the same type. Firstly, we have 
\begin{equation}\label{Eqqq-29}
I+II_1\leq C(\Lambda,n)\|S_{b}(v)\|^2_{L^2(B_{2r})}.
\end{equation}
Here,  the estimate would be true even with truncated square function $\|S^{2r}_{b}(\tilde{v})\|^2_{L^2(B_{2r})} $ which is at every point
dominated by $\|S_{b}(v)\|^2_{L^2(B_{2r})}$.

Next, corresponding to the case when the derivative falls on the cutoff function $\zeta$ we have
\begin{align}\label{TDWW}
II_2+III &\leq C(\Lambda,n)\iint_{[0,2r]\times B_{2r}}\left|\nabla \tilde{v}\right||\tilde{v}|\frac{x_0}{r}\,dx_0\,dx' 
\nonumber\\[4pt]
&\leq C(\Lambda,n)\left(\iint_{[0,2r]\times B_{2r}}|\tilde{v}|^{2}\frac{x_0}{r^{2}}\,dx_0\,dx'\right)^{1/2} 
\|S^{2r}_{b}(\tilde{v})\|_{L^2(B_{2r})} 
\nonumber\\[4pt]
&\leq C(\Lambda,n)\|S_{b}(v)\|^{p/2}_{L^p(B_{2r})}\|{N}_{p,a}(w)\|_{L^2(B_{2r})}.
\end{align}
Finally, the interior term $V$, which arises from the fact that $\partial_{0}\zeta$ vanishes on the set
$(0,r)\cup(2r,\infty)$ may be estimated as follows:
\begin{equation}\label{Eqqq-31}
IV\leq\frac{c}{r}\iint_{[r,2r]\times B_{2r}}|v|^{2}\,dx_0\,dx'.
\end{equation}
Summing up all terms, the above analysis ultimately yields
\begin{align}\label{E1:uonh}
&\hskip -0.20in \int_{B_{r}(y')}|\nabla_T\tilde{u}(0,x')|^2\,dx' 
\nonumber\\[4pt]
&\hskip 0.40in 
\leq C(\Lambda,n)(1+\|\mu\|^{1/2}_{\mathcal C}) 
\|S_{b}(v)\|_{L^p(B_{2r})}\|{N}_a(w)\|_{L^p(B_{2r})}
\nonumber\\[4pt]
&\hskip 0.40in 
\quad+C(\Lambda,n)\|S_{b}(v)\|^2_{L^p(B_{2r})}
+\frac{c}{r}\iint_{[r,2r]\times B_{2r}}|v|^2\,dx_0\,dx'.
\end{align}
Observe also we could have done the whole calculation with a constant subtracted off $\tilde{v}_k$ without any substantial modifications. It remains to consider derivative in a transversal direction to the boundary.
Instead of $\tilde{v}_0=\partial_0\tilde{u}$ it is more convenient to work with
$$H=\sum_{j=0}^{n-1}\tilde{A}_{0j}\tilde{v}_j,$$
which will give us desired bound since
\begin{align}\label{u6tg-otoh}
&
\int_{B_{2r}(y')}|\tilde{v}_0|^{2}(0,x')\zeta(0,x')\,dx' \approx \int_{B_{2r}(y')}|\tilde{A}_{00}\tilde{v}_0(0,x')|^{2}\zeta(0,x')\,dx' 
\nonumber\\
&
\le n\left[\int_{B_{2r}(y')}|H(0,x')|^{2}\zeta(0,x')\,dx'+\sum_{j>0}\int_{B_{2r}(y')}|\tilde{A}_{0j}\tilde{v}_j(0,x')|^{2}\zeta(0,x')\,dx'  \right]
\nonumber\\
&
\le n\int_{B_{2r}(y')}|H(0,x')|^{2}\zeta(0,x')\,dx'+C(n,\Lambda) \int_{B_{r}(y')}|\nabla_T\tilde{u}(0,x')|^2\,dx' .
\end{align}
The second term is OK as we have \eqref{E1:uonh}. We deal with the first term now. A calculation similar to 
\eqref{u6tg}-\eqref{utAA} gives us
\begin{align}\label{u6tg-x}
&\hskip -0.20in
\int_{B_{2r}(y')}|H|^{2}(0,x')\zeta(0,x')\,dx' 
\nonumber\\[4pt]
&\hskip 0.70in
=-2\iint_{[0,2r]\times B_{2r}(y')}\mathscr{R}e\,\langle H,\partial_0 H\rangle\zeta\,dx_0\,dx' 
\nonumber\\[4pt]
&\hskip 0.70in
\quad-\iint_{[0,2r]\times B_{2r}(y')}|H|^{2}(x_0,x')\partial_{0}\zeta\,dx_0\,dx'.
\end{align}
The second term has a similar estimate as \eqref{Eqqq-31}. For the first term we use the fact that ${\tilde{\mathcal L}\tilde{u}}=0$ which implies that
$$\partial_0 H=-\sum_{i>0}\partial_i(\tilde{A}_{ij}\tilde{v}_j).$$
It follows
\begin{align}\label{u6tg-xx}
&\hskip -0.20in
-2\iint_{[0,2r]\times B_{2r}(y')}\mathscr{R}e\,\langle H,\partial_0 H\rangle\zeta\,dx_0\,dx' 
\nonumber\\[4pt]
&\hskip 0.20in
=2\sum_{i>0}\iint_{[0,2r]\times B_{2r}(y')}\mathscr{R}e\,\langle H,\partial_i (\tilde{A}_{ij}\tilde{v}_j)\rangle\zeta(\partial_0x_0)\,dx_0\,dx' 
\nonumber\\[4pt]
&\hskip 0.20in
=-2\sum_{i>0}\iint_{[0,2r]\times B_{2r}(y')}\mathscr{R}e\,\langle\partial_0 H,\partial_i (\tilde{A}_{ij}\tilde{v}_j)\rangle\zeta x_0\,dx_0\,dx'
\nonumber\\[4pt]
&\hskip 0.20in
\quad+2\sum_{i>0}\iint_{[0,2r]\times B_{2r}(y')}\mathscr{R}e\,\langle\partial_i H,\partial_0 (\tilde{A}_{ij}\tilde{v}_j)\rangle\zeta x_0\,dx_0\,dx'
\nonumber\\[4pt]
&\hskip 0.20in
\quad-2\sum_{i>0}\iint_{[0,2r]\times B_{2r}(y')}\mathscr{R}e\,\langle H,\partial_i (\tilde{A}_{ij}\tilde{v}_j)\rangle(\partial_0\zeta) x_0\,dx_0\,dx'
\nonumber\\[4pt]
&\hskip 0.20in
\quad+2\sum_{i>0}\iint_{[0,2r]\times B_{2r}(y')}\mathscr{R}e\,\langle H,\partial_0 (\tilde{A}_{ij}\tilde{v}_j)\rangle(\partial_i\zeta) x_0\,dx_0\,dx'.
\end{align}

We analyse this term by term. In the last two terms, if the derivative falls on $\tilde{v}_j$ these terms are of the same nature as \eqref{TDWW} and are handled identically. When the derivative falls on the coefficients these are bounded by
$$\iint_{[0,2r]\times B_{2r}(y')}|\tilde{v}|^2|\nabla \tilde{A}|\frac{x_0}r\,dx_0\,dx'\lesssim \|\mu\|^{1/2}_{\mathcal C}\|N_a(w)\|^2_{L^2},$$
where we have used the Cauchy-Schwarz inequality and the Carleson condition. 

The first two terms on the righthand side of \eqref{u6tg-xx} will give us the square function of $\tilde{v}$ when both derivatives fall on $\tilde{v}$ or a mixed term like \eqref{TDWW} or finally when both derivatives hit the coefficients terms bounded from above by
$$\iint_{[0,2r]\times B_{2r}(y')}|\tilde{v}|^2|\nabla \tilde{A}|^2x_0\,dx_0\,dx'\lesssim \|\mu\|_{\mathcal C}\|N_a(w)\|^2_{L^2}.$$

With this in hand, the estimate in \eqref{TTBBMM} follows (by passing from $\tilde{v}$ back to $v=\nabla u$ via the map $\rho$).

Finally, the claim that the term \eqref{Eqqq-25} can be used in the statement of the lemma follows from Poincar\'e inequality. See \cite{DHM} for the details.
\end{proof}

Finally, by using all Lemmas above we can establish the following local good-$\lambda$ inequality. We omit the proof as the argument is the same as in \cite{DHM}.

\begin{lemma}\label{LGL-loc} 
Consider the equation ${\mathcal Lu}=0$ with coefficients satisfying assumptions of Theorem~\ref{S3:T1}. Consider any boundary ball $\Delta_d=\Delta_d(Q)\subset {\mathbb R}^{n-1}$, let $A_d=(d/2,Q)$ be its corkscrew point and let
\begin{equation}
\nu_0=\left(\dint_{B_{d/4}(A_d)}|\nabla u(z)|^2\,dz\right)^{1/2}.
\end{equation}
Then for each $\gamma\in(0,1)$ there exists a constant $C(\gamma)>0$ 
such that $C(\gamma)\to 0$ as $\gamma\to 0$ and with the property that for each $\nu>2\nu_0$ and 
each energy solution $u$ of ${\mathcal Lu}=0$ there holds 
\begin{align}\label{eq:gl2}
&\hskip -0.20in 
\Big|\Big\{x'\in {\BBR}^{n-1}:\,\tilde{N}_a(\nabla u\chi_{T(\Delta_d)})>\nu,\,(M(S^2_b(\nabla u)))^{1/2}\leq\gamma\nu,
\nonumber\\[4pt] 
&\hskip 0in
\big(M(S^2_b(\nabla u))M(\tilde{N}_a^2(\nabla u\chi_{T(\Delta_d)}))\big)^{1/4}\leq\gamma\nu,\,
\, (M(\|\mu\|_{\mathcal C}^{1/2}\tilde{N}_a^2(\nabla u\chi_{T(\Delta_d)}))^{1/2}\le\gamma\nu\Big\}\Big|
\nonumber\\[4pt] 
&\hskip 0.50in
\quad\le C(\gamma)\left|\big\{x'\in{\BBR}^{n-1}:\,\tilde{N}_a(\nabla u\chi_{T(\Delta_d)})(x')>\nu/32\big\}\right|.
\end{align}
Here $\chi_{T(\Delta_d)}$ is the indicator function of the Carleson region $T(\Delta_d)$ and the square function
$S_b$ in \eqref{eq:gl2} is truncated at the height $2d$. Similarly, the Hardy-Littlewood maximal operator $M$
is only considered over all balls $\Delta'\subset\Delta_{md}$ for some enlargement constant $m=m(a)\ge 2$.
\end{lemma}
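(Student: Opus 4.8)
The plan is to run the good-$\lambda$ argument of \cite{DHM}, the two novelties being that the level sets are governed by the $L^2$-average $w$ of $v=\nabla u$ and that the single-graph estimate of Lemma~\ref{S3:L8} plays the role of its real-scalar analogue. Write $G:=\{x'\in{\BBR}^{n-1}:\tilde N_a(\nabla u\chi_{T(\Delta_d)})(x')>\nu\}$, let $\widetilde G:=\{x':\tilde N_a(\nabla u\chi_{T(\Delta_d)})(x')>\nu/32\}$ be the comparison set, and let $F_\gamma$ be the set on which the three maximal-function conditions in \eqref{eq:gl2} all hold. Since $w$ is the $L^2$-average of $v$ one has $\tilde N_{2,a}(\nabla u)=N_a(w)$, and the truncation $\chi_{T(\Delta_d)}$ forces $\widetilde G$ to lie inside a dilate $m_0\Delta_d$ with $m_0=m_0(a)$ (the cone $\Gamma_a(x')$ misses $T(\Delta_d)$ once $|x'-Q|\gtrsim ad$), which is what permits restricting the maximal operator $M$ to balls inside $\Delta_{md}$; on the relevant sets $N_a(w)$ and $\tilde N_a(\nabla u\chi_{T(\Delta_d)})$ agree up to the geometry of $T(\Delta_d)$, which is routine. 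Decompose $\widetilde G=\bigcup_i R_i$ into a Whitney family of surface balls with $\diam R_i\approx\dist(R_i,\widetilde G^{\,c})$ and bounded overlap; it then suffices to prove $|G\cap F_\gamma\cap R_i|\le C(\gamma)|R_i|$ with $C(\gamma)\to0$ as $\gamma\to0$, and sum over $i$.

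Fix $R_i$ with $R_i\cap F_\gamma\neq\varnothing$ (otherwise there is nothing to do), pick $x_i^*\in R_i\cap F_\gamma$, set $r_i\approx\diam R_i$, and let $\bar x_i$ be a boundary point with $\dist(\bar x_i,R_i)\lesssim r_i$ and $\tilde N_a(\nabla u\chi_{T(\Delta_d)})(\bar x_i)\le\nu/32$. Assuming $\gamma\le\gamma(a)$, the constant of Lemma~\ref{l6}, every $x'\in G\cap F_\gamma\cap R_i$ satisfies $N_a(w)(x')>\nu$ and $S_b(v)(x')\le(M(S_b^2(\nabla u)))^{1/2}(x')\le\gamma\nu$, so Lemma~\ref{l6} and Corollary~\ref{S3:L6} give $(M_{h_{\nu,a}}w)(h_{\nu,a}(x'),x')\ge C\nu$ for every such $x'$. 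Since $h_{\nu,a}(\bar x_i)=0$ and $h_{\nu,a}$ is $1/a$-Lipschitz, $h_{\nu,a}\le Cr_i/a$ on $\Delta_{2r_i}$, so for $a$ large we may apply Lemma~\ref{S3:L8} on a $\Delta_r$ with suitable $r\approx r_i$ (its hypothesis $h_{\nu,a}\le 2r$ on $\Delta_{2r}$ holding). By the $L^2$-boundedness of the localized Hardy--Littlewood maximal operator on the Lipschitz graph $\Lambda_{h_{\nu,a}}$ together with Chebyshev,
\[
|G\cap F_\gamma\cap R_i|\ \le\ \Big|\big\{x':\,(M_{h_{\nu,a}}w)(h_{\nu,a}(x'),x')\ge C\nu\big\}\cap m_1R_i\Big|\ \le\ \frac{C}{\nu^2}\int_{m_1R_i}\big|w(h_{\nu,a}(x'),x')\big|^2\,dx'.
\]
Since $\delta(h_{\nu,a}(x'),x')=h_{\nu,a}(x')$ in ${\BBR}^n_+$, the ball $B_{\delta/2}$ defining $w$ over $x'$ is covered by the pieces of the graphs $\{(\theta h_{\nu,a}(y'),y')\}$, $\theta\in[1/6,6]$, over a slightly dilated base; carrying out this change of variables and Fubini yields $\int_{m_1R_i}|w(h_{\nu,a}(\cdot),\cdot)|^2\lesssim\sup_{\theta\in[1/6,6]}\int_{m_2R_i}|v(\theta h_{\nu,a}(\cdot),\cdot)|^2$, and each latter integral is bounded, for the corresponding $\theta$, by Lemma~\ref{S3:L8} in the form using \eqref{Eqqq-25}.

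It remains to bound the terms from \eqref{TTBBMM}--\eqref{Eqqq-25} by $C(\gamma)|R_i|$ using $x_i^*\in F_\gamma$. The square-function terms $\|S_b(v)\|_{L^p(m_3R_i)}^2$ and $\int_{m_3R_i}S_b^2(v)$ are $\lesssim(\gamma\nu)^2|R_i|$ from $(M(S_b^2(\nabla u)))^{1/2}(x_i^*)\le\gamma\nu$ (trading $L^p$ for $L^2$ averages of $S_b(v)$ in the admissible range via Proposition~\ref{Regularity}); the cross term $(1+\|\mu\|_{\mathcal C}^{1/2})\|S_b(v)\|_{L^p}\|N_{2,a}(w)\|_{L^p}$ is $\lesssim(\gamma\nu)^2|R_i|$ from $(M(S_b^2(\nabla u))\,M(\tilde N_a^2(\nabla u\chi_{T(\Delta_d)})))^{1/4}(x_i^*)\le\gamma\nu$; and $\|\mu\|_{\mathcal C}^{1/2}\|N_{2,a}(w)\|_{L^p}^2\lesssim(\gamma\nu)^2|R_i|$ from $(M(\|\mu\|_{\mathcal C}^{1/2}\tilde N_a^2(\nabla u\chi_{T(\Delta_d)})))^{1/2}(x_i^*)\le\gamma\nu$. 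Collecting these, plus the corkscrew contribution below, gives $|G\cap F_\gamma\cap R_i|\le C(\gamma)|R_i|$ with $C(\gamma)\to0$; summing over $i$ and using bounded overlap yields \eqref{eq:gl2}.

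I expect the genuinely delicate point, as in \cite{DHM}, to be the single term that carries no power of $\gamma$: the corkscrew/initial-scale quantity $r^{n-1}|\tilde v(A_r)|^2$ coming from \eqref{Eqqq-25}. Here one must choose the corkscrew point $A_r\in\mathcal K$ so that its defining average ball sits inside $T(\Delta_d)$ (so that $\tilde v(A_r)=\widetilde{\nabla u\,\chi_{T(\Delta_d)}}(A_r)$) and then control $|\tilde v(A_r)|$ by comparing it, through the square function, with $\tilde v$ at the top corkscrew $A_d$, which by hypothesis is $\lesssim\nu_0<\nu/2$; this is precisely where $\nu>2\nu_0$ is used, and it is the reason the comparison set in \eqref{eq:gl2} is taken at level $\nu/32$ rather than at $\nu$. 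This bookkeeping, together with verifying that the lateral truncation $\chi_{T(\Delta_d)}$ does not interfere with the cone and graph geometry near $\partial\Delta_d$ (one may have to discard the few Whitney balls meeting the edge), is the main obstacle; everything else is a transcription of \cite{DHM} with Lemma~\ref{S3:L8} in place of the real-scalar single-graph estimate.
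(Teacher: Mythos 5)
Your overall skeleton (Whitney decomposition of the level-$\nu/32$ set, Lemma~\ref{l6} and Corollary~\ref{S3:L6} to pass to the graph maximal function, Chebyshev plus the family of graphs $\theta h_{\nu,a}$, $\theta\in[1/6,6]$, to reduce to Lemma~\ref{S3:L8}, and the three maximal-function hypotheses to produce factors $(\gamma\nu)^2$) is exactly the \cite{DHM} argument the paper invokes. But there is a genuine gap at the point you yourself flag as delicate: your treatment of the corkscrew term $r^{n-1}|\tilde v(A_r)|^2$ cannot produce a constant $C(\gamma)\to 0$. If you only know $|\tilde v(A_r)|\lesssim \nu/32$ (from the nearby comparison point) or $|\tilde v(A_r)|\lesssim\nu_0<\nu/2$ (from the hypothesis $\nu>2\nu_0$), then after Chebyshev this term contributes $\tfrac{C}{\nu^2}\,r_i^{\,n-1}\,|\tilde v(A_r)|^2\approx C_0|R_i|$ with a \emph{fixed} constant $C_0$ independent of $\gamma$, so the resulting estimate is $|G\cap F_\gamma\cap R_i|\le (C\gamma^2+C_0)|R_i|$, which is not \eqref{eq:gl2}: the lemma needs $C(\gamma)\to0$, and this is essential in Proposition~\ref{S3:C7}, where for each $p$ one must choose $\gamma$ so that $C(\gamma)32^p$ is small. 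Moreover, the proposed control of $|\tilde v(A_r)|$ by ``comparing it, through the square function, with $\tilde v(A_d)$'' spans $\approx\log(d/r_i)$ scales, and under the sole hypothesis $(M(S_b^2(\nabla u)))^{1/2}\le\gamma\nu$ such a chain comparison loses a logarithmic factor; it does not give a bound by $C\nu$ uniformly in $r_i$.

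The device that makes the argument work — and the reason it is recorded in the paper — is the final sentence of Lemma~\ref{S3:L8}: estimates \eqref{TTBBMM} and \eqref{Eqqq-25} hold with $v$ replaced by $v-v_0$ for any constant $v_0\in\mathbb{C}^n$. On each Whitney piece one applies the lemma to $v-v_0$ with $v_0=\tilde v(A_r)$, so the corkscrew term vanishes identically and the Poincar\'e/square-function replacement \eqref{Eqqq-25} leaves only terms carrying $(\gamma\nu)^2$ via the maximal-function hypotheses. The comparison point at level $\nu/32$ (and, for Whitney pieces of top scale $\approx d$, the hypothesis $\nu>2\nu_0$) is used only through the triangle inequality, to guarantee $|v_0|\le\nu/2$ and hence that the bad set is still contained in a level set of the subtracted function at height $\gtrsim\nu/2$, to which Lemma~\ref{l6} and Corollary~\ref{S3:L6} (whose square function is unchanged under subtraction of constants) are then applied. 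With that correction — plus the bookkeeping you already mention for the lateral truncation $\chi_{T(\Delta_d)}$ and for matching the untruncated $N_{2,a}(w)$, $S_b(v)$ appearing in \eqref{TTBBMM} with the truncated quantities in the maximal-function hypotheses — your outline becomes the intended proof.
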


Finally we have the following.

\begin{proposition}\label{S3:C7} Consider the equation ${\mathcal Lu}=0$ in $\Omega=\BBR^n_{+}$ with coefficients satisfying assumptions of Theorem~\ref{S3:T1}. The for any $p>0$
and $a>0$ there exists an integer $m=m(a)\ge 2$ and finite constants $K=K(n,\lambda,\Lambda,p,a)>0$, $C=C(n,\lambda,\Lambda,p,a)>0$ such that if
$$\|\mu\|_{\mathcal C}< K,$$
then for all balls $\Delta_d\subset{\mathbb R}^{n-1}$ we have
\begin{equation}\label{S3:C7:E00ooloc}
\|\tilde{N}^r_a(\nabla u)\|_{L^{p}(\Delta_d)}\le C\|S^{2r}_a(\nabla u)\|_{L^{p}(\Delta_{md})}+Cd^{(n-1)/p}|\widetilde{\nabla u}(A_d)|,
\end{equation}
where $A_d$ denotes the corkscrew point of the ball $\Delta_d$ and $\widetilde{\nabla u}$ is as in \eqref{Eqqq-26}.

We also have a global estimate for any $p>0$ and $a>0$. Under the same assumptions as above (and extra a priori assumption $\|\tilde{N}_a(\nabla u)\|_{L^{p}({\BBR}^{n-1})}<\infty$ when $p< 2$)  there exists a finite 
constant $C=C(n,\lambda,\Lambda,p,a)>0$ such that 
\begin{equation}\label{S3:C7:E00oo}
\|\tilde{N}_a(\nabla u)\|_{L^{p}({\BBR}^{n-1})}\le C\|S_a(\nabla u)\|_{L^{p}({\BBR}^{n-1})}.
\end{equation}
\end{proposition}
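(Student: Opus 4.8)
The proposition follows from the local good-$\lambda$ inequality of Lemma~\ref{LGL-loc} by the standard good-$\lambda$ machinery, exactly as in \cite{DHM}; I will sketch the main points. Fix $a>0$ and $p>0$, and let $m=m(a)$, $b=b(a)$ be the parameters produced by the previous lemmas. The strategy is: (i) prove the local estimate \eqref{S3:C7:E00ooloc} for the truncated nontangential maximal function by integrating the good-$\lambda$ inequality; then (ii) pass to the global estimate \eqref{S3:C7:E00oo} by letting $d\to\infty$ and checking that the corkscrew term disappears.

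\textbf{Step 1: the local estimate.} Fix a boundary ball $\Delta_d$ with corkscrew point $A_d$ and set $\nu_0$ as in Lemma~\ref{LGL-loc}. Write $F:=\tilde N_a(\nabla u\,\chi_{T(\Delta_d)})$ and let $G$ denote the (truncated at height $2d$) maximal square-function quantity, i.e. the maximum of $(M(S^2_b(\nabla u)))^{1/2}$, $\big(M(S^2_b(\nabla u))\,M(\tilde N_a^2(\nabla u\,\chi_{T(\Delta_d)}))\big)^{1/4}$ and $\big(M(\|\mu\|_{\mathcal C}^{1/2}\tilde N_a^2(\nabla u\,\chi_{T(\Delta_d)}))\big)^{1/2}$, all with $M$ taken over balls inside $\Delta_{md}$. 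Lemma~\ref{LGL-loc} says that for all $\nu>2\nu_0$,
\begin{equation*}
\big|\{F>\nu,\ G\le\gamma\nu\}\big|\le C(\gamma)\,\big|\{F>\nu/32\}\big|,
\end{equation*}
with $C(\gamma)\to0$ as $\gamma\to0$. Multiply by $\nu^{p-1}$ and integrate in $\nu$ over $(2\nu_0,\infty)$. The left side is bounded below (after the usual split $\{F>\nu\}\subset\{F>\nu,G\le\gamma\nu\}\cup\{G>\gamma\nu\}$) by a constant times $\int_{2\nu_0}^\infty\nu^{p-1}|\{F>\nu\}|\,d\nu$ minus $\int_0^\infty\nu^{p-1}|\{G>\gamma\nu\}|\,d\nu\lesssim\gamma^{-p}\|G\|_{L^p}^p$; the right side, after the change of variables $\nu\mapsto32\nu$, is $32^p C(\gamma)\int_0^\infty\nu^{p-1}|\{F>\nu\}|\,d\nu$. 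Choosing $\gamma$ small enough that $32^pC(\gamma)\le\tfrac12$ absorbs the main term — this absorption is legitimate because $\|F\|_{L^p}<\infty$ a priori, $F$ being the nontangential maximal function of a compactly (in the Carleson region) supported $W^{1,2}$-gradient. We are left with
\begin{equation*}
\|F\|_{L^p}^p\lesssim \nu_0^p\,|\Delta_{md}| + \gamma^{-p}\|G\|_{L^p(\Delta_{md})}^p.
\end{equation*}
Now $\|(M(S_b^2(\nabla u)))^{1/2}\|_{L^p}\lesssim\|S_b(\nabla u)\|_{L^p}$ by the Hardy--Littlewood maximal theorem (for $p>1$ directly on $S_b^2\in L^{p/2}$ when $p\ge2$, and for general $p$ after noting $S_b^2\in L^{p/2}$ of a function supported near $\Delta_{md}$; the truncation at height $2d$ keeps everything finite), the mixed term is controlled by $\tfrac12\|F\|_{L^p}+C\|S_b(\nabla u)\|_{L^p}$ via Young's inequality after another maximal-function bound, and the $\|\mu\|_{\mathcal C}$-term is $\le K^{1/2}\,C\,\|F\|_{L^p}$, absorbed into the left side provided $K$ is chosen small depending on $n,\lambda,\Lambda,p,a$. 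Also $\nu_0^p|\Delta_{md}|\lesssim d^{n-1}|\tilde{\nabla u}(A_d)|^p$ by definition of $\nu_0$ and $A_d$. Comparing apertures $b$ versus $a$ costs only a change of $m$ (standard cone-aperture comparison for nontangential maximal and square functions). This yields \eqref{S3:C7:E00ooloc}.

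\textbf{Step 2: the global estimate and the main obstacle.} To get \eqref{S3:C7:E00oo}, apply \eqref{S3:C7:E00ooloc} with $\Delta_d$ centred at a fixed boundary point and let $d\to\infty$. The square-function term increases monotonically to $\|S_a(\nabla u)\|_{L^p(\mathbb R^{n-1})}$ (which we may assume finite, else there is nothing to prove). The issue is the corkscrew term $d^{(n-1)/p}|\widetilde{\nabla u}(A_d)|$: one must show it tends to $0$ along a suitable sequence $d_j\to\infty$. For an energy solution $u\in\dot W^{1,2}(\mathbb R^n_+;\mathbb C)$ we have $\nabla u\in L^2(\mathbb R^n_+)$; Caccioppoli on the ball $B_{d/4}(A_d)$ gives $|\widetilde{\nabla u}(A_d)|^2\lesssim d^{-n}\int_{B_{d/2}(A_d)}|\nabla u|^2\,dz=o(d^{-n})\cdot d$ — more precisely $d^{n-1}|\widetilde{\nabla u}(A_d)|^2\lesssim d^{-1}\int_{B_{d/2}(A_d)}|\nabla u|^2\to0$ since the tails of $\int|\nabla u|^2$ vanish. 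This handles $p=2$; for $p\ne2$ one first interpolates or uses the local estimate together with the a priori hypothesis $\|\tilde N_a(\nabla u)\|_{L^p}<\infty$ (imposed in the statement when $p<2$) plus the reverse-Hölder inequality \eqref{RHthm1} of Proposition~\ref{Regularity} to compare $L^p$ and $L^2$ averages of $\nabla u$ on interior balls, reducing the decay of the corkscrew term to the $L^2$ case. I expect Step~2 — verifying the decay of the corkscrew term for all $p>0$, juggling the a priori finiteness assumption and the interior higher-integrability of $\nabla u$ — to be the only genuinely delicate point; the good-$\lambda$ integration in Step~1 is routine once Lemma~\ref{LGL-loc} is in hand, and indeed the argument is identical to \cite{DHM}, to which we refer for the remaining details.
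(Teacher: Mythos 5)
Your Step 1 is the paper's argument only in the range $p>2$, and the extension you claim to all $p>0$ contains a genuine gap. After integrating the good-$\lambda$ inequality \eqref{eq:gl2} of Lemma \ref{LGL-loc} against $\nu^{p-1}\,d\nu$, you must control $\|(M(S^2_b(\nabla u)))^{1/2}\|_{L^p}$, the mixed term, and the $\|\mu\|_{\mathcal C}$-term by $\|S_b(\nabla u)\|_{L^p}$ and a small multiple of $\|F\|_{L^p}$; each of these requires the Hardy--Littlewood maximal operator to be bounded on $L^{p/2}$, which holds only when $p/2>1$. For $0<p\le 2$ the bound $\|(M(S_b^2))^{1/2}\|_{L^p}\lesssim\|S_b\|_{L^p}$ is simply false, and neither compact support of $S_b^2$ near $\Delta_{md}$ nor the truncation at height $2d$ rescues it ($M$ fails to map $L^r$ into $L^r_{\rm loc}$ for $r\le 1$). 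This is precisely why the paper treats only $p>2$ by the direct integration you describe and then obtains \eqref{S3:C7:E00ooloc} for all $p>0$ by a separate, purely real-variable argument of Fefferman--Stein type \cite{FSt}, which takes the already-established local $p>2$ estimate as its key analytic input (a stopping-time/Whitney argument on the level sets of $S$, not another maximal-function bound). So the part you dismiss as ``routine'' is in fact the step the paper must outsource, while your Step 2 (the limit $d\to\infty$, with the energy bound handling $p\ge 2$ and the a priori hypothesis $\|\tilde N_a(\nabla u)\|_{L^p}<\infty$ handling $p<2$ via the fact that $|\widetilde{\nabla u}(A_d)|$ is dominated by $\tilde N_a(\nabla u)$ on a set of measure $\approx d^{n-1}$) does match the paper's reasoning.

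A secondary inaccuracy: you assert $\nu_0^p|\Delta_{md}|\lesssim d^{n-1}|\widetilde{\nabla u}(A_d)|^p$ ``by definition of $\nu_0$''. Since $\nu_0$ is an $L^2$ average of $|\nabla u|$ while $\widetilde{\nabla u}(A_d)$ in \eqref{Eqqq-26} is the average of the vector $\nabla u$, Jensen's inequality gives the reverse comparison; passing from $\nu_0$ to $|\widetilde{\nabla u}(A_d)|$ requires the Poincar\'e-type replacement recorded in \eqref{Eqqq-25} and the ``$v$ may be replaced by $v-v_0$'' remark of Lemma \ref{S3:L8}, fed through the good-$\lambda$ machinery, rather than the one-line bound you state. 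The main item to repair, however, is the missing argument for $0<p\le 2$ in the local estimate.
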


\begin{proof} When $p>2$ \eqref{S3:C7:E00ooloc} follows immediately by a standard argument (multiplying the good-$\lambda$ inequality  \eqref{eq:gl2} by $\nu^{p-1}$ and integrating in $\nu$ over the interval $(2\nu_0,\infty)$). Note that the fact that the square function $S^{2r}_a$ is only integrated over some enlargement of $\Delta_d$ instead of the whole ${\mathbb R}^{n-1}$ follows from the fact that the set $\{x'\in{\BBR}^{n-1}:\,\tilde{N}_a(\nabla u\chi_{T(\Delta_d)})(x')>\nu/32\big\}$ on the righthand side of \eqref{eq:gl2} vanishes outside a ball of diameter comparable to $\Delta_d$. For this reason the maximal operators $M$ in \eqref{eq:gl2} can be restricted to such enlarged ball $\Delta_{md}$. 

The condition $\|\mu\|_{\mathcal C}< K$ comes from the presence of the term\newline $(M(\|\mu\|_{\mathcal C}^{1/2}\tilde{N}^2_a(\nabla u\chi_{T(\Delta_d)})))^{1/2}\le\gamma\nu$ in the good-$\lambda$ inequality. The argument that shows  \eqref{S3:C7:E00ooloc} for all $p>0$ can be found in \cite{FSt}.  The local estimate \eqref{S3:C7:E00ooloc} for $p>2$ is the necessary ingredient for what is otherwise a purely real variable argument. Further details can be found in  \cite{FSt}. 

Finally taking the limit $d\to\infty$ yields \eqref{S3:C7:E00oo}. The additional assumption $\|\tilde{N}_a(\nabla u)\|_{L^{p}({\BBR}^{n-1})}<\infty$ when $p< 2$ comes into play it order to guarantee that the term $d^{(n-1)/p}|\widetilde{\nabla u}(A_d)|$ in 
\eqref{S3:C7:E00ooloc} converges to zero as $d\to\infty$.
\end{proof}

\section{Estimates for the $p$-adapted square function.}
\label{S4}

Let $\Omega=\BBR^n_+$ and assume $u$ is a weak solution ${\mathcal L}u=0$ where
\begin{equation} \mathcal Lu = \partial_{i}\left(A_{ij}(x)\partial_{j}u\right) 
+B_{i}(x)\partial_{i}u\label{eq-zoncolan}
\end{equation}
with the Dirichlet boundary datum $f\in \dot{B}^{2,2}_{1/2}(\partial\Omega;{\BBC}) \cap  \dot{W}^{1,p}(\partial \Omega;{\BBC})$. Assume that 
$A$ is $p$-elliptic and smooth in $\BBR^n_+$ with $A_{00} =1$ and $A_{0j}$ real and that the measure $\mu$ defined as in \eqref{Car_hatAA} is Carleson.

Fix an arbitrary $y'\in\partial\Omega\equiv{\mathbb{R}}^{n-1}$ and consider $\Delta=\Delta_r(y)$; a ball of radius $r$ in ${\mathbb{R}}^{n-1}$ centred at $y'$. Pick 
a smooth cutoff function $\zeta$ which is $x_0-$independent and satisfies
\begin{equation}\label{cutoff-F}
\zeta= 
\begin{cases}
1 & \text{ in } \Delta, 
\\
0 & \text{ outside } 2\Delta,
\end{cases}
\end{equation}
where $2\Delta$ is a ball of radius $2r$ centered at $y'$.
Moreover, assume that $r|\nabla \zeta| \leq c$ for some positive constant $c$ independent of $y'$. 
We note that since
$$\partial_0(A_{0j}\partial_ju)=\partial_j(A_{0j}\partial_0u)-(\partial_jA_{0j})\partial_0u+(\partial_0A_{0j})\partial_ju,$$
we may as well assume that $A_{0j}=0$, $j>0$ by changing coefficients $A_{0j}$ and $A_{j0}$ of the matrix $A$ and modifying $B$. We note that this does not affect ellipticity of $A$ as all $A_{0j}$ are assumed to be real. It follows that, we can assume $\partial_k A_{0j}=0$ for all $j,k=0,1,\dots,n-1$.

We begin by considering the integral quantity for some function $w$ (to be specified later) such that $w|w|^{p/2-1}\in W^{1,2}_{loc}(\Omega)$
\begin{equation}\label{A00}
\mathcal{I}:=\mathscr{R}e\,\iint_{[0,s]\times 2\Delta}A_{ij}\partial_{j}w 
\partial_{i}(|w|^{p-2}\overline{w})x_0\zeta\,dx'\,dx_0
\end{equation}
with the usual summation convention understood. Here $s\in [0,r]$. With $\chi=x_0\zeta$ we have by Theorem 2.4 of \cite{DPcplx} for all $p$ for which $A$ is $p$-elliptic
for some $\lambda_p>0$
\begin{equation}\label{cutoff-AA}
\mathcal{I}\geq{\lambda_p}\iint_{[0,s]\times 2\Delta}|w|^{p-2}|\nabla w|^2 x_0\zeta\,dx'\,dx_0.
\end{equation}

The objective is to ultimately apply (\ref{cutoff-AA}) to $w=\partial_i u$, $i=1,...n-1$ and obtain a quantity that can be bounded from above by expressions that involve $L^p$ norms of 
$|\nabla u|$, and nontangential maximal functions of $|\nabla u|$, on the boundary. To see this, we continue the calculation using the fact that we can bound the right hand side of  (\ref{cutoff-AA})  by the expression 
$\mathcal{I}$ which brings in the equation. 
For the moment, we ignore the issue of finiteness of this expression, even though we use this fact in the calculations that follow. We'll return to this point after some of the basic calculations, for the sake of clarity of exposition.

The idea now is to integrate by parts the formula for $\mathcal I$ in order 
to relocate the $\partial_i$ derivative. This gives 
\begin{align}\label{I+...+IV}
\mathcal{I}
&= \mathscr{R}e\,\int_{\partial\left[(0,s)\times 2\Delta\right]} 
A_{ij}\partial_{j}w|w|^{p-2}\overline{w}x_0\zeta\nu_{x_i}\,d\sigma 
\nonumber\\[4pt]
&\quad - \mathscr{R}e\,\iint_{[0,s]\times 2\Delta}\partial_{i}\left(A_{ij} 
\partial_{j}w\right)|w|^{p-2}\overline{w}x_0\zeta\,dx'\,dx_0 
\nonumber\\[4pt]
&\quad - \mathscr{R}e\,\iint_{[0,s]\times 2\Delta}A_{ij}\partial_{j}{w}|w|^{p-2}\overline{w}\partial_{i}x_0\zeta\,dx'\,dx_0 
\nonumber\\[4pt]
&\quad - \mathscr{R}e\,\iint_{[0,s]\times 2\Delta}A_{ij}\partial_{j}w|w|^{p-2}\overline{w}x_0\partial_{i}\zeta\,dx'\,dx_0
\nonumber\\[4pt]
&=:I+II+III+IV,
\end{align}
where $\nu$ is the outer unit normal vector to $(0,s)\times 2\Delta$. The boundary term $I$ does not vanish
only on the set $\{s\}\times 2\Delta$ and only when $i=0$. This gives
\begin{equation}\label{cutoff-BBB}
I=\mathscr{R}e\,\int_{\{s\}\times 2\Delta} 
A_{0j}\partial_{j}w|w|^{p-2}\overline{w}x_0\zeta\,d\sigma 
\end{equation}

As $\partial_ix_0=0$ for $i>0$ the term $III$ is non-vanishing only for $i=0$. Since $A_{0j}=0$ for $j>0$ and $A_{00} =1$ term $III$   simplifies to

\begin{align}\label{u6fF}
III &=-\mathscr{R}e\,\iint_{[0,s]\times 2\Delta}\partial_{0}{w}|w|^{p-2}\overline{w}\zeta\,dx'\,dx_0 \nonumber\\
&=-\frac1p\iint_{[0,s]\times 2\Delta}  \partial_{0}(|w|^{p})\zeta\,dx'\,dx_0 \\
&=-\frac{1}{p}\int_{2\Delta} |w|^p(s,x')\zeta\,dx' + \frac{1}{p}\int_{2\Delta} |w|^p(0,x')\zeta\,dx' \nonumber
\end{align}

We add up all terms we have so far to obtain
\begin{equation}\label{square01}\begin{split}
\mathcal I
&\leq p^{-1}\int_{2\Delta}  \partial_{0}(|w|^p)(s,x') s \zeta \,dx' - \mathscr{R}e\,\iint_{[0,s]\times 2\Delta}\partial_{i}\left(A_{ij} 
\partial_{j}w\right)|w|^{p-2}\overline{w}x_0\zeta\,dx'\,dx_0 \\
&\quad  - {p}^{-1}\int_{2\Delta} |w|^p(s,x')\zeta\,dx' + {p}^{-1}\int_{2\Delta} |w|^p(0,x')\zeta\,dx'  + IV.
\end{split}\end{equation}

So far $w$ was an arbitrary function. We now apply \eqref{square01} to $w_k=\partial_k u$, $k=1,2,\dots,n-1$ where $u$ solves ${\mathcal L}u=0$. It follows that each $w_k$ solves
\begin{equation}\label{eq-partial}
{\mathcal L}w_k=\partial_i(A_{ij}\partial_jw_k)+B_iw_k=\partial_i((\partial_k A_{ij})w_j)-\partial_k(B_i)w_i.
\end{equation}
It follows that
\begin{equation}\label{eq-II}\begin{split}
II&=- \mathscr{R}e\,\iint_{[0,s]\times 2\Delta}\partial_{i}\left(A_{ij} 
\partial_{j}w_k\right)|w_k|^{p-2}\overline{w_k}x_0\zeta\,dx'\,dx_0 \\
&=\mathscr{R}e\,\iint_{[0,s]\times 2\Delta}B_iw_k|w_k|^{p-2}\overline{w_k}x_0\zeta\,dx'\,dx_0\\
&+\mathscr{R}e\,\iint_{[0,s]\times 2\Delta}(\partial_iA_{ij})w_j|w_k|^{p-2}\overline{w_k}x_0\partial_k\zeta\,dx'\,dx_0\\
&-\mathscr{R}e\,\iint_{[0,s]\times 2\Delta}B_iw_i|w_k|^{p-2}\overline{w_k}x_0\partial_k\zeta\,dx'\,dx_0\\
&+\mathscr{R}e\,\iint_{[0,s]\times 2\Delta}(\partial_iA_{ij})w_j\partial_k(|w_k|^{p-2}\overline{w_k})x_0\zeta\,dx'\,dx_0\\
&-\mathscr{R}e\,\iint_{[0,s]\times 2\Delta}(\partial_kA_{ij})(\partial_iw_j)|w_k|^{p-2}\overline{w_k}x_0\zeta\,dx'\,dx_0\\
&-\mathscr{R}e\,\iint_{[0,s]\times 2\Delta}B_i\partial_k(w_i|w_k|^{p-2}\overline{w_k})x_0\zeta\,dx'\,dx_0.
\end{split}\end{equation}
Here we integrated by parts terms containing two derivatives of $A$ or one derivative of $B$ by moving $\partial_k$ derivative. It is important here that $k\ne 0$ and hence $\partial_kx_0=0$. 
The first term on the righthand side 
can be estimated directly using Theorem~\ref{T:Car} while the last three terms we estimate using Cauchy-Schwarz inequality, the Carleson conditions for $A$ and $B$ and 
Theorem~\ref{T:Car}
\begin{align}\label{TWO-TWO}
|II_4|+|II_{5}|+|II_6| &\leq\left(\iint_{[0,s]\times 2\Delta}\left(|\nabla A|+|B|\right)^{2} 
|w|^{p} x_0\zeta\,dx'\,dx_0\right)^{1/2}  
\cdot\nonumber\\
&\qquad\left(\iint_{[0,s]\times 2\Delta}|w|^{p-2}|\nabla w|^{2}x_0\zeta\,dx'\,dx_0\right)^{1/2} 
\nonumber\\[4pt]
&\leq C(\lambda_p,\Lambda,p,n)\left(\|\mu\|_{\mathcal{C}}\int_{2\Delta} 
\left[\tilde{N}^r_{p,a}(w)\right]^{p}\,dx'\right)^{1/2}\cdot\mathcal{I}^{1/2}. 
\end{align}
In summary we get (after using AG-inequality)
$$|II|\le C(\lambda_p,\Lambda,p,n)\|\mu\|_{\mathcal{C}}\int_{2\Delta} 
\left[\tilde{N}^r_{p,a}(\nabla u)\right]^{p}\,dx' +\frac12\mathcal{I}+II_{2}+II_3.$$
It follows that \eqref{square01} simplifies to (after summing over $k=1,2,\dots n-1$)
\begin{equation}\label{square01aa}\begin{split}
\sum_{k=1}^{n-1}\mathcal I_k
&\leq p^{-1}\int_{2\Delta}  \partial_{0}(|\nabla_T u|^p)(s,x') r \zeta \,dx'  \\
&\hskip-1cm  - {p}^{-1}\int_{2\Delta} |\nabla_T u|^p(s,x')\zeta\,dx' + {p}^{-1}\int_{2\Delta} |\nabla_T u|^p(0,x')\zeta\,dx'  \\
&\hskip-1cm+ C(\lambda_p,\Lambda,p,n) \|\mu\|_{\mathcal{C}} \int_{2\Delta} \left[\tilde{N}^{r}_{p,a}(\nabla u)\right]^p \,dx' +II_{2}+II_3  + IV.
\end{split}\end{equation}

We estimate the terms $IV$. It can be bounded (up to a constant) by
\begin{equation}\label{eq5.16}
\iint_{[0,s]\times 2\Delta}|\nabla w||w|^{p-1}x_0|\partial_T\zeta|dx'dx_0,
\end{equation}
where $\partial_T\zeta$ denotes any of the derivatives in the direction parallel to the boundary. Recall that $\zeta$ is a smooth cutoff function equal to $1$ on $\Delta$ and $0$ outside $2\Delta$. In particular, we may assume $\zeta$ to be of the form $\zeta=\eta^2$ for another smooth function $\eta$ such that $|\nabla_T\eta|\le C/r$. By Cauchy-Schwarz \eqref{eq5.16} can be further estimated by
\begin{equation}\label{eq5.17}
\left(\iint_{[0,s]\times 2\Delta}|\nabla w|^2|w|^{p-2}x_0(\eta)^2dx'dx_0\right)^{1/2}\left(\iint_{[0,s]\times 2\Delta}|w|^{p}x_0|\nabla_T\eta|^2dx'dx_0\right)^{1/2}
\end{equation}
\begin{equation}
\lesssim{\mathcal I}^{1/2}\left(\frac1r\iint_{[0,s]\times (2\Delta\setminus\Delta)}|w|^pdx'dx_0\right)^{1/2}\le \varepsilon{\mathcal I}+C_\varepsilon\int_{2\Delta\setminus\Delta}\left[\tilde{N}^r_{p,a}(\nabla u)\right]^{p}\,dx'.\nonumber
\end{equation}
In the last step we have used AG-inequality and a trivial estimate of the solid integral $|u|^p$ by the $p$-averaged nontangential maximal function. 

Terms $II_2$ and $II_3$ are also similar. We use $|\nabla A|,|B|\le \|\mu|^{1/2}_{\mathcal C}/x_0$ and what remains has a trivial estimate by $\int_{2\Delta\setminus\Delta}\left[\tilde{N}^r_{p,a}(\nabla u)\right]^{p}\,dx$.
Substituting this and \eqref{eq5.17} into \eqref{square01aa} and by integrating in $s$ over $[0,r]$ and dividing by $r$ we finally obtain
\begin{align}\label{square02aa-loc}
&\hskip -0.20in 
\iint_{\Delta}\left[S^{r/2}_p(\nabla_T u)\right]^p\,dx'\le
\nonumber\\[4pt]
&\hskip -0.20in 
2\sum_{k=1}^{n-1}\iint_{[0,r]\times\Delta}|\nabla(\partial_k u)|^2|\partial_ku|^{p-2}\,x_0(1-{\textstyle\frac{x_0}{r}})\,dx'\,dx_0\lesssim
\nonumber\\[4pt]
&\quad  + {p}^{-1}\int_{2\Delta} |\nabla_T u|^p(0,x')\,dx' + {p}^{-1}\int_{2\Delta} |\nabla_T u|^p(r,x')\,dx'  
\nonumber\\[4pt]
&\quad +C\|\mu\|_{\mathcal{C}}\int_{2\Delta}\left[\tilde{N}^{r}_{p,a}(\nabla u)\right]^p\,dx'+C\int_{2\Delta\setminus\Delta}\left[\tilde{N}^r_{p,a}(\nabla u)\right]^{p}\,dx'.
\end{align}
We return now to the issue of finiteness of the quantities in \ref{cutoff-AA}.
We fix an $\varepsilon > 0$ and consider a bound for the 
expression
\begin{equation}\label{eps}
\iint_{[\varepsilon,s]\times 2\Delta}|w|^{p-2}|\nabla w|^2 (x_0-\varepsilon)\zeta\,dx'\,dx_0
\end{equation}
instead of $\iint_{[0,s]\times 2\Delta}|w|^{p-2}|\nabla w|^2 x_0\zeta\,dx'\,dx_0$.
The quantity \ref{eps} is finite by the interior estimates \eqref{RHthm2}.
By Theorem \ref{T:Car}, all of the previous calculations for the term (\ref{eps}), after averaging in $s$ will give the following estimate:
\begin{align}
&\hskip -0.20in 
\sum_{k=1}^{n-1}\iint_{[\varepsilon,r/2]\times\Delta}|\nabla(\partial_k u)|^2|\partial_ku|^{p-2}\,(x_0-\varepsilon))\,dx'\,dx_0\lesssim
\nonumber\\[4pt]
&\quad  + {p}^{-1}\int_{2\Delta} |\nabla_T u|^p(\varepsilon,x')\,dx' + {p}^{-1}\int_{2\Delta} |\nabla_T u|^p(r,x')\,dx'  
\nonumber\\[4pt]
&\quad +C\|\mu\|_{\mathcal{C}}\int_{2\Delta}\left[\tilde{N}^{r}_{p,a,\varepsilon}(\nabla u)\right]^p\,dx'+C\int_{2\Delta\setminus\Delta}
\left[\tilde{N}^r_{p,a,\varepsilon}(\nabla u)\right]^{p}\,dx'.
\end{align}
where $\tilde{N}^r_{p,a,\varepsilon}(\nabla u)$ denotes the nontangential maximal function relative to the domain $\{x_0 > \varepsilon\}$ as 
defined in \eqref{TFC-6x}.

To deal with the quantity $\int_{2\Delta} |\nabla_T u|^p(\varepsilon,x')\,dx' $, which is not itself a priori finite, we average the inequalities above 
over $\varepsilon \in [\varepsilon_0/2, \varepsilon_0]$. Averaging in $r$ as we have done earlier bounds the boundary integral
$\int_{2\Delta} |\nabla_T u|^p(r,x')\,dx'$ by a solid integral and   
we obtain, for each $k = 1,...n-1$,
\begin{equation}
\iint_{[\varepsilon_0,r/4]\times\Delta}|\nabla(\partial_k u)|^2|\partial_ku|^{p-2}\,(x_0-\varepsilon_0)\,dx'\,dx_0
\lesssim
C\int_{2\Delta}
\left[\tilde{N}^r_{p,a,\varepsilon_0/2}(\nabla u)\right]^{p}\,dx'.
\end{equation}
\hskip -0.20in

By Fatou's lemma, letting $\varepsilon_0 \rightarrow 0$,  the expressions in (\ref{cutoff-AA}) have an upper bound in terms of 
$\int_{2\Delta}\left[\tilde{N}^{r}_{p,a}(\nabla u)\right]^p\,dx$. Whenever this nontangential maximal function expression is
finite, the calculations leading to (\ref{square02aa-loc}) that depend on the finiteness of (\ref{cutoff-AA}) are justified.

To obtain a global version of \eqref{square02aa-loc}, consider a sequence of disjoint boundary balls 
$(\Delta_r(y'_j))_{k\in\mathbb N}$ such that $\cup_{j}\Delta_{2r}(y'_j) $ covers $\partial\Omega={\BBR}^{n-1}$ 
and consider a partition of unity $(\zeta_{j})_{k\in\mathbb N}$ subordinate to this cover. That is, 
assume $\sum_j \zeta_{j} = 1$ on ${\BBR}^{n-1}$ and each $\zeta_{j}$ is supported in $\Delta_{2r}(y'_j)$. 
Given that $\sum_j \partial_i\zeta_{j} = 0$ for each $i$, it follows by summing over all $k$
that 
$$\sum_{j} II_{2}+II_3 + IV= 0.$$
It follows from \eqref{square01aa} (after averaging in $s$ over $[0,r]$)
\begin{align}\label{square02aa}
&\hskip -0.20in 
\iint_{{\BBR}^{n-1}}\left[S^{r/2}_p(\nabla_T u)\right]^p\,dx'\le
\nonumber\\[4pt]
&\hskip -0.20in 
2\sum_{k=1}^{n-1}\iint_{[0,r]\times{\BBR}^{n-1}}|\nabla(\partial_k u)|^2|\partial_ku|^{p-2}\,x_0(1-{\textstyle\frac{x_0}{r}})\,dx'\,dx_0\lesssim
\nonumber\\[4pt]
&\quad  + {p}^{-1}\int_{{\BBR}^{n-1}} |\nabla_T u|^p(0,x')\,dx' + {p}^{-1}\int_{{\BBR}^{n-1}} |\nabla_T u|^p(r,x')\,dx'  
\nonumber\\[4pt]
&\quad +C\|\mu\|_{\mathcal{C}}\int_{{\BBR}^{n-1}}\left[\tilde{N}^{r}_{p,a}(\nabla u)\right]^p\,dx'.
\end{align}

We now modify our calculation above by considering a Lipschitz function $g:{\mathbb R}^{n-1}\to [0,\infty)$ such that $\sup_{2\Delta} g \le r/4$ (we only assume this to avoid integration over an empty set). We perform the same calculation starting from \eqref{A00} but this time we integrate over the set
$$([0,s]\times 2\Delta)\cap\Omega_g,$$
where 
$$\Omega_g:=\{(x_0,x')\in\mathbb R\times{\mathbb R}^{n-1}:\, x_0>g(x')\}.$$

Rather that repeating the whole calculation again we focus on the differences. We note that 
we will only consider $s\in[r/2,2r]$ to avoid complications that might arise from integration over empty sets.

The first difference will be that the term $I$ of \eqref{I+...+IV} will contain an additional boundary and hence
\begin{align}\label{cutoff-BBBv2}
I&=\mathscr{R}e\,\int_{\{s\}\times 2\Delta} 
(\partial_{0}w)|w|^{p-2}\overline{w}x_0\zeta\,d\sigma\\&+ \mathscr{R}e\,\int_{([0,s]\times2\Delta)\cap\partial\Omega_g} 
A_{ij}\partial_{j}w|w|^{p-2}\overline{w}x_0\zeta\nu_i\,d\sigma,\nonumber
\end{align}
where $\nu_i$ is the $i$-component of the outer normal of $\partial\Omega_g$.
Term \eqref{u6fF} becomes
\begin{align}\label{u6fFv2}
III 
&=-\frac{1}{p}\int_{2\Delta} |w|^p(s,x')\zeta\,dx' + \frac{1}{p}\int_{2\Delta} |w|^p(g(x'),x')\zeta\,dx' .
\end{align}
We look at the term $II$. As we integrate by parts to obtain \eqref{eq-II} we pick up two extra boundary terms.
\begin{align}\label{eq-IIbd}
II_{bd} =&-
\mathscr{R}e\,\int_{([0,s]\times 2\Delta)\cap\partial\Omega_g}(\partial_iA_{ij})w_j|w_k|^{p-2}\overline{w_k}x_0\nu_k\zeta\,d\sigma\\
&+\mathscr{R}e\,\int_{([0,s]\times 2\Delta)\cap\partial\Omega_g}B_iw_i|w_k|^{p-2}\overline{w_k}x_0\nu_k\zeta\,d\sigma.
\nonumber
\end{align}

We also modify some estimates. Terms $II_5$, $II_6$ and $II_7$ of \eqref{eq-II} are now integrated over the set
$([0,s]\times 2\Delta)\cap\Omega_g$ which allow us to use the estimate \eqref{Ca-222-x} of Theorem~\ref{T:Car}. This gives us
\begin{equation}
|II_5|+|II_6|+|II_7|\lesssim\left(\|\mu\|_{\mathcal{C}}\int_{([0,s]\times 2\Delta)\cap\partial\Omega_g} 
\left[\tilde{N}^{2r}_{p,a,g}(\nabla u)\right]^{p}\,dx'\right)^{1/2}\cdot\mathcal{I}^{1/2}. 
\end{equation}

Similar observation applies to terms $II_2$, $II_3$ and $IV$. It follows that what we have so far implies the estimate for some $c_p>0$:
\begin{equation}\label{square01aav2}\begin{split}
&\hskip-0.5cmc_p\sum_{k=1}^{n-1}\mathcal \iint_{([0,s]\times 2\Delta)\cap\Omega_g}|\nabla(\partial_k u)|^2|\partial_k u|^{p-2}x_0\zeta\,dx'dx_0\\
&\hskip-1cm \leq\, p^{-1}\int_{2\Delta}  \partial_{0}(|\nabla_T u|^p)(s,x') r \zeta \,dx'  \\
&\hskip-1cm  - {p}^{-1}\int_{2\Delta} |\nabla_T u|^p(s,x')\zeta\,dx' + {p}^{-1}\int_{2\Delta} |\nabla_T u|^p(g(x'),x')\zeta\,dx'  \\
&\hskip-1cm+ C(\lambda_p,\Lambda,\|\mu\|_{\mathcal{C}},p,n) \int_{T(2\Delta)\times\partial\Omega_g} \left[\tilde{N}^{2r}_{p,a,g}(\nabla u)\right]^p \,dx'\\
&\hskip-1cm+ \sum_{k=1}^{n-1}\mathscr{R}e\,\int_{([0,s]\times2\Delta)\cap\partial\Omega_g} 
A_{ij}\partial_{j}(\partial_k u)|\partial_k u|^{p-2}\overline{\partial_k u}\,x_0\zeta\nu_i\,d\sigma+II_{bd}.
\end{split}\end{equation}
Our goal is to estimate the first two terms on the right-hand side of \eqref{square01aav2} by 
$\tilde{N}^{2r}_{p,a,g}$. To do that we average in $s$ twice. We first integrate \eqref{square01aav2}
over an interval $s\in[r/2(1+\theta), r(1+\theta)]$ and then integrate the resulting inequality again in $\theta\in[0,1]$. This turns both mentioned terms into solid integrals of $|\nabla_T u|^p$ over a Whitney-type box inside $\Omega_g$. This simplifies  \eqref{square01aav2} to
\begin{equation}\label{square01aav3}\begin{split}
&\hskip-0.5cm c_p\sum_{k=1}^{n-1}\mathcal \iint_{([0,r/2]\times \Delta)\cap\Omega_g}|\nabla(\partial_k u)|^2|\partial_k u|^{p-2}x_0\,dx'dx_0\\
&\hskip-1cm\le {p}^{-1}\int_{2\Delta} |\nabla_T u|^p(g(x'),x')\,dx'\\
&\hskip-1cm+  C(\lambda_p,\Lambda,\|\mu\|_{\mathcal{C}},p,n) \int_{T(2\Delta)\times\partial\Omega_g} \left[\tilde{N}^{2r}_{p,a,g}(\nabla u)\right]^p \,dx'\\
&\hskip-1cm+ \sum_{k=1}^{n-1}\mathscr{R}e\,\int_{([0,s]\times2\Delta)\cap\partial\Omega_g} 
A_{ij}\partial_{j}(\partial_k u)|\partial_k u|^{p-2}\overline{\partial_k u}\,x_0\zeta\nu_i\,d\sigma+II_{bd}.
\end{split}\end{equation}
We shall use \eqref{square01aav3} in the following Lemma.

\begin{lemma}\label{LGL2} 
Let $\Omega=\BBR^n_+$ and assume $u$ be the energy solution of \eqref{eq-zoncolan}.  Assume that 
$A$ is $p$-elliptic and smooth in $\BBR^n_+$ with $A_{00} =1$ and $A_{0j}$ real and that the measure $\mu$ defined as in \eqref{Car_hatAA} is Carleson.

Consider any $b>a>0$. Then for each $\gamma\in(0,1)$ there exists a constant $C(\gamma)>0$ 
such that $C(\gamma,a,b)\to 0$ as $\gamma\to 0$ and with the property that for each $\nu>0$ we have
\begin{align}\label{eq:gl2aa}
&\hskip -0.20in 
\left|\Big\{x'\in {\BBR}^{n-1}:\,S_{p,a}(\nabla_Tu)(x')>\nu,\, \tilde{N}_b(\nabla u)(x')\le\gamma \nu\Big\}\right|
\nonumber\\[4pt] 
&\hskip 0.50in
\quad\le C(\gamma)\left|\big\{x'\in{\BBR}^{n-1}:\,{S}_{p,b}(\nabla_Tu)(x')>\nu/2\big\}\right|.
\end{align}
Here $\tilde{N}_b$ denotes the $L^2$ version of the nontangential maximal function defined over cones of aperture $b$.
\end{lemma}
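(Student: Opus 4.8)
The plan is to prove the good-$\lambda$ inequality \eqref{eq:gl2aa} by a stopping-time / Calder\'on--Zygmund decomposition on the open set where the truncated square function is large, reducing matters on each stopping cube to the local square-function estimate \eqref{square01aav3} applied on a suitable Lipschitz subdomain $\Omega_g$. First I would fix $\nu>0$ and let $\mathcal O_\nu=\{x'\in\mathbb R^{n-1}:S_{p,b}(\nabla_Tu)(x')>\nu/2\}$, which is open (the truncated/full $S_{p,b}$ is lower semicontinuous). If $\mathcal O_\nu=\mathbb R^{n-1}$ there is nothing to prove since the right-hand side is infinite; otherwise decompose $\mathcal O_\nu=\bigcup_j Q_j$ into a Whitney collection of dyadic cubes $Q_j$ with $\operatorname{diam}(Q_j)\approx\operatorname{dist}(Q_j,\mathcal O_\nu^c)$. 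It suffices to show, for each $j$,
\begin{equation}\label{eq-gl-local}
\big|\{x'\in Q_j:\,S_{p,a}(\nabla_Tu)(x')>\nu,\ \tilde N_b(\nabla u)(x')\le\gamma\nu\}\big|\le C(\gamma)|Q_j|,
\end{equation}
with $C(\gamma)\to0$; summing over $j$ then gives \eqref{eq:gl2aa}. We may assume the set on the left of \eqref{eq-gl-local} is nonempty, so fix a point $x_j'\in Q_j$ with $\tilde N_b(\nabla u)(x_j')\le\gamma\nu$.

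Next I would split $S_{p,a}(\nabla_Tu)$ over the cone $\Gamma_a(x')$ into its part $S^{\,c\,d_j}_{p,a}$ at heights $\le cd_j$ (with $d_j=\operatorname{diam}Q_j$ and $c$ a fixed dimensional constant) and the tail above height $cd_j$. Because there is a point of $\mathcal O_\nu^c$ at distance $\approx d_j$ from $Q_j$, the tail part is, for every $x'\in Q_j$, dominated by $S_{p,b}(\nabla_Tu)(y')\le\nu/2$ at that nearby point $y'\in\mathcal O_\nu^c$ (after enlarging the aperture from $a$ to $b$, which is exactly why $b>a$ is assumed). Hence on the set in \eqref{eq-gl-local} one must have the local piece $S^{\,c\,d_j}_{p,a}(\nabla_Tu)(x')>\nu/2$, so by Chebyshev
\begin{equation}\label{eq-cheb}
\big|\{x'\in Q_j:\,S_{p,a}(\nabla_Tu)(x')>\nu,\,\tilde N_b(\nabla u)(x')\le\gamma\nu\}\big|\le\frac{4}{\nu^2}\int_{Q_j}\big[S^{\,c\,d_j}_{p,a}(\nabla_Tu)(x')\big]^2\,dx'.
\end{equation}
By Fubini (the $p$-adapted analogue of \eqref{SSS-1}--\eqref{SSS-2} at the level of $L^2$ of $S^{h}_{p,a}$, which is the identity $\|S^h_{p,a}(w)\|_{L^2}^2\approx\iint|\nabla w|^2|w|^{p-2}\delta\,dX$ over the appropriate sawtooth), the right-hand side of \eqref{eq-cheb} is bounded by $C\nu^{-2}$ times the solid integral $\iint_{\mathcal R_j}|\nabla(\partial_k u)|^2|\partial_k u|^{p-2}x_0\,dX$ summed over $k=1,\dots,n-1$, where $\mathcal R_j$ is a Carleson-type region over a dilate of $Q_j$ truncated at height $\approx d_j$.

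Now I would invoke the localized square-function estimate \eqref{square01aav3} with $g\equiv0$ (or, if one prefers to stay strictly inside the region where $\tilde N_b$ is controlled, with $g$ a small Lipschitz bump keeping $\Omega_g$ inside the relevant sawtooth) on the dilated cube $2\Delta\approx C Q_j$ at scale $r\approx d_j$. The output of \eqref{square01aav3} bounds the solid square-function integral by three types of terms: (a) the boundary term $\int_{2\Delta}|\nabla_Tu|^p(g(x'),x')\,dx'$; (b) a Carleson term $C\int_{T(2\Delta)}[\tilde N^{2r}_{p,a}(\nabla u)]^p$; and (c) the boundary integrals $II_{bd}$ and the $\nu_i$-flux term on $\partial\Omega_g$. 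For (b): on $T(2\Delta)$ one has $\tilde N^{2r}_{p,a}(\nabla u)\lesssim \tilde N_b(\nabla u)$ pointwise by enlarging the aperture and the height comparison, and since we are on the region reachable from points of $Q_j$ where $\tilde N_b(\nabla u)\le\gamma\nu$, together with the fact that $\tilde N_b$ of $\nabla u$ is bounded by $\gamma\nu$ on a fixed fraction of $Q_j$ (else \eqref{eq-gl-local} is immediate with $C(\gamma)=$ the complementary measure ratio, handled separately), one gets $\int_{T(2\Delta)}[\tilde N^{2r}_{p,a}(\nabla u)]^p\lesssim(\gamma\nu)^p|Q_j|$; but we need this with the $L^2$, not $L^p$, normalization against $\nu^{-2}$ --- here one uses $p$-to-$2$ self-improvement of averages (Proposition \ref{Regularity}, estimates \eqref{RHthm2} and \eqref{NN}) so that the solid integral with weight $|w|^{p-2}$ is comparable to the corresponding $L^2$ square-function integral, and then the Carleson term contributes $C\gamma^{2}\nu^{2}|Q_j|$, hence $C\gamma^2|Q_j|$ after dividing by $\nu^2$. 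For (a): the boundary trace $\int|\nabla_Tu|^p(g,\cdot)$ over $2\Delta$ is controlled, via the trace bound \eqref{E1:uonh}-type inequality from Lemma \ref{S3:L8} on the graph $\Omega_g$ (or simply by $\|\tilde N_b(\nabla u)\|^p$ on a dilate), by $(\gamma\nu)^p|Q_j|$, again converted to the $L^2$ scale by the same self-improvement, giving another $C\gamma^{?}$; the exponent works out because \eqref{square01aav3} already carries the weight $|\partial_k u|^{p-2}$. For (c): the flux and $II_{bd}$ terms on $\partial\Omega_g$ are estimated by choosing $g$ so that $\partial\Omega_g$ lies in a good region, exactly as in Lemma \ref{S3:L8}; they are either absorbed into the left-hand side (after an AG/Cauchy--Schwarz split against $\mathcal I^{1/2}$ as already done in the derivation of \eqref{square01aav3}) or bounded by $C(\gamma\nu)^2|Q_j|$ using the pointwise control of $\tilde N_b(\nabla u)$ on $Q_j$. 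Collecting, the right-hand side of \eqref{eq-cheb} is at most $C(\gamma)|Q_j|$ with $C(\gamma)\to0$ as $\gamma\to0$ (plus the small Carleson-norm dependence, harmless since $\|\mu\|_{\mathcal C}$ is fixed and may be taken $\le1$), which is \eqref{eq-gl-local}.

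The main obstacle is the matching of normalizations: $S_{p,a}$ and $II_{bd}$ naturally produce the weight $|\partial_k u|^{p-2}$ and $L^p$ quantities, whereas the Chebyshev step \eqref{eq-cheb} is posed at the $L^2$ level (dividing by $\nu^2$), and $\tilde N_b$ here is the $L^2$-averaged maximal function. Reconciling these requires the $p\leftrightarrow2$ comparison of interior averages from Proposition \ref{Regularity} (and Proposition 3.5 of \cite{DPcplx}), applied on Whitney balls inside $\mathcal R_j$ and $T(2\Delta)$, so that $\iint|\nabla\partial_ku|^2|\partial_ku|^{p-2}x_0\,dX$ over a Whitney region is comparable to $\nu^{p-2}$ times the $L^2$ square-function integral there on the stopping set where $\tilde N_b(\nabla u)\approx\gamma\nu$; one must be careful that this comparison is used only on regions where both $|\nabla u|$ is not too large (controlled by $\tilde N_b\le\gamma\nu$ from below in the Whitney sense) and not degenerate, which is exactly where the stopping-time selection of $Q_j$ and the truncation of $S_{p,b}$ at height $2d$ are used. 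The rest --- the Whitney decomposition, the aperture enlargements $a\to b$, the averaging in $s$ to turn boundary traces into solid integrals, and the absorption of $\mathcal I^{1/2}$ terms --- is routine and parallels \cite{DHM}.
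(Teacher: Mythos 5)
Your skeleton is the same as the paper's (Whitney decomposition of the open set $\{S_{p,b}(\nabla_Tu)>\nu/2\}$, use of a nearby good point to reduce to a truncated square function at height $\approx\mathrm{diam}(Q_j)$, a Chebyshev step, and an appeal to the localized estimate \eqref{square01aav3}), but two of the crucial steps fail as written. The first is the normalization you yourself flag. You run Chebyshev at the $L^2$ level, dividing by $\nu^2$ and converting $\int [S^{cd_j}_{p,a}(\nabla_Tu)]^2$ by Fubini into the solid integral $\sum_k\iint|\nabla\partial_ku|^2|\partial_ku|^{p-2}x_0\,dX$. But every bound one can actually prove for that solid integral is $p$-homogeneous in $u$: via \eqref{square01aav3}, \eqref{RHthm2} and the control $\tilde N_{p,b}(\nabla u)\lesssim\gamma\nu$ (Proposition \ref{Regularity}) it is at best $C(\gamma\nu)^p|Q_j|$. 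Dividing by $\nu^2$ leaves $C\gamma^p\nu^{p-2}|Q_j|$, which is not uniform in $\nu$, so no good-$\lambda$ inequality follows. Your proposed repair --- replacing $|\partial_ku|^{p-2}$ by $(\gamma\nu)^{p-2}$ ``on the stopping set where $\tilde N_b(\nabla u)\approx\gamma\nu$'' --- is not available: the stopping condition gives only the one-sided bound $\tilde N_b(\nabla u)\le\gamma\nu$ on $L^2$ averages; there is no lower bound and no pointwise control of $|\partial_ku|$ in this complex-coefficient setting, and for $p<2$ the desired inequality would go the wrong way anyway. The paper sidesteps this entirely by estimating $|F^i_\nu|\lesssim\nu^{-p}\sum_k\iint_{\Omega_c\cap T(\Delta_i)}|\nabla\partial_ku|^2|\partial_ku|^{p-2}x_0\,dx$, i.e.\ working at the $p$-th power throughout, so the final bound $C(\gamma\nu)^p|\Delta_i|$ yields $|F^i_\nu|\le C\gamma^p|\Delta_i|$ uniformly in $\nu$.

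The second gap is the choice of subdomain and the treatment of the boundary and flux terms. With $g\equiv 0$, \eqref{square01aav3} produces the trace term $\int_{2\Delta}|\nabla_Tu|^p(0,x')\,dx'$ and nontangential-maximal terms over the whole Carleson box, and neither is controlled by $\gamma\nu$: the bound $\tilde N_b(\nabla u)\le\gamma\nu$ is known only on the stopping subset $F^i_\nu\subset Q_j$ (on its complement you know only $S_{p,b}\le\nu/2$, which says nothing about $\tilde N_b$), and cones over $F^i_\nu$ do not cover $T(2\Delta)$; your dichotomy ``else $\tilde N_b>\gamma\nu$ on most of $Q_j$, handled separately'' does not produce a constant tending to $0$ with $\gamma$. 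The device that makes the argument close in the paper is to apply \eqref{square01aav3} on the sawtooth $\Omega_c=\bigcup_{x'\in F^i_\nu}\Gamma_c(x')$ built over the stopping set itself, with $a<c<b$, so that every boundary point of $\Omega_c$ is either in $F^i_\nu$ (where the trace is $\le\gamma\nu$ a.e.) or lies inside some $\Gamma_b(x')$ with $x'\in F^i_\nu$ (where interior $L^2$ and $L^p$ averages are $\lesssim\gamma\nu$); and then to average over the aperture $c\in[a,(a+b)/2]$ so that the remaining boundary terms --- in particular the flux term containing the second derivatives $A_{ij}\partial_j\partial_ku$, which cannot be absorbed into $\mathcal I$ --- become solid integrals over Whitney cubes $Q_j$ with $2Q_j\subset\Omega_b$, where \eqref{RHthm2} and $\tilde N_{p,b}(\nabla u)\lesssim\gamma\nu$ give the bound $C(\gamma\nu)^p\,\mathrm{diam}(Q_j)^{n-1}$. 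Your parenthetical about a ``small Lipschitz bump'' gestures toward this construction, but none of it is carried out, and as stated your treatment of items (a)--(c) does not close the estimate.
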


\begin{proof} 
We observe that $\tilde{N}_b(\nabla u)\le \gamma\nu$ also implies $\tilde{N}_{p,b}(\nabla u)\lesssim\gamma\nu$ thanks to Proposition \ref{Regularity}.  Also clearly, 
$\big\{x'\in{\BBR}^{n-1}:\,{S}_{p,b}(\nabla_T u)(x')>\nu/2\}$ is an open 
subset of ${\BBR}^{n-1}$. When this set is empty, or is all of ${\BBR}^{n-1}$, 
estimate \eqref{eq:gl2aa} is trivial, so we focus on the case when the set in question is 
both nonempty and proper. Granted this, we may consider a Whitney decomposition $(\Delta_i)_{i\in I}$ 
of it, consisting of open cubes in ${\mathbb{R}}^{n-1}$. Let $F_\nu^i$ be the set appearing on the 
left-hand side of \eqref{eq:gl2aa} intersected with $\Delta_i$.  Let $r_i$ be the diameter of $\Delta_i$. Due to the nature of the Whitney decomposition there exists a point $p'\in 2\Delta_i$ such that ${S}_{p,b}(\nabla_T u)(p')<\nu/2$.
From this and the fact that $b>a$ it follows that for all $x'\in F^i_\nu$ we have
$$S^{d}_{p,a}(\nabla_T u)(x')>\nu/2,$$
where $S^{d}_{p,a}$ is the truncated version of the square function at some height $d\approx r_i$, where the precise nature of relation between $d$ and $r_i$ depends on the apertures $a$ and $b$. 

For some $a<c<b$ consider the domain
$$\Omega_c=\bigcup_{x'\in F^i_\nu} \Gamma_c(x');$$
this is a Lipschitz domain with Lipschitz constant $1/c$. Observe that $F^i_\nu\subset \partial\Omega_c$.
It follows that
$$|F^i_\nu|\le \frac{2^p}{\nu^p}\int_{F^i_\nu}\left[S^{d}_{p,a}(\nabla_Tu)(x')\right]^p\,dx'\lesssim \nu^{-p}
\sum_{k=1}^{n-1}\mathcal \iint_{{\Omega_c\cap T(\Delta_i)}}|\nabla(\partial_k u)|^2|\partial_k u|^{p-2}x_0\,dx.$$
We apply \eqref{square01aav3}.  It follows that
\begin{align}\label{eq-zonc2}
|F^i_\nu|&\lesssim \nu^{-p}\Big\{\int_{\partial\Omega_c\cap T(2\Delta_i)}\left(\left|\nabla_T u\big|_{\partial\Omega_c}\right|^p+\left[\tilde{N}^{2d}_{p,a,c}(\nabla u)\right]^{p}\right)\,d\sigma\\
&+ \sum_{k=1}^{n-1}\Big[\mathscr{R}e\,\int_{T((2\Delta_i)\cap\partial\Omega_c} 
A_{ij}(\partial^2_{jk} u)|\partial_k u|^{p-2}\overline{\partial_k u}\,x_0\zeta\nu_i\,d\sigma\nonumber\\
&+\mathscr{R}e\,\int_{([0,s]\times 2\Delta)\cap\partial\Omega_g}(\partial_iA_{ij})\partial_j u| \partial_k u|^{p-2}\overline{\partial_k u}\,x_0\nu_k\zeta\,d\sigma\nonumber\\
&+\mathscr{R}e\,\int_{([0,s]\times 2\Delta)\cap\partial\Omega_g}B_i\partial_i u|\partial_k u|^{p-2}\overline{\partial_k u}\,x_0\nu_k\zeta\,d\sigma\Big]\Big\},
\nonumber
\end{align}
where $\tilde{N}^{2d}_{p,a,c}$ is defined using nontangential cones with aperture $a$ with vertices on $\partial\Omega_c$. Due to the fact that each of these cones is contained in one of the cones $\Gamma_b(x')$
for some $x'\in F^i_\nu$ (as $c<b$) and on $F^i_\nu$: $\tilde{N}_b(\nabla_Tu)(x')\le\gamma \nu$ we also have 
$\tilde{N}^{2d}_{p,a,c}(\nabla_Tu)\lesssim\gamma\nu$ everywhere on $\partial\Omega_c$.  This takes care of the second term. 

We still need to deal with the four other terms on the righthand side. We do it by converting these terms into a solid integrals by 
averaging $c$ over the interval $[a,(a+b)/2]$. Let us denote by
$$\mathcal O=\Omega_{(a+b)/2}\setminus\overline{\Omega_a}.$$
${\mathcal O}$ is the set over which the four terms we want to bound will integrate over after the averaging. The sets ${\Omega_c}$ also share $F^i_\nu$ as a common boundary, however
there we have a trivial estimate 
$$\int_{F^i_\nu}\left|\nabla_T u\big|_{\partial\Omega_c}\right|^pd\sigma\le (\gamma\nu)^p|\Delta_i|,$$
from the fact that $\tilde{N}_b(\nabla_Tu)(x')\le\gamma \nu$ on $F^i_\nu$, while the last three terms of
\eqref{eq-zonc2} vanish there (as $x_0=0$). 

Given the way the set $\mathcal O$ is defined geometric considerations imply that it can be covered by a non-overlapping collection of Whitney cubes $\{Q_j\}$ in ${\mathbb R}^n_+$ with the following properties:
\begin{equation}
{\mathcal O}\subset\bigcup_j Q_j,\qquad r_j=\mbox{diam}(Q_j)\approx\mbox{dist}(Q_i,\partial{\mathbb R}^n_+),\quad 2Q_j\subset \Omega_b.
\end{equation}

Furthermore the projections of $Q_j$ onto the boundary ${\mathbb R}^{n-1}$ are \lq\lq almost disjoint"; that is each such projection overlaps with at most $K=K(a,b)$ other projections. From this
$\sum_j \mbox{diam}(Q_j)^{n-1}\approx |2\Delta_i|$.

Consider the contribution of the first term on the right-hand side of \eqref{eq-zonc2} after the averaging  in $c$ on each $Q_j$. Such term can be bounded by
$$(\mbox{diam}(Q_j))^{-1}\iint_{Q_j}|\nabla u|^p dx\lesssim (\gamma\nu)^p \mbox{diam}(Q_j)^{n-1},$$
where the bound $\lesssim (\gamma\nu)^p$ comes from the fact that $Q_j\subset \Omega_b$ and hence the $L^p$ average of $\nabla u$ on $Q_j$ has this bound from our assumptions. Summing over all $Q_j$ gives us the bound
$$\sum_j(\mbox{diam}(Q_j))^{-1}\iint_{Q_j}|\nabla u|^p dx\lesssim (\gamma\nu)^p |2\Delta_i|.$$
In fact, we have this bound also for the fourth and fifth term on the right-hand side of \eqref{eq-zonc2} since $|\nu_k|\le 1$ and $|\nabla A|x_0, |B|x_0\lesssim\|\mu\|_{\mathcal C}^{1/2}$ and hence we are again dealing with a solid integral of $|\nabla u|^p$ over each $Q_j$. Finally, the third term on the right-hand side of \eqref{eq-zonc2} is somewhat different and on $Q_j$ has the bound by
$$(\mbox{diam}(Q_j))^{-1}\iint_{Q_j}|\partial_k u|^{p-1}|\nabla \partial_k u|x_0 dx$$
which since $x_0\approx \mbox{diam}(Q_j)$ is further bounded by Cauchy-Schwarz
$$\lesssim (\mbox{diam}(Q_j))^{-1}\left(\iint_{Q_j}|\partial_k u|^{p} dx\right)^{1/2}\left((\mbox{diam}(Q_j))^2\iint_{Q_j}|\nabla\partial_k u|^2|\partial_k u|^{p-2} dx\right)^{1/2}$$
where for the second term we can use \eqref{RHthm2} to again get bound of the whole expression by
$C\mbox{diam}(Q_j)^{n-1}$. It follows we have after the averaging procedure for every term of \eqref{eq-zonc2} the same bound (up to a constant)  and that 
$$|F^i_\nu|\le C(a,b,\|\mu\|_{\mathcal C})\gamma^p|\Delta_i|.$$
Summing over all $i$ yields \eqref{eq:gl2aa} as desired.
\end{proof}

We will require a localized version of Lemma \ref{LGL2} as well.

\begin{lemma}\label{LocalGoodL}
Let $u$ be as in Lemma \ref{LGL2}. Fix $R \geq h$ and consider a boundary ball $\Delta_R \subset {\BBR}^{n-1}$. Let $p\geq q > 1$ for any $q$ such that $A$ is $q$-elliptic.
Let 
$$\nu_0^p = C \dint_{\Delta_{2R}} \left[N_b^{2R}(\nabla u)\right]^p dx',$$
where $C$ is a constant depending only on dimension (calculated in the proof below).
Then for each $\gamma\in(0,1)$ there exists a constant $C(\gamma)>0$ 
such that $C(\gamma,a,b)\to 0$ as $\gamma\to 0$ and with the property that for each $\nu>\nu_0$ 
\begin{align}\label{eq:gl2-2}
&\hskip -0.20in 
\left|\Big\{x'\in \Delta_R :\,S^R_{q,a}(\nabla_Tu)(x')>\nu,\, \tilde{N}^{2R}_b(\nabla u)(x')\le\gamma \nu\Big\}\right|
\nonumber\\[4pt] 
&\hskip 0.50in
\quad\le C(\gamma)\left|\big\{x'\in \Delta_R:\,{S}_{q,b}(\nabla_Tu)(x')>\nu/2\big\}\right|.
\end{align}

\end{lemma}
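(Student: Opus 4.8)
The plan is to mimic the proof of Lemma~\ref{LGL2}, but keeping all quantities truncated at the scale $R$ and localized to the boundary ball $\Delta_R$. First I would observe, exactly as in the proof of Lemma~\ref{LGL2}, that the set $\{x'\in\Delta_R:\,S_{q,b}(\nabla_T u)(x')>\nu/2\}$ is relatively open in $\Delta_R$; if it is empty or equals all of $\Delta_R$ the estimate is trivial (in the latter case one uses that $\nu>\nu_0$ together with the definition of $\nu_0$ as an $L^p$-average of $N_b^{2R}(\nabla u)$ over $\Delta_{2R}$, which via Chebyshev controls the measure of $\Delta_R$ by the right-hand side). Otherwise I take a Whitney decomposition $(\Delta_i)_{i\in I}$ of this open set inside $\Delta_R$, with $r_i=\diam(\Delta_i)$, and as before use the Whitney property to find a point $p'\in 2\Delta_i$ with $S_{q,b}(\nabla_T u)(p')<\nu/2$; since $b>a$, this forces $S^{d_i}_{q,a}(\nabla_T u)(x')>\nu/2$ for all $x'$ in the bad set $F^i_\nu:=\Delta_i\cap\{S^R_{q,a}(\nabla_T u)>\nu,\ \tilde N^{2R}_b(\nabla u)\le\gamma\nu\}$, with $d_i\approx r_i$.

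Next, for each $i$ I would build the Lipschitz sawtooth domains $\Omega_c=\bigcup_{x'\in F^i_\nu}\Gamma_c(x')$ for $a<c<b$, with Lipschitz constant $1/c$ and $F^i_\nu\subset\partial\Omega_c$, and apply the estimate \eqref{square01aav3} with $g$ the Lipschitz graph defining $\partial\Omega_c$ and $\Delta=\Delta_i$. This bounds
$\nu^{-p}\sum_{k}\iint_{\Omega_c\cap T(\Delta_i)}|\nabla\partial_k u|^2|\partial_k u|^{p-2}x_0\,dx$
(which dominates $|F^i_\nu|$) by a boundary term $\int_{\partial\Omega_c\cap T(2\Delta_i)}|\nabla_T u|_{\partial\Omega_c}|^q\,d\sigma$, a Carleson-controlled term $\|\mu\|_{\mathcal C}\int[\tilde N^{2d_i}_{q,a,c}(\nabla u)]^q$, and three boundary integrals involving $\nabla\partial_k u$, $\nabla A$ and $B$ on $\partial\Omega_c$. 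On $\partial\Omega_c$ each cone of aperture $a$ sits inside some $\Gamma_b(x')$ with $x'\in F^i_\nu$, so $\tilde N^{2d_i}_{q,a,c}(\nabla u)\lesssim\gamma\nu$ there (using Proposition~\ref{Regularity} to pass from $\tilde N_b$ of $\nabla u$ in the $L^2$ sense to $\tilde N_{q,b}$); likewise $|\nabla_T u|_{\partial\Omega_c}|\le\gamma\nu$ on the portion of $\partial\Omega_c$ lying in $T(2\Delta_i)$, giving the bound $\lesssim(\gamma\nu)^q|\Delta_i|$ for those two terms. For the remaining three boundary integrals I average $c$ over $[a,(a+b)/2]$ to convert them into solid integrals over the collar $\mathcal O=\Omega_{(a+b)/2}\setminus\overline{\Omega_a}$, cover $\mathcal O$ by Whitney cubes $Q_j\subset{\mathbb R}^n_+$ with $2Q_j\subset\Omega_b$ and almost-disjoint boundary projections of total mass $\approx|2\Delta_i|$, and on each $Q_j$ bound the contribution by $(\diam Q_j)^{-1}\iint_{Q_j}|\nabla u|^q\,dx$ (for the $\nabla A$, $B$ terms using $|\nabla A|x_0+|B|x_0\lesssim\|\mu\|_{\mathcal C}^{1/2}$) or, for the $\nabla\partial_k u$ term, by Cauchy--Schwarz plus \eqref{RHthm2}; since $2Q_j\subset\Omega_b$ the $L^q$ average of $\nabla u$ on $Q_j$ is $\lesssim\gamma\nu$, so summing over $j$ yields $\lesssim(\gamma\nu)^q|2\Delta_i|$. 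Putting these together gives $|F^i_\nu|\le C(a,b,\|\mu\|_{\mathcal C})\gamma^q|\Delta_i|$ — note the exponent is $q$, not $p$, and the condition $p\ge q$ enters only through $|\partial_k u|^{p-2}$ being the exponent actually appearing in $S^R_{q,\cdot}$, which is $q$; I would double-check the index bookkeeping so the passage from $S^R_{q,a}$ to the $q$-adapted solid integral is clean. Summing over $i\in I$ gives the right-hand side $\sum_i|\Delta_i|\approx|\{x'\in\Delta_R:\,S_{q,b}(\nabla_T u)>\nu/2\}|$ and proves \eqref{eq:gl2-2}.

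The one genuinely new point relative to Lemma~\ref{LGL2} is handling the truncation scale $R$ and the role of $\nu_0$: the hypothesis $\nu>\nu_0$ is precisely what lets us start the Whitney/good-$\lambda$ machine at a scale where the corkscrew/boundary-average contributions are already absorbed, and it guarantees that $F^i_\nu$ is empty for cubes $\Delta_i$ that would otherwise be too large (so that all relevant $\Omega_c\cap T(\Delta_i)$ stay within the region where $S^{2R}_b$ and $\tilde N^{2R}_b$ are the operative truncated objects). I expect the main obstacle to be exactly this localization: making sure that truncating the square function at height $R$ and the nontangential maximal function at height $2R$ is consistent with the sawtooth construction — i.e. that $d_i\approx r_i\lesssim R$ for all Whitney cubes $\Delta_i$ of a subset of $\Delta_R$, so no cone or Whitney box escapes the truncation window — and that the definition of $\nu_0$ with its dimensional constant $C$ is chosen large enough to kill the "global/corkscrew" remainder that was simply absent in the non-localized Lemma~\ref{LGL2}. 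Once those geometric bookkeeping issues are pinned down, every analytic estimate is literally the one used in Lemma~\ref{LGL2}.
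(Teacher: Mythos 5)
Your reduction to the machinery of Lemma \ref{LGL2} (Whitney decomposition of the bad set, the sawtooth domains $\Omega_c$, the estimate \eqref{square01aav3}, averaging in $c$, the Whitney boxes $Q_j$) is exactly how the paper finishes: after its opening step it simply says the proof proceeds as in Lemma \ref{LGL2}. The gap is in that opening step, which is the only genuinely new content of this lemma and which you leave as an acknowledged ``obstacle,'' with heuristics that are not the actual mechanism. The paper's use of $\nu_0$ is concrete: from \eqref{square02aa-loc} (by the usual averaging in the height) one gets the local bound $\|S^{R}_{q,b}(\nabla_T u)\|_{L^q(\Delta_R)}\lesssim \|N^{2R}_b(\nabla u)\|_{L^q(\Delta_{2R})}$; then Chebyshev, H\"older (this is precisely where $p\ge q$ enters), and Young's inequality give
\begin{equation*}
\big|\Delta_R\cap\{S^R_{q,b}(\nabla_Tu)>\nu/2\}\big|\;\le\; C_\varepsilon\,\nu^{-p}\int_{\Delta_{2R}}\big[N^{2R}_b(\nabla u)\big]^p\,dx'\;+\;\varepsilon\,|\Delta_R|,
\end{equation*}
and choosing $\varepsilon=1/4$, hence $C=4C_\varepsilon$ in the definition of $\nu_0$, yields $\big|\Delta_R\cap\{S^R_{q,b}>\nu/2\}\big|<\tfrac12|\Delta_R|$ for every $\nu>\nu_0$. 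This half-measure bound is what makes the localized Whitney decomposition usable: it guarantees that each Whitney cube $\Delta_i$ of $\Delta_R\cap\{S^R_{q,b}>\nu/2\}$ has a point of $2\Delta_i\cap\Delta_R$ at which $S^R_{q,b}(\nabla_T u)<\nu/2$, so the stopping point needed to truncate the square function at height $\approx r_i$ lies inside $\Delta_R$ and the sawtooth argument of Lemma \ref{LGL2} can run.

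Your treatment of this point does not work as stated: the case in which $\{x'\in\Delta_R:\,S_{q,b}(\nabla_Tu)>\nu/2\}$ is all of $\Delta_R$ is not trivial (the right-hand side then carries the small factor $C(\gamma)$, and Chebyshev together with $\nu>\nu_0$ produces no $\gamma$-smallness for the left-hand side); the correct resolution is that this case cannot occur for $\nu>\nu_0$, by the half-measure bound above. Likewise, $\nu>\nu_0$ does not act by making $F^i_\nu$ empty for large cubes; it acts by bounding the measure of the whole bad set so that, for every Whitney cube, a comparison point with small truncated square function exists inside $\Delta_R$. Once this step is supplied, the rest of your argument (with exponent $q$ in the adapted square function, giving $|F^i_\nu|\lesssim\gamma^q|\Delta_i|$ and summation over $i$) coincides with the paper's.
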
 
\begin{proof} It follows from \eqref{square02aa-loc} (by well-familiar averaging) that
\begin{equation}\label{eq-tourm}
\|S^{R}_{q, b}(\nabla_Tu)\|_{L^q(\Delta_R)} \lesssim \|N^{2R}_b(\nabla u)\|_{L^q(\Delta_{2R})}.
\end{equation}
Therefore,

\begin{align}\label{Sqwithnu}
\hskip -0.20in 
\big| \Delta_R \cap \{S^R_q > \nu/2\} \big|    &\lesssim  \nu^{-q} \|N^{2R}_b(\nabla u)\|^q_{L^q(\Delta_{2R})}\\
&
\lesssim \nu^{-q} \|N^{2R}_b(\nabla u)\|^{q/p}_{L^p(\Delta_{2R})} \big| \Delta_{2R} \big|^{1-q/p}  \nonumber\\
&\lesssim    C_{\varepsilon}\nu^{-p} \int_{\Delta_{2R}} (N^{2R}_b(\nabla u))^p + \varepsilon \big|\Delta_{R}\big|.
\end{align}
Choosing $\varepsilon = 1/4$, which determines $C_{\varepsilon}$,  and we now fix $C = 4C_{\varepsilon}$ in the definition of $\nu_0$.
This implies that for any $\nu>\nu_0$, we have that
$$\big| \Delta_R \cap \{S^R_{q,b} > \nu/2\} \big| < 1/2 \big| \Delta_R\big|.$$
Thus, there exists a Whitney decomposition of  $\Delta_R \cap \{S^R_{q,b} > \nu/2\}$ into open cubes $\Delta_i$ with the property that
$2\Delta_i \cap \Delta_R$ contains a point for which $S^R_{q,b}(\nabla_T u) < \nu/2.$
From this point on, the proof proceeds as in Lemma \ref{LGL2}.
\end{proof}

\begin{corollary}\label{S4:C1} Under the assumption of Lemma \ref{LGL2}, for any $q \geq p >1$ and $a>0$ there exists a finite 
constant $C=C(\lambda_p,\Lambda,p,q,a,\|\mu\|_{\mathcal C},n)>0$ such that 
\begin{equation}\label{S3:C7:E00oo=sdd}
\|S^R_{p,a}(\nabla_T u)\|_{L^{q}({\Delta_{R}})}  \le C\|\tilde{N}^{2R}_{p,a}(\nabla u)\|_{L^{q}({\Delta_{2R}})},
\end{equation}
\begin{equation}\label{S3:C7:E00oo=s}
\|S_{p,a}(\nabla_Tu)\|_{L^{q}({\BBR}^{n-1})}\le C\|\tilde{N}_{p,a}(\nabla u)\|_{L^{q}({\BBR}^{n-1})}.
\end{equation}
The inequality \eqref{S3:C7:E00oo=s} also holds for any $q > 0$, 
provided we know a priori that $\|S_{p,a}(\nabla_Tu)\|_{L^{q}({\BBR}^{n-1})}<\infty$.
\end{corollary}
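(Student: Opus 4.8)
The plan is to obtain both inequalities from the good-$\lambda$ inequalities of Lemmas~\ref{LGL2} and \ref{LocalGoodL} by the standard device of multiplying by $q\nu^{q-1}$ and integrating in $\nu$; the only points that require attention are the reabsorption of a square-function term that appears on the right-hand side of the good-$\lambda$ inequality and the passage between the various nontangential maximal functions (the $L^2$-averaged $\tilde N_b$ that appears in the lemmas and the $L^p$-averaged $\tilde N_{p,a}$ that appears in the statement).

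I treat the global estimate \eqref{S3:C7:E00oo=s} first. Writing
$$\|S_{p,a}(\nabla_T u)\|_{L^q({\BBR}^{n-1})}^q=\int_0^\infty q\nu^{q-1}\big|\{x':\,S_{p,a}(\nabla_T u)(x')>\nu\}\big|\,d\nu,$$
and splitting, for each $\nu$ and each $\gamma\in(0,1)$,
$$\{S_{p,a}(\nabla_T u)>\nu\}\subset\{S_{p,a}(\nabla_T u)>\nu,\ \tilde N_b(\nabla u)\le\gamma\nu\}\cup\{\tilde N_b(\nabla u)>\gamma\nu\},$$
we apply Lemma~\ref{LGL2} to the first set and integrate to get
$$\|S_{p,a}(\nabla_T u)\|_{L^q}^q\le 2^q\,C(\gamma)\,\|S_{p,b}(\nabla_T u)\|_{L^q}^q+\gamma^{-q}\,\|\tilde N_b(\nabla u)\|_{L^q}^q.$$
Enlarging the aperture of a square function of the form $S_{p,\cdot}$ changes its $L^q$ norm only by a constant depending on $a,b,q$ — the Fefferman--Stein change-of-aperture comparison, whose proof is purely geometric and insensitive to the density $|\nabla w|^{2}|w|^{p-2}\delta^{2-n}$ (see \cite{FSt}) — so $\|S_{p,b}(\nabla_T u)\|_{L^q}\le C_{a,b,q}\|S_{p,a}(\nabla_T u)\|_{L^q}$. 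Provided $\|S_{p,a}(\nabla_T u)\|_{L^q}<\infty$, we fix $\gamma$ so small that $2^qC(\gamma)C_{a,b,q}^q<\tfrac12$ and reabsorb, obtaining $\|S_{p,a}(\nabla_T u)\|_{L^q}\le C\|\tilde N_b(\nabla u)\|_{L^q}$. Finally, for a solution of $\mathcal Lu=0$ the $L^2$- and $L^p$-averaged nontangential maximal functions of $\nabla u$ have comparable $L^q$ norms and the aperture may be changed at the cost of a constant (Proposition~\ref{Regularity}, in particular \eqref{NN}, together with the argument of \cite{DPcplx}); hence the right-hand side is dominated by $C\|\tilde N_{p,a}(\nabla u)\|_{L^q}$, which is \eqref{S3:C7:E00oo=s}.

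For the local estimate \eqref{S3:C7:E00oo=sdd} the argument is identical except that Lemma~\ref{LocalGoodL} replaces Lemma~\ref{LGL2} and is available only for $\nu>\nu_0$. Thus
$$\|S^R_{p,a}(\nabla_T u)\|_{L^q(\Delta_R)}^q\le\nu_0^q\,|\Delta_R|+\int_{\nu_0}^\infty q\nu^{q-1}\big|\{x'\in\Delta_R:\,S^R_{p,a}(\nabla_T u)(x')>\nu\}\big|\,d\nu,$$
and, by the definition of $\nu_0$ together with the $N$--$\tilde N$ and $L^2$--$L^p$ comparisons of Proposition~\ref{Regularity}, the first term is $\le C\|\tilde N^{2R}_{p,a}(\nabla u)\|_{L^q(\Delta_{2R})}^q$. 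Integrating the good-$\lambda$ inequality over $\nu>\nu_0$ produces a term $2^qC(\gamma)\|S_{p,b}(\nabla_T u)\|_{L^q(\Delta_R)}^q$ and a term $\gamma^{-q}\|\tilde N^{2R}_b(\nabla u)\|_{L^q(\Delta_R)}^q$; the latter is again controlled by $C\|\tilde N^{2R}_{p,a}(\nabla u)\|_{L^q(\Delta_{2R})}^q$, while the former is reabsorbed after a localized change of aperture — one splits $\Gamma_b(x')$, $x'\in\Delta_R$, into its portion of height $\lesssim R$, which is controlled by a truncated square function over a fixed enlargement of $\Delta_R$ (finite for each $R$ by the interior estimate \eqref{RHthm2} and the smoothness of the coefficients) and absorbed for $\gamma$ small, and a tail over heights $\gtrsim R$, which is finite by the energy bound for $u$ and a Caccioppoli estimate for $\nabla_T u$ and is likewise controlled by the right-hand side. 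Since every quantity in this argument is manifestly finite for a fixed $R$, no a priori hypothesis is needed here; letting $R\to\infty$ then yields \eqref{S3:C7:E00oo=s} in the range $q\ge p$. For general $q>0$ one runs the global argument directly, and there the reabsorption step forces the stated a priori assumption $\|S_{p,a}(\nabla_T u)\|_{L^q}<\infty$.

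I expect the reabsorption of the square-function term on the right-hand side of the good-$\lambda$ inequalities to be the main obstacle: it is exactly what necessitates the quantitative ``$C(\gamma)\to0$ as $\gamma\to0$'' form of Lemmas~\ref{LGL2}--\ref{LocalGoodL}, it requires the Fefferman--Stein change-of-aperture comparison for the $p$-adapted square function, and — for the range $q>0$ — it requires the a priori finiteness of $\|S_{p,a}(\nabla_T u)\|_{L^q}$, which is why that hypothesis is present in the statement. The remainder is the routine distribution-function computation combined with the $N$--$\tilde N$ and $L^2$--$L^p$ comparisons already recorded in Proposition~\ref{Regularity}.
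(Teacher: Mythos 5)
Your proposal follows essentially the same route as the paper's own proof: both arguments run the level-set integration of the good-$\lambda$ inequalities of Lemmas \ref{LGL2} and \ref{LocalGoodL}, reabsorb the resulting square-function term via the change-of-aperture equivalence for $p$-adapted square functions, convert $\tilde N_b(\nabla u)$ into $\tilde N_{p,a}(\nabla u)$ through Proposition \ref{Regularity}, obtain the global estimate for $q\ge p$ by letting $R\to\infty$ in the local one, and invoke the a priori finiteness hypothesis only when $q<p$. The single point of divergence is the justification of finiteness needed for the reabsorption: the paper truncates the $\nu$-integral at a level $M$ and uses $q\ge p$ together with \eqref{eq-tourm} (so only truncated square functions and trivially finite quantities ever appear), whereas your sketch relies on interior estimates and on the assertion that the portion of $S_{p,b}(\nabla_T u)$ coming from heights above $R$ is controlled by $\|\tilde N^{2R}_{p,a}(\nabla u)\|_{L^{q}(\Delta_{2R})}$ --- a control that is not actually available, but also not needed once the square function on the right-hand side of Lemma \ref{LocalGoodL} is read, as its proof dictates, in truncated form.
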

\begin{proof} 

The estimate \eqref{S3:C7:E00oo=sdd}
is a consequence of the local good-$\lambda$ inequality established above and the equivalence (\cite{CMS}) of $p$-adapted square functions with different aperture in any $L^q$ norm.

When $q \geq p$, and $M$ is large,  
 $$ \int_0^M   \nu^{q-1}  \big| \Delta_R \cap \{S^R_{p,a}(\nabla_Tu) > \nu\}\big| d\nu \leq C(M) \int_0^M   \nu^{p-1}  \big| \Delta_R \cap \{S^R_{p,a}(\nabla_Tu) > \nu\}\big| d\nu.
 $$
 
 By \eqref{eq-tourm} and the fact that the coefficients are smooth, the right hand side is finite. Therefore, the left hand side is also bounded, with a constant that
 may depend on $M$. 
 
 Now we multiply the good-$\lambda$ inequality of
 Lemma \ref{LocalGoodL} by  $\nu^{p-1}$ and integrate separately over $(0, \nu_0)$ and $(\nu_0, M)$.
 This gives
 
$$ \|S^R_{p,a}(u)\|_{L^{q}({\Delta_{R}})}  \le C\|\tilde{N}^{2R}_{p,a}(u)\|_{L^{q}({\Delta_{2R}})}, $$

\noindent after taking the limit as $M \to \infty$. 

The estimate \eqref{S3:C7:E00oo=s} follows by taking the limit $R\to\infty$.

When $q<p$, the local good-$\lambda$ inequality is not available, which is why we need the additional a priori assumption $\|S_{p,a}(\nabla_Tu)\|_{L^{q}({\BBR}^{n-1})}<\infty$. The proof proceed otherwise as above but using Lemma \ref{LGL2}.
\end{proof}

So far we have avoided considering the square function of $\partial_0u$. We remedy it now. Observe
that since
$$|\nabla(\partial_0 u)|\le |\partial^2_{00}u|+|\nabla(\nabla_Tu)|,$$
we can use previous calculations for the square function of $\nabla_Tu$ and focus on $\partial^2_{00}u$.

Since $u$ solves ${\mathcal L}u=0$
and $A_{00}=1$ we have for
$$\partial^2_{00}u=-\sum_{(i,j)\ne (0,0)}\partial_i(A_{ij}\partial_ju)-\sum_iB_i\partial_i u.$$
It follows that we have the estimate:
\begin{equation}\label{zertex1}
S^R_{2,a}(\partial_0 u)(x')\le S^R_{2,a}(\nabla_T u)(x') +C\,{\mathcal T}^R_{a}(\nabla u)(x'),
\end{equation}
where we define
\begin{equation}\label{zertex2}
{\mathcal T}^R_{a}(\nabla u)(Q)=\left(\int_{\Gamma^R_a(Q)}(|\nabla A|^2+|B|^2)|\nabla u|^2\delta(x)^{2-n}dx\right)^{1/2},
\end{equation}
Considering the same $\Omega_g$ as above we have an analogue of \eqref{square01aav3}:

\begin{equation}\label{square01aav3-2}\begin{split}
&\hskip-1cm\iint_{([0,r]\times \Delta)\cap\Omega_g}(|\nabla A|^2+|B|^2)|\nabla u|^2x_0\,dx'dx_0\\
\hskip-1cm&\le C\|\mu\|_{\mathcal C}\int_{T(2\Delta)\times\partial\Omega_g} \left[\tilde{N}^{2r}_{p,a,g}(\nabla u)\right]^2 \,dx.
\end{split}\end{equation}

If follows we can establish a good-lambda inequality analogous to Lemma \ref{LGL2}.

\begin{lemma}\label{LGL} 
Let $\Omega=\BBR^n_+$ and assume $u$ be the energy solution of \eqref{eq-zoncolan}.  Assume that 
$A$ is $2$-elliptic and smooth in $\BBR^n_+$ with $A_{00} =1$ and $A_{0j}$ real and that the measure $\mu$ defined as in \eqref{Car_hatAA} is Carleson.

Consider any $b>a>0$. Then for each $\gamma\in(0,1)$ there exists a constant $C(\gamma)>0$ 
such that $C(\gamma,a,b)\to 0$ as $\gamma\to 0$ and with the property that for each $\nu>0$ we have
\begin{align}\label{eq:gl2v2}
&\hskip -0.20in 
\left|\Big\{x'\in {\BBR}^{n-1}:\,{\mathcal T}_{a}(\nabla u)(x')>\nu,\,\|\mu\|_{\mathcal{C}}^{1/2}\tilde{N}_b(\nabla u)(x')\le\gamma \nu\Big\}\right|
\nonumber\\[4pt] 
&\hskip 0.50in
\quad\le C(\gamma)\left|\big\{x'\in{\BBR}^{n-1}:\,{\mathcal T}_{b}(\nabla u)(x')>\nu/2\big\}\right|.
\end{align}
\end{lemma}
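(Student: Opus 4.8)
The plan is to run the argument of Lemma~\ref{LGL2} essentially verbatim, taking advantage of the fact that for the error square function $\mathcal T$ the local estimate we need, \eqref{square01aav3-2}, is considerably cleaner than \eqref{square01aav3}: its right-hand side contains only the single Carleson term $\|\mu\|_{\mathcal C}\int[\tilde N^{2r}_{p,a,g}(\nabla u)]^2$, with no boundary integrals of $\nabla_T u$ or of $A_{ij}\partial_j(\partial_k u)$, since it follows directly from the Carleson embedding Theorem~\ref{T:Car} applied to the density $(|\nabla A|^2+|B|^2)x_0$ with no integration by parts. In particular the averaging in the auxiliary parameter $c$ that was needed in Lemma~\ref{LGL2} to turn boundary terms into solid integrals plays no role here, and the gain comes out as a power of $\gamma$ directly.

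First I would dispose of the trivial cases: since $\mathcal T_a(\nabla u)\le\mathcal T_b(\nabla u)$ pointwise, the set on the left of \eqref{eq:gl2v2} is contained in the open set $\mathcal G_\nu:=\{x'\in{\BBR}^{n-1}:\,\mathcal T_b(\nabla u)(x')>\nu/2\}$, and if $\mathcal G_\nu$ is empty or all of ${\BBR}^{n-1}$ there is nothing to prove. Otherwise I would fix a Whitney decomposition $(\Delta_i)_{i\in I}$ of $\mathcal G_\nu$ into cubes of diameter $r_i\approx\dist(\Delta_i,\partial\mathcal G_\nu)$, denote by $F^i_\nu$ the intersection of the left-hand set of \eqref{eq:gl2v2} with $\Delta_i$, and reduce matters to proving $|F^i_\nu|\lesssim\gamma^2|\Delta_i|$ for each $i$.

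For a fixed $i$, the Whitney property supplies a point $p'\in 2\Delta_i$ with $\mathcal T_b(\nabla u)(p')\le\nu/2$, and the standard cone-geometry fact (as in \cite{DHM} and Lemma~\ref{LGL2}, using $a<b$) that the part of $\Gamma_a(x')$ above a height $d\approx r_i$ lies inside $\Gamma_b(p')$ then forces $\mathcal T^d_a(\nabla u)(x')>\nu/2$ for all $x'\in F^i_\nu$. Choosing $a<c<b$, I would form the Lipschitz domain $\Omega_c:=\bigcup_{x'\in F^i_\nu}\Gamma_c(x')$ (Lipschitz constant $1/c$, with $F^i_\nu\subset\partial\Omega_c$ and boundary function vanishing on $F^i_\nu$) and estimate, by Chebyshev and Fubini,
\[
|F^i_\nu|\le\frac{4}{\nu^2}\int_{F^i_\nu}\big[\mathcal T^d_a(\nabla u)(x')\big]^2\,dx'
\lesssim\nu^{-2}\iint_{\Omega_c\cap T(\Delta_i)}\big(|\nabla A|^2+|B|^2\big)|\nabla u|^2\,x_0\,dX .
\]
Next I would apply \eqref{square01aav3-2} with $\Omega_g=\Omega_c$ (after the usual harmless enlargement of the surface balls so its hypotheses hold) to bound the right side by $\nu^{-2}\|\mu\|_{\mathcal C}\int_{\partial\Omega_c\cap T(2\Delta_i)}[\tilde N^{2d}_{p,a,c}(\nabla u)]^2\,d\sigma$. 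Finally, using that every truncated cone with vertex on $\partial\Omega_c$ entering $\tilde N^{2d}_{p,a,c}$ sits inside some $\Gamma_b(x')$ with $x'\in F^i_\nu$ (cones whose vertex projects far from $F^i_\nu$ rise above height $2d$ and are irrelevant), together with the reverse-H\"older improvement \eqref{RHthm1} of Proposition~\ref{Regularity} to compare $L^p$ and $L^2$ averages and the hypothesis $\|\mu\|_{\mathcal C}^{1/2}\tilde N_b(\nabla u)(x')\le\gamma\nu$ on $F^i_\nu$, I would conclude $\|\mu\|_{\mathcal C}^{1/2}\tilde N^{2d}_{p,a,c}(\nabla u)\lesssim\gamma\nu$ everywhere on $\partial\Omega_c\cap T(2\Delta_i)$; since $\sigma(\partial\Omega_c\cap T(2\Delta_i))\approx|2\Delta_i|$ the factors of $\|\mu\|_{\mathcal C}$ cancel and $|F^i_\nu|\lesssim\gamma^2|\Delta_i|$. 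Summing over $i\in I$ then gives \eqref{eq:gl2v2} with $C(\gamma)=C\gamma^2\to0$.

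The main obstacle — really the only nonroutine point — is the cone-inclusion bookkeeping: one must check that the height $d\approx r_i$ can be chosen so the tail of $\Gamma_a(x')$ is captured by $\Gamma_b(p')$, and that the truncated cones based on $\partial\Omega_c$ lie in cones $\Gamma_b(x')$ with $x'\in F^i_\nu$. Both are the geometric statements already established in \cite{DHM} and reused in Lemma~\ref{LGL2}, depending only on the strict ordering $a<c<b$ and on $F^i_\nu\subset\Delta_i$; one also has to verify that the Lipschitz constant $1/c<1/a$ of $\partial\Omega_c$ is compatible with the derivation of \eqref{square01aav3-2}, which is handled exactly as in the earlier lemmas (if necessary, by restricting $c$ to a subinterval of $(a,b)$), so no genuinely new ideas enter.
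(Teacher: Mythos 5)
Your proposal is correct and is exactly the route the paper takes: the paper omits the proof of this lemma, stating that it follows the argument of Lemma \ref{LGL2} with \eqref{square01aav3-2} in place of \eqref{square01aav3} and without the averaging in $c$, which is precisely your reduction. Your accounting of how the hypothesis $\|\mu\|_{\mathcal C}^{1/2}\tilde{N}_b(\nabla u)\le\gamma\nu$ combines with the Carleson factor in \eqref{square01aav3-2} to give $|F^i_\nu|\lesssim\gamma^2|\Delta_i|$, and of the cone-inclusion geometry borrowed from Lemma \ref{LGL2}, matches the intended argument.
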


We omit the proof as it follows the same idea as the proof of Lemma \ref{LGL2} using \eqref{square01aav3-2} in place of \eqref{square01aav3}. Also averaging in $c$ is not needed. We also have an analogue of Lemma \ref{LocalGoodL} by the same argument. We record the consequences of these two results.

\begin{corollary}\label{S4:C2} Under the assumption of Lemma \ref{LGL2}, for any $q \geq 2$ and $a>0$ there exists a finite 
constant $C=C(\lambda_2,\Lambda,q,a,\|\mu\|_{\mathcal C},n)>0$ such that 
\begin{equation}\label{S3:C7:E00oo=sdd2}
\|S^R_{2,a}(\partial_0 u)\|_{L^{q}({\Delta_{R}})}  \le C\left[\|S^{R}_{2,a}(\nabla_T u)\|_{L^{q}(\Delta_{2R})}+\|\mu\|_{\mathcal{C}}^{1/2}\|\tilde{N}^{2R}_{2,a}(\nabla u)\|_{L^{q}({\Delta_{2R}})}\right],
\end{equation}
\begin{equation}\label{S3:C7:E00oo=s2}
\|S_{2,a}(\partial_0 u)\|_{L^{q}({\BBR}^{n-1})}\le C\left[\|S_{2,a}(\nabla_T u)\|_{L^{q}({\BBR}^{n-1})}+\|\mu\|_{\mathcal{C}}^{1/2}\|\tilde{N}_{2,a}(\nabla u)\|_{L^{q}({\BBR}^{n-1})}\right].
\end{equation}
The inequality \eqref{S3:C7:E00oo=s2} also holds for any $q > 0$, 
provided we know a priori that
\newline
$\|{\mathcal T}_{a}(\nabla u)\|_{L^{q}({\BBR}^{n-1})} < \infty.$
\end{corollary}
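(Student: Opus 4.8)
The plan is to prove Corollary \ref{S4:C2} in direct parallel with Corollary \ref{S4:C1}, substituting the auxiliary square function ${\mathcal T}_a(\nabla u)$ (which captures the coefficient-error contribution from expressing $\partial^2_{00}u$ in terms of lower-order and tangential derivatives) in the role that was previously played by the $p$-adapted square function. First I would record the pointwise bound \eqref{zertex1}, which reduces estimating $S^R_{2,a}(\partial_0 u)$ to estimating $S^R_{2,a}(\nabla_T u)$ (already handled by Corollary \ref{S4:C1} with $p=q=2$) plus $C\,{\mathcal T}^R_a(\nabla u)$. So it suffices to bound the $L^q(\Delta_R)$ norm of ${\mathcal T}^R_a(\nabla u)$ by $\|\mu\|_{\mathcal C}^{1/2}\|\tilde{N}^{2R}_{2,a}(\nabla u)\|_{L^q(\Delta_{2R})}$, and then the global version by taking $R\to\infty$.

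The main engine is the good-$\lambda$ inequality of Lemma \ref{LGL} (and its localized analogue, obtained by the same argument as Lemma \ref{LocalGoodL}), which is built on the Carleson-type estimate \eqref{square01aav3-2}. Here the averaging over the parameter $c$ that was needed in Lemma \ref{LGL2} is not required, since \eqref{square01aav3-2} has no boundary terms on $\partial\Omega_g$ involving $\nabla\partial_k u$: the integrand $(|\nabla A|^2+|B|^2)|\nabla u|^2 x_0$ is already a harmless solid integrand controlled directly by the Carleson condition via Theorem \ref{T:Car}, so the four extra boundary terms that complicated the proof of Lemma \ref{LGL2} simply do not appear. Thus the stopping-time/Whitney-decomposition argument runs more cleanly. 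Once the good-$\lambda$ inequality is in hand, I would run the standard real-variable argument: multiply \eqref{eq:gl2v2} by $\nu^{q-1}$, integrate in $\nu$ over $(0,\infty)$ for the global statement (or over $(\nu_0,M)$ and $(0,\nu_0)$ separately, letting $M\to\infty$, for the localized one using the finiteness guaranteed by smoothness of coefficients together with \eqref{eq-tourm}), and absorb, using that $C(\gamma)\to 0$ as $\gamma\to 0$ to defeat the $\tilde N$-term with a small constant. The factor $\|\mu\|_{\mathcal C}^{1/2}$ appears precisely because the hypothesis side of \eqref{eq:gl2v2} involves $\|\mu\|_{\mathcal C}^{1/2}\tilde N_b(\nabla u)$, matching the right-hand side of the claimed estimate.

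For the $q\geq 2$ range, everything proceeds via the localized good-$\lambda$ inequality exactly as in Corollary \ref{S4:C1}, then $R\to\infty$ gives \eqref{S3:C7:E00oo=s2}; for $0<q<2$ the localized good-$\lambda$ inequality is unavailable, so I would instead use the global Lemma \ref{LGL} directly under the a priori assumption $\|{\mathcal T}_a(\nabla u)\|_{L^q({\BBR}^{n-1})}<\infty$, which legitimizes the integration-in-$\nu$ step. Combining the bound for ${\mathcal T}^R_a(\nabla u)$ with Corollary \ref{S4:C1} applied to $S^R_{2,a}(\nabla_T u)$ and the triangle inequality in $L^q$ then yields \eqref{S3:C7:E00oo=sdd2} and \eqref{S3:C7:E00oo=s2}. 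The only mild subtlety — and the step I expect to need the most care — is verifying that the localized analogue of Lemma \ref{LocalGoodL} genuinely goes through for ${\mathcal T}$ with the correct threshold $\nu_0$ defined in terms of $\|N^{2R}_b(\nabla u)\|_{L^q}$: one needs the analogue of \eqref{eq-tourm}, namely $\|{\mathcal T}^R_b(\nabla u)\|_{L^q(\Delta_R)}\lesssim \|\mu\|_{\mathcal C}^{1/2}\|N^{2R}_b(\nabla u)\|_{L^q(\Delta_{2R})}$, which again follows from \eqref{square01aav3-2} by the same averaging, and then the Whitney-decomposition step forcing $|\Delta_R\cap\{{\mathcal T}^R_b>\nu/2\}|<\tfrac12|\Delta_R|$ for $\nu>\nu_0$ is identical to before.
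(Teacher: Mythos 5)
Your proposal follows essentially the same route as the paper: the pointwise bound \eqref{zertex1} reduces matters to the auxiliary functional ${\mathcal T}_a(\nabla u)$, which is controlled by $\|\mu\|_{\mathcal C}^{1/2}\tilde{N}(\nabla u)$ through the good-$\lambda$ inequality of Lemma \ref{LGL} and its local analogue built on \eqref{square01aav3-2} (including your correct observation that the averaging in $c$ and the extra boundary terms are not needed here), followed by the standard integration in $\nu$, change of aperture, and the $R\to\infty$ limit, with the global lemma plus the a priori finiteness of $\|{\mathcal T}_a(\nabla u)\|_{L^q}$ handling $q<2$. One minor remark: invoking Corollary \ref{S4:C1} for the term $S^R_{2,a}(\nabla_T u)$ is unnecessary and, if used to replace that term by $\tilde{N}^{2R}_{2,a}(\nabla u)$, would actually weaken \eqref{S3:C7:E00oo=sdd2} by losing the $\|\mu\|_{\mathcal C}^{1/2}$ smallness structure needed later; the tangential square function should simply be kept on the right-hand side as stated.
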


We are now ready to establish a local solvability result. Let us consider domains of the following the form. Let $\Delta_d\subset{\mathbb R}^{n-1}$
be a boundary ball or a cube of diameter $d$. We denote by ${\mathcal O}_{\Delta_d,a}$ 
\begin{equation}\label{Odom}
{\mathcal O}_{\Delta_d,a}=\bigcup_{x'\in\Delta_d}\Gamma_a(x').
\end{equation}
Here as before $\Gamma_a(x')$ denotes the nontangential region with aperture $a$ at a point $x'$ (c.f. Definition  \ref{DEF-1}).

Clearly,  a domain such as \eqref{Odom} is a domain with Lipschitz constant $1/a$. It follows that if ${\mathcal L}$ satisfies assumptions of this Theorem \ref{S3:T0} on ${\mathbb R}^{n}_+$ it also satisfies it on any domain ${\mathcal O}_{\Delta_d,a}$, provided $1/a$ is sufficiently small. This can be seen via the pullback transformation \eqref{E:rho} which transforms the problem from ${\mathcal O}_{\Delta_d,a}$ back to ${\mathbb R}^n_+$. This modifies the coefficients of our PDE to say
\begin{equation}\label{eq-pf14}
\mbox{div}(\bar{A}\nabla v)=0.
\end{equation}

In particular, if the original PDE on ${\mathcal O}_{\Delta_d,a}$ satisfies $A_{00}=1$ and $A_{0j}$ are real, the modified coefficients $\bar{A}$ will fail to do so. However, we could fix that via the change of coefficients discussed in \eqref{eqSWAP} together with the observations noted below. It follows that \eqref{eq-pf14} can be rewritten as
\begin{equation}\label{eq-pf15}
\mbox{div}(\tilde{A}\nabla v)+\tilde{B}\cdot \nabla v=0.
\end{equation}
Because $1/a$ is small the coefficient $\bar{A}_{00}$ is close to $1$ and $\bar{A}_{0j}$ are almost real. It follows that rewriting \eqref{eq-pf14} as \eqref{eq-pf15} will not destroy the ellipticity and $p$-ellipticity of the matrix $\tilde{A}$. Hence our previous results of this section apply as they were developed for operators ${\mathcal L}$ with first order (drift) terms. 

We note that in section \ref{SS:43}, drift terms are not allowed, but the results of this section can be applied to the PDE \eqref{eq-pf14} because the special assumptions on $\bar{A}_{0j}$ are not used there.

To discuss solvability on domain ${\mathcal O}_{\Delta_d,a}$ we need to consider the nontangential maximal function $\tilde N$ that is taken with respect to nontangential approach regions that are contained inside ${\mathcal O}_{\Delta_d,a}$; that is we need to take regions $\Gamma_b(\cdot)$ for any $b<a$. Without loss of generality we choose $b=a/2$ and fix it for the remaining part of this section. Finally, $\nabla_T u$ at the boundary of ${\mathcal O}_{\Delta_d,a}$ is understood to be the tangential component of the gradient with respect to the boundary of this domain.
  
For ease of notation we drop the dependence of the domain ${\mathcal O}_{\Delta_d,a}$ on $\Delta_d$ and $a$ and use ${\mathcal O}={\mathcal O}_{\Delta_d,a}$. We have the following result.

\begin{lemma}\label{S6:L1} Let $\mathcal L$ be as in Theorem \ref{S3:T0} on the domain ${\mathbb R}^n_{+}$ and let $A$ be $q$-elliptic for some $q\ge 2$.
Let $\mathcal O$ be a Lipschitz domain as above and assume $u$ is an arbitrary energy solution of ${\mathcal L}u=0$ in ${\mathbb R}^n_+$
with the Dirichlet boundary datum $\nabla_T f\in L^{q}(\partial \mathcal O;{\BBR}^N)$. Then there exists $m=m(a)>1$ and $K=K(\lambda_p,\Lambda,n,p)>0$ such that if
$$\|\mu\|_{\mathcal C}+a^{-1}<K,$$
the following estimate holds:
\begin{equation}\label{Main-Estlocxx}
\|\tilde{N}_{a/2} (\nabla u)\|_{L^{q}(\Delta_d)}\leq C_q\|\nabla_T f\|_{L^{q}(\partial{\mathcal O\cap \overline{T(\Delta_{md})}})}+C_qd^{(n-1)/q}\sup_{x\in \mathcal O\cap\{\delta(x)>d\}}W_2(x),
\end{equation}
where $\delta(x)=\mbox{dist}(x,\partial{\mathbb R}^n_+)$ and $W_2(x)=\left(\dint_{B_{\delta(x)/4}(x)}|\nabla u(y)|^2 dy)\right)^{1/2}$. 
\end{lemma}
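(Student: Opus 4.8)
The plan is to combine the nontangential-maximal-by-square-function bound of Proposition \ref{S3:C7} with the square-function-by-nontangential-maximal-function bound of Corollaries \ref{S4:C1} and \ref{S4:C2}, on the Lipschitz domain $\mathcal O$ obtained by pulling back via the map \eqref{E:rho}. First I would record the self-improving consequence of combining these two estimates: Proposition \ref{S3:C7} (applied on $\mathcal O$, which is legitimate since $1/a$ is small so the transformed coefficients of \eqref{eq-pf14}–\eqref{eq-pf15} still satisfy small-Carleson and $q$-ellipticity, as explained in the paragraph preceding the lemma) gives
\begin{equation}\label{pf-SL1-a}
\|\tilde N^{r}_{a/2}(\nabla u)\|_{L^{q}(\Delta_d)}\le C\|S^{2r}_{a/2}(\nabla u)\|_{L^{q}(\Delta_{md})}+Cd^{(n-1)/q}|\widetilde{\nabla u}(A_d)|,
\end{equation}
while Corollary \ref{S4:C1} bounds the square function of the tangential derivatives, and Corollary \ref{S4:C2} bounds $S_{2,a}(\partial_0 u)$ by $S_{2,a}(\nabla_T u)$ plus $\|\mu\|_{\mathcal C}^{1/2}\tilde N_{2,a}(\nabla u)$. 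Chaining these, the full square function $S^{2r}_{a/2}(\nabla u)$ on the right of \eqref{pf-SL1-a} is controlled by $\|\tilde N^{Cr}_{a/2}(\nabla u)\|_{L^{q}(\Delta_{Cmd})}$ with a small constant times $\|\mu\|_{\mathcal C}^{1/2}$ from the $\partial_0$ contribution, plus the boundary datum $\|\nabla_T f\|_{L^q(\partial\mathcal O\cap\overline{T(\Delta_{md})})}$, which is exactly what appears after integrating \eqref{square01aav3}/\eqref{square01aav3-2} against the Whitney decomposition (this is the mechanism of Lemma \ref{LGL2} and its local variant Lemma \ref{LocalGoodL}).

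The key structural point is a hidden-absorption argument. The appearances of $\tilde N_{a/2}(\nabla u)$ on the right-hand side come with a prefactor that is $O(\|\mu\|_{\mathcal C}^{1/2}+a^{-1/2})$ (the $a^{-1/2}$ is the contribution of the transformed coefficients' Carleson norm, which by the discussion around \eqref{CarbarA} is controlled by $\|\mu\|_{\mathcal C}$ plus the Lipschitz norm $1/a$ of the graph defining $\mathcal O$). Hence, choosing $K$ small enough that $\|\mu\|_{\mathcal C}+a^{-1}<K$ forces this prefactor below $1/2$, one absorbs the $\tilde N_{a/2}(\nabla u)$ terms into the left-hand side. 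This step requires the a priori finiteness of $\|\tilde N_{a/2}(\nabla u)\|_{L^q(\Delta_d)}$; since $u$ is an energy solution and $\Delta_d$ is a bounded ball, the truncated maximal function $\tilde N^{r}_{a/2}(\nabla u)$ on $\Delta_d$ is finite (indeed $\nabla u\in L^2(\mathbb R^n_+;\mathbb C^n)$ and one uses \eqref{NN} together with Proposition \ref{Regularity} to pass to the $L^q$-averaged version), so the absorption is rigorous rather than formal.

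It then remains to identify the leftover term $d^{(n-1)/q}|\widetilde{\nabla u}(A_d)|$ from \eqref{pf-SL1-a} with the claimed $d^{(n-1)/q}\sup_{x\in\mathcal O\cap\{\delta(x)>d\}}W_2(x)$. Here $\widetilde{\nabla u}(A_d)$ is the average of $\nabla u$ over $B_{\delta(A_d)/2}(A_d)$ at the corkscrew point $A_d$, and by Cauchy–Schwarz this is dominated by the $L^2$ average $W_2$ at a point of height $\approx d$; since $A_d$ can be taken with $\delta(A_d)\approx d$, it is bounded by the stated supremum. (If one wants to be careful, one first proves \eqref{Main-Estlocxx} with $W_2$ evaluated at the specific corkscrew point $A_d$ and then dominates by the supremum over $\mathcal O\cap\{\delta(x)>d\}$; this is harmless.) Finally, the datum term: on $\partial\mathcal O$ the tangential gradient of the boundary trace controls the boundary values of $\nabla_T u$ that enter \eqref{square01aav3}, so $\|S_{q,a}(\nabla_T u)\|_{L^q}$ — and therefore, after the chain above, the whole right-hand side — is bounded by $\|\nabla_T f\|_{L^q(\partial\mathcal O\cap\overline{T(\Delta_{md})})}$ up to the already-absorbed error terms. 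Collecting the surviving terms gives \eqref{Main-Estlocxx}.

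The main obstacle I anticipate is the bookkeeping of the various apertures, truncation heights, and enlargement constants as one passes back and forth between $\mathcal O$ and $\mathbb R^n_+$ via \eqref{E:rho}, and in particular verifying that the smallness of $1/a$ genuinely makes the transformed Carleson norm (and the deviation of $\bar A_{00}$ from $1$, $\bar A_{0j}$ from real) small enough that all the cited results — which were stated for operators on $\mathbb R^n_+$ with the normalized structure — apply on $\mathcal O$ with uniform constants. Once that reduction is granted, the estimate is a routine concatenation of the inequalities already proved in Sections \ref{SS:43} and \ref{S4} followed by the absorption argument.
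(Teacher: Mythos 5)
Your overall strategy (chain Proposition \ref{S3:C7} with the square-function estimates of Section \ref{S4} after the pullback normalization, then absorb using smallness of $\|\mu\|_{\mathcal C}+a^{-1}$) is the same as the paper's, but two steps you treat as routine are exactly where the work lies, and as written your chain does not close. First, what Proposition \ref{S3:C7} and Corollary \ref{S4:C2} produce is the ordinary ($2$-adapted) square function $S_{2,a}(\nabla_T u)$, while the estimate that actually delivers the boundary datum, \eqref{square02aa-loc}, controls the $q$-adapted quantity $\|S_{q,a}(\nabla_T u)\|_{L^q}^q$. Corollary \ref{S4:C1}, which you invoke for the tangential part, bounds $S_{p,a}(\nabla_T u)$ by $\tilde N_{p,a}(\nabla u)$ with a constant that is \emph{not} small, so chaining it directly gives $\tilde N\lesssim C\,\tilde N$ and nothing can be absorbed. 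The paper bridges the two square functions with the H\"older interpolation \eqref{eq-pf3a}--\eqref{eq-qq'}, $S_{2,a}(\nabla_T u)\le C_\varepsilon S_{q,a}(\nabla_T u)+\varepsilon S_{q',a}(\nabla_T u)$, applies Corollary \ref{S4:C1} only to the $q'$-adapted term (legitimate since $q\ge q'$) and chooses $\varepsilon^q=\|\mu\|_{\mathcal C}^{q/2}$; only after this do all $\tilde N$ contributions over the central ball carry small prefactors. Your proposal omits this step, and also misattributes the source of $\nabla_T f$: it enters through the term $\int_{2\Delta}|\nabla_T u|^q(0,x')\,dx'$ in \eqref{square02aa-loc}, not through the sawtooth estimates \eqref{square01aav3}--\eqref{square01aav3-2} or the good-$\lambda$ Lemmas \ref{LGL2}, \ref{LocalGoodL}, whose conclusions go in the opposite direction ($S\lesssim\tilde N$).

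Second, your claim that every occurrence of $\tilde N_{a/2}(\nabla u)$ on the right comes with a prefactor $O(\|\mu\|_{\mathcal C}^{1/2}+a^{-1/2})$ is false: the cutoff terms $IV$, $II_2$, $II_3$ behind \eqref{square02aa-loc} produce $C\int_{2\Delta\setminus\Delta}\big[\tilde N^{r}_{q,a}(\nabla u)\big]^{q}\,dx'$ with an $O(1)$ constant, supported on an annulus disjoint from the ball on the left-hand side, so it can be neither absorbed nor made small by shrinking $K$. This is not a bookkeeping nuisance; it is the reason the lemma's statement contains $\sup_{x\in\mathcal O\cap\{\delta(x)>d\}}W_2(x)$ at all, and not merely to dominate the corkscrew average $|\widetilde{\nabla u}(A_d)|$ as you suggest. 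The paper disposes of it in \eqref{altsim4}--\eqref{Main-Estlocxx-brb}: restate the estimate at the slightly larger ball $\Delta_{(1+a)d}$, push it forward to $\mathcal O$, and observe that over the annulus the boundary $\partial\mathcal O$ sits at height $\gtrsim d$ inside $\mathbb R^n_+$, so by the localized Carleson estimate \eqref{Ca-222-x} the maximal function there is dominated pointwise by $\sup_{\mathcal O\cap\{\delta>d\}}W_2$; the same observation removes the truncation on the left. Without an argument of this kind (or some substitute you do not propose), your absorption scheme does not yield \eqref{Main-Estlocxx}. The remaining ingredients of your outline --- the pullback bookkeeping, the treatment of the corkscrew term, the a priori finiteness caveat --- are consistent with the paper.
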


\begin{proof} In last term of \eqref{Main-Estlocxx} because of the way $\mathcal O$ is defined we clearly have
\begin{equation}\label{brmbrm}
\{(x_0,x')\in\mathcal O:\, x'\notin\Delta_{(1+a)d}\}\subset  \mathcal O\cap\{\delta(x)>d\}.
\end{equation}
If follows that by considering the pull-back map $\rho:{\mathbb R}^n_+\to\mathcal O$ defined in \eqref{E:rho} proving \eqref{Main-Estlocxx} is equivalent to establishing
\begin{equation}\label{Main-Estlocxxy}
\|\tilde{N} (\nabla u)\|_{L^{q}(\Delta_d)}\leq C\|\nabla_T f\|_{L^{q}(\Delta_{md};{\BBR}^N)}+Cd^{(n-1)/q}\sup_{x\in {\mathbb R}^n_+\setminus T(\Delta_{(1+a)d})}W_2(x),
\end{equation}
where we now work on the domain ${\mathbb R}^n_+$ with $u$ solving $\mathcal Lu=0$ in ${\mathbb R}^n_+$ for $\mathcal L$ as in Theorem \ref{S3:T0}. We start with the term on the lefthand side of \eqref{Main-Estlocxxy}. If follows from \eqref{S3:C7:E00ooloc} that for some $m_1>1+a$

\begin{equation}\label{S3:C7:E00ooloc-2}
\|\tilde{N}^{(1+a)d}_a(\nabla u)\|^q_{L^{q}(\Delta_d)}\le C\|S^{m_1d}_a(\nabla u)\|^q_{L^{q}(\Delta_{m_1d})}+Cd^{n-1}|\widetilde{\nabla u}(A_d)|^q.
\end{equation}
The last term above has a trivial bound by $Cd^{n-1}\sup_{x\in {\mathbb R}^n_+\setminus T(\Delta_{(1+a)d})}[W_2(x)]^q$.

By \eqref{S3:C7:E00oo=sdd2} we have for $m_2=2m_1$:
\begin{equation}\nonumber
\|S^{m_1d}_{a}(\nabla u)\|_{L^{q}({\Delta_{m_1d}})}  \le C\left[\|S^{m_2d}_{a}(\nabla_T u)\|_{L^{q}(\Delta_{m_2d})}+\|\mu\|_{\mathcal{C}}^{1/2}\|\tilde{N}^{m_2d}_{a}(\nabla u)\|_{L^{q}({\Delta_{m_2d}})}\right].
\end{equation}

Using the H\"older inequality we have for any $x'\in{\mathbb R}^{n-1}$
\begin{align}
&\hskip-5mm\left[S_{2,a}(\nabla_T u)(x')\right]^2=\sum_{k>0}\iint_{\Gamma_a(x')}|\nabla\partial_k u|^{2/q}|\partial_k u|^{1-2/q}
|\nabla\partial_k u|^{2/q'}|\partial_k u|^{1-2/q'}x_0\,dx'\,dx_0\nonumber\\
&\le \sum_{k>0}\left(\iint_{\Gamma_a(x')}|\nabla\partial_k u|^2|\partial_k u|^{q-2}
x_0\,dx'\,dx_0\right)^{1/q}\times\nonumber\\
&\hskip3cm\left(\iint_{\Gamma_a(x')}|\nabla\partial_k u|^2|\partial_k u|^{q'-2}
x_0\,dx'\,dx_0\right)^{1/q'}\label{eq-pf3a}\\
\nonumber &\le S_{2,q}(\nabla_T u)(x')S_{2,q'}(\nabla_T u)(x').
\end{align}

Hence the previous line implies that for any $\varepsilon>0$ we have
\begin{equation}\label{eq-qq'}
S_{2,a}(\nabla_T u)(x')\le C_\varepsilon S_{q,a}(\nabla_T u)(x')+\varepsilon S_{q',a}(\nabla_T u)(x'),
\end{equation}
and the same inequality holds for the truncated square functions.
Observe that $q\ge q'$ and hence we can use  \eqref{S3:C7:E00oo=sdd} to estimate the second term.
This gives us
\begin{align}\nonumber
\|S^{m_2d}_a(\nabla_T u)\|^q_{L^q(\Delta_{m_2d})}\le C_\varepsilon\|S^{m_2d}_{q,a}(\nabla_T u)\|^q_{L^q(\Delta_{m_2d})}+
\varepsilon^q\|\tilde{N}^{m_3d}(\nabla u)\|^q_{L^{q}(\Delta_{m_3d})}
\end{align}

For some $m_3>m_2$. We choose $\varepsilon$ so that $\varepsilon^q= \|\mu\|^{q/2}_{\mathcal C}$. 
The estimates we have so far can be combined to the following estimate:
\begin{align}\label{eq-summaryest}
\|\tilde{N}^{(1+a)d}_a(\nabla u)\|^q_{L^{q}(\Delta_d)}&\le C\|S^{m_2d}_{q,a}(\nabla_T u)\|^q_{L^q(\Delta_{m_2d})}\\&\nonumber+C\|\mu\|_{\mathcal{C}}^{q/2}\|\tilde{N}^{m_3d}_{a}(\nabla u)\|^q_{L^{q}({\Delta_{m_3d}})}\\&\nonumber+
Cd^{n-1}\sup_{x\in {\mathbb R}^n_+\setminus T(\Delta_{(1+a)d})}[W_2(x)]^q.
\end{align}
To estimate the first term on the righthand side we use \eqref{square02aa-loc}. This gives
\begin{eqnarray}\label{altsim}
&&\|S^{m_2d}_{q,a}(\nabla_T u)\|^q_{L^q(\Delta_{m_2d})}\\
&\lesssim& 
\int_{\Delta_{m_3d}}|\nabla_T u(0,x')|^{q}\,dx' 
+\int_{\Delta_{m_3d}}|\nabla_T u(m_3d,x')|^{q}\,dx'\nonumber\\
&&\quad+\|\mu\|_{\mathcal{C}}\int_{\Delta_{m_3d}}\left[\tilde{N}^{m_3d}(\nabla u)\right]^{q}\,dx'+C\int_{\Delta_{m_3d}\setminus\Delta_{m_2d}}\left[\tilde{N}^{m_3d}(\nabla u)\right]^{p}\,dx'.\nonumber
\end{eqnarray}

Observe that if the estimate above holds for certain $m_3>1$ it will certainly holds for any larger value, say $2m_3$. Hence we can average the estimate on the righthand side of \eqref{square02aa-loc} between $m_3$ and $2m_3$. This turns the second term on the righthand side of  \eqref{square02aa-loc}
 into a solid integral over a set that is contained in ${\mathbb R}^n_+\setminus T(\Delta_{(1+a)d})$
and therefore bounded by $Cd^{n-1}\sup_{x\in {\mathbb R}^n_+\setminus T(\Delta_{(1+a)d})}[W_2(x)]^q$. Hence we have for $m_4=2m_3$ thanks to \eqref{eq-summaryest}:

\begin{eqnarray}\label{altsim3}
&&\|\tilde{N}^{(1+a)d}(\nabla u)\|^q_{L^{q}(\Delta_d)}\lesssim 
\int_{\Delta_{m_4d}}|\nabla_T f(x')|^{q}\,dx' \\
&&\quad+\max\{\|\mu\|_{\mathcal{C}},\|\mu\|_{\mathcal{C}}^{q/2}\}\int_{\Delta_{m_4d}}\left[\tilde{N}^{(1+a)d}(\nabla u)\right]^{q}\,dx'\nonumber\\
&&\quad\nonumber
+C\int_{\Delta_{m_4d}\setminus\Delta_{m_2d}}\left[\tilde{N}^{(1+a)d}(\nabla u)\right]^{p}\,dx'\\
&&\quad
+d^{n-1}\sup_{x\in {\mathbb R}^n_+\setminus T(\Delta_{(1+a)d})}[W_2(x)]^q.\nonumber
\end{eqnarray}
Here we truncated $\tilde N$ on the righthand side at the height $(1+a)d$ instead of $m_4d$ since everything above this height
can be incorporated into the last term.

Clearly, for sufficiently small $\|\mu\|_{\mathcal{C}}$ we can hide part of the second term in the last line on the righthand side of \eqref{altsim4a}. Hence
\begin{eqnarray}\label{altsim4}
&&\|\tilde{N}^{(1+a)d}(\nabla u)\|^q_{L^{q}(\Delta_{d})} \lesssim 
\int_{\Delta_{m_4d}}|\nabla_T f(x')|^{q}\,dx' \\
&+&C\|\tilde{N}^{(1+a)d}(\nabla u)\|^q_{L^{q}(\Delta_{m_4d}\setminus\Delta_{d})}
+d^{n-1}\sup_{x\in {\mathbb R}^n_+\setminus T(\Delta_{(1+a)d}))}[W_2(x)]^q.\nonumber
\end{eqnarray}

Clearly, the last estimate is scale invariant and so we write it instead for an enlarged ball $\Delta_{(1+d)d}$. We do this to have in the second term $\Delta_{m_5d}\setminus\Delta_{(1+a)d}$ where $m_5=(1+a)m_4$. Since $\|\tilde{N}^{(1+a)d}(\nabla u)\|_{L^{q}(\Delta_{d})}\le \|\tilde{N}^{(1+a)d}(\nabla u)\|_{L^{q}(\Delta_{(1+a)d})}$ this gives us:
\begin{eqnarray}\label{altsim4a}
&&\|\tilde{N}^{(1+a)d}(\nabla u)\|^q_{L^{q}(\Delta_{d})} \lesssim 
\int_{\Delta_{m_5d}}|\nabla_T f(x')|^{q}\,dx' \\
&+&C\|\tilde{N}^{(1+a)d}(\nabla u)\|^q_{L^{q}(\Delta_{m_5d}\setminus\Delta_{(1+a)d})}
+d^{n-1}\sup_{x\in {\mathbb R}^n_+\setminus T(\Delta_{(1+a)d}))}[W_2(x)]^q.\nonumber
\end{eqnarray}

We now push-forward \eqref{altsim4a} back to the original domain ${\mathcal O}$. We have

\begin{align}\nonumber
\|\tilde{N}^{(1+a)d}_{a/2} (\nabla u)\|^q_{L^{q}(\Delta_d)}&\leq C\|\nabla_T f\|^q_{L^{q}(\partial{\mathcal O\cap \overline{T(\Delta_{m_5d})}})}+Cd^{n-1}\sup_{x\in \mathcal O\cap\{\delta(x)>d\}}W_2(x)^q\\\label{Main-Estlocxx-brb}
&\quad +C\|\tilde{N}^{(1+a)d}(\nabla u)\|^q_{L^{q}({\partial \mathcal O}\cap [T(\Delta_{m_5d})\setminus T(\Delta_{(1+a)d})])}.
\end{align}
We would like to hide the last term. Observe that
all points of ${\partial O}\cap[T(\Delta_{m_5d})\setminus T(\Delta_{(1+a)d})]$ are in the interior of the original domain ${\mathbb R}^n_+$ of distance at least $d$ away from the boundary of ${\mathbb R}^n_+$. Hence whenever we were applying the Theorem \ref{T:Car} we could have in fact used \eqref{Ca-222-x} there with $h$ being the function describing the boundary of $\mathcal O$. Since pointwise for $Q\in \partial \mathcal O\cap [T(\Delta_{m_5d})\setminus T(\Delta_{(1+a)d})]$
$$\tilde{N}_{a,h}(\nabla u)(Q)\le \sup_{x\in \mathcal O\cap\{\delta(x)>d\}}W_2(x)$$
the last term can be estimated by
$Cd^{n-1}\sup_{x\in \mathcal O\cap\{\delta(x)>d\}}W_2(x)^q$ as well.

Finally, we can remove the truncation of $\tilde N$ at height $(1+a)d$ on the lefthhand side of \eqref{Main-Estlocxx-brb} as for points above this height again the term $Cd^{n-1}\sup_{x\in \mathcal O\cap\{\delta(x)>d\}}W_2(x)^q$ controls the nontangential maximal function. This establishes our claim.
\end{proof}

\section{Proof of Theorem \ref{S3:T0}.}\label{S5}

We will establish the solvability of the Regularity problem assuming that the coefficients of $A$ and $B$ are smooth, applying the results of the previous two sections. The constants in the estimates will not depend on the degree of smoothness. Then, considering smooth approximations of ${\mathcal L}$, a limiting argument gives Theorem \ref{S3:T0} in the general case.

We start with $p=2$. Assume the matrix $A$ is $2$-elliptic.
It follows that Lemma \ref{S6:L1} applies. For any $K$ as in the Lemma for any $\|\mu\|_{\mathcal C}<K$ we pick $a$ such that $\|\mu\|_{\mathcal C}+a^{-1}<K$.

Consider any $f\in L^2(\partial\mathbb R^n_+)\cap \dot{B}^{2,2}_{1/2}(\partial\mathbb R^n_+)$ and let $u\in\dot{W}^{1,2}(\mathbb R^n_+)$ be the unique energy solution of $\mathcal Lu=0$ with boundary datum $f$. We shall additionally assume the $f$ is a smooth compactly supported function, it suffices to establish our estimates for those as such functions form a dense subset of $L^2(\partial\mathbb R^n_+)\cap \dot{B}^{2,2}_{1/2}(\partial\mathbb R^n_+)$.

Fix $d>0$  and consider $\Delta=\Delta_d(0)$.  We apply Lemma  \ref{S6:L1}  to the domains 
${\mathcal O}_\tau={\mathcal O}_{\tau\Delta,a}$, for $\tau\in [1,2]$. This gives us
\begin{equation}\label{Main-Estlocxx2}
\|\tilde{N}_{a/2} (\nabla u)\|^2_{L^{2}(\Delta)}\leq C\|\nabla_T f\|^2_{L^{2}(\partial{\mathcal O_\tau\cap \overline{T(\tau m\Delta)}})}+Cd^{n-1}\sup_{x\in {\mathcal O}_\tau \cap\{\delta(x)>d\}}W_2(x)^2.
\end{equation}
Note that each of the sets $\partial{\mathcal O_\tau\cap \overline{T(\tau m\Delta)}}$ consists of the 
\lq\lq flat piece" that is just $\tau\Delta=\Delta_{\tau d}(0)$ and the remaining curve that lies inside $\mathbb R^n_+$. If we average the above inequality over all values of $\tau\in [1,2]$ the latter turns into a solid integral over a set that is contained in
$${\mathcal S}_d:=(0,2md)\times (\Delta_{2md}\setminus \Delta_d).$$
It follows that
\begin{align}\label{Main-Estlocxx2-av}
\|\tilde{N}_{a/2} (\nabla u)\|^2_{L^{2}(\Delta)}\leq& C\|\nabla_T f\|^2_{L^{2}(2\Delta)}+Cd^{n-1}\sup_{\{x:\,\delta(x)>d\}}W_2(x)^2\\
&+Cd^{-1}\iint_{\mathcal S_d}|\nabla u|^2 dx.\nonumber
\end{align}

Consider what happens as we take $d\to\infty$ in the estimate above. Recall that we know that
$\nabla u\in L^2(\mathbb R^n_+)$ from the fact that $u$ is an energy solution. This information implies
that both 
$$\iint_{B(x,\delta(x)/2)}|\nabla u|^2\,dx\to 0,\qquad \iint_{\mathcal S_d}|\nabla u|^2 dx\to 0,$$
for all $x\in \{x:\delta(x)>d\}$ uniformly as $d\to\infty$. From this however we see that the last two terms of \eqref{Main-Estlocxx2-av} go to zero as $d\to\infty$ and hence in the limit we have
$$\|\tilde{N}_{a/2} (\nabla u)\|^2_{L^{2}(\partial\mathbb R^n_+)}\leq C\|\nabla_T f\|^2_{L^{2}(\partial\mathbb R^n_+)},$$
which is $L^2$ solvability of the Regularity problem. Also observe that constant $C$ in the estimate above only depends on $\lambda_2,\, \Lambda$ and  $n$, precisely as stated in Theorem \ref{S3:T0}.

We now extrapolate. It has been established in \cite{DHM}  that, from Lemma \ref{S6:L1}, a purely real variable argument can be used to establish the following estimate
\begin{equation}\label{Main-Estloc5}
\int_{E_\nu\cap\{g\le\nu\}}\left[\tilde{N} (\nabla u)(x')\right]^2\,dx'\le C_\alpha\nu^2|E_\nu| +C\alpha^{-1}\int_{E_\nu}\left[\tilde{N} (\nabla u)(x')\right]^2\,dx',
\end{equation}
where $E_\nu=\{x'\in\mathbb R^{n-1}_+:\,\tilde{N}_\alpha (\nabla u)(x')>\nu \}$ and 
$$g(x')=\sup_{B\ni x'}\left(\dint_{B}|\nabla_Tf(y')|^2dy'\right)^{1/2}.$$
See in particular Lemma 6.1 and (6.17) of \cite{DHM} which are completely analogous to our Lemma \ref{S6:L1} \eqref{Main-Estloc5}. A consequence of \eqref{Main-Estloc5} is an existence of
$\delta>0$ which only depends on the constant in the estimate \eqref{Main-Estlocxx} such that
\begin{equation}\label{goodgrief}
\|\tilde{N}_{a/2} (\nabla u)\|_{L^{2+\delta}(\partial\mathbb R^n_+)}\leq C\|\nabla_T f\|_{L^{2+\delta}(\partial\mathbb R^n_+)},
\end{equation}
which is the solvability of the Regularity problem for $p_0=2+\delta$. 
If $p_0$ is such that the matrix $A$ is $p_0$-elliptic we can repeat the process above we did for $p=2$. We now apply Lemma  \ref{S6:L1} for the value $p_0$ and again take the limit $d\to\infty$.
This time the solid integrals we get are
$$d^{-1}\iint_{B(x,\delta(x)/2)}|\nabla u|^{p_0}\,dx,\qquad d^{-1}\iint_{\mathcal S_d}|\nabla u|^{p_0} dx,$$
which we know go to zero uniformly for all $x\in \{x:\delta(x)>d\}$ as $d\to\infty$ thanks to the fact that
\eqref{goodgrief} implies that $\|\tilde{N}_{p_0,a/2} (\nabla u)\|_{L^{p_0}}<\infty$. Hence taking the limit $d\to\infty$ in the analogue of \eqref{Main-Estlocxx2-av} for $p_0$ yields
\begin{equation}\label{goodgrief2}
\|\tilde{N}_{a/2} (\nabla u)\|_{L^{p_0}(\partial\mathbb R^n_+)}\leq C_{p_0}\|\nabla_T f\|_{L^{p_0}(\partial\mathbb R^n_+)}.
\end{equation}
This seemingly is just a restatement of \eqref{goodgrief}. The difference however is that now the constant $C_{p_0}$ in \eqref{goodgrief2} only depends on the constant in  Lemma \ref{S6:L1} for the value $p_0$. This allows us to extrapolate again and obtain solvability of the Regularity problem for some value $p_0+\delta'$. There is no 
difference in the structure of the argument. We can continue this bootstrapping as long as we stay in the range of $p$-ellipticity and as long as we can
be sure that we are moving by an amount $\delta'$ which is not getting smaller at each step. This last point is assured by the fact that the constants $C_{p_0}$ in the 
$L^{p_0}$ norm inequalities (\ref{goodgrief2}) only depend on the $p_0$-ellipticity and the Carleson measure norm of the coefficients. If we fix $p>2$ such that the operator
is $p$-elliptic the constants $C_q$ for $2\le q\le p$  in Lemma \ref{S6:L1} are uniformly bounded
which assures that our bootstrapping argument will reach the desired value $p$ is finitely many steps giving us solvability of the Regularity problem and the estimate \eqref{Main-Est} of Theorem \ref{S3:T0}.

We now deal with $p<2$ such that $A$ is $p$-elliptic. Assume first that we a priori know that $\|\tilde{N}_{2,a}(\nabla u)\|_{L^p(\mathbb R^{n-1})}<\infty$ for an energy solution $u$ in ${\mathbb R^n_+}$ with boundary datum $f$.
Then by \eqref{S3:C7:E00oo} of Proposition \ref{S3:C7} and by \eqref{S3:C7:E00oo=s2} of Corollary \ref{S4:C2} we have
\begin{align}
\|\tilde{N}_{2,a}(\nabla u)\|_{L^p(\mathbb R^{n-1})}&\le C\|S_{2,a}(\nabla u)\|_{L^p(\mathbb R^{n-1})}\\
&\le C\|S_{2,a}(\nabla_T u)\|_{L^p(\mathbb R^{n-1})}+C\|\mu\|^{1/2}\|\tilde{N}_{2,a}(\nabla u)\|_{L^p(\mathbb R^{n-1})}.\nonumber
\end{align}
Here in order to use Corollary \ref{S4:C2}, we must verify that $\|\mathcal{T}_{a}(\nabla u)\|_{L^p(\mathbb R^{n-1})}<\infty$. However under the assumption that the coefficients are smooth we have a pointwise bound $\mathcal{T}_{a}(\nabla u)(Q)\le S_{2,a}(u)(Q)$. We have established solvability of the $L^p$ Dirichlet problem in the paper \cite{DPcplx} in the range where $p$-ellipticity holds and in particular we have shown the bound $\|S_{2,a}(u)\|_{L^p(\mathbb R^{n-1})}\lesssim\|f\|_{L^p(\mathbb R^{n-1})}<\infty$ (using that $f\in C_0^\infty\subset L^p$).

Hence taking sufficiently small $K$ in Theorem \ref{S3:T0} it follows that
\begin{align}
\|\tilde{N}_{2,a}(\nabla u)\|_{L^p(\mathbb R^{n-1})}&\le  C\|S_{2,a}(\nabla_T u)\|_{L^p(\mathbb R^{n-1})}.\label{eq-77}
\end{align} 
Hence by \eqref{eq-qq'} in conjunction with \eqref{S3:C7:E00oo=s} of Corollary \ref{S4:C1} and \eqref{NN} implies that
\begin{align}\label{zertex4}
\|S_{2,a}(\nabla_T u)\|_{L^p(\mathbb R^{n-1})}\le C\|S_{p,a}(\nabla_T u)\|_{L^p(\mathbb R^{n-1})}+\varepsilon\|\tilde{N}_{2,a}(\nabla u)\|_{L^p(\mathbb R^{n-1})}.
\end{align} 
When applying  \eqref{S3:C7:E00oo=s} to estimate $\|S_{p',a}(\nabla_T u)\|_{L^p(\mathbb R^{n-1})}$
we need to know a priori that this quantity is finite. Here we use our assumption that for now the coefficients are smooth. This gives is a point-wise bound
$$S_{p',a}(\nabla_T u)\le CS_{2,a}(\nabla_T u)+CN(\nabla u),$$
where $N$ is the pointwise maximal function. The classical $L^\infty$ bounds of Agmon-Douglis-Nirenberg \cite{ADN} for smooth PDE systems  imply $N\lesssim N_2$. We also have 
$\|S_{2,a}(\nabla_T u)\|_{L^p}<\infty$ from a similar estimate
$$S_{2,a}(\nabla_T u)\le CS_{p,a}(\nabla_T u)+CN(\nabla u),$$
and finally we know that $\|S_{p,a}(\nabla_T u)\|_{L^p(\mathbb R^{n-1})}<\infty$ by \eqref{square02aa}
(taking $r\to\infty$). The one \lq\lq bad" term in \eqref{square02aa} which is $\int_{\mathbb R^{n-1}}|\nabla_T u(r,x')|^pdx'$ can be dealt with by averaging in $r$ first which turns it into a solid integral. Such term can be estimated by  
$\|\tilde{N}_{p,a}(\nabla u)\|_{L^p(\mathbb R^{n-1})}\lesssim \|\tilde{N}_{2,a}(\nabla u)\|_{L^p(\mathbb R^{n-1})}<\infty$ and furthermore it follows this term converges to zero as $r\to\infty$.

Hence all quantities appearing in \eqref{zertex4} are finite under the assumption our coefficients are smooth, but the constants in this estimate only depend on the parameters $n,p,\lambda_p,\Lambda$.
We choose $\varepsilon>0$ in this inequality  small enough so that we can hide this term on the lefthand side of 
\eqref{eq-77}. This gives is
\begin{align}
\|\tilde{N}_{2,a}(\nabla u)\|_{L^p(\mathbb R^{n-1})}&\le  C\|S_{p,a}(\nabla_T u)\|_{L^p(\mathbb R^{n-1})}.\label{eq-78}
\end{align} 
We can now use again \eqref{square02aa} for $S_{p,a}(\nabla_T u)$ taking $r\to\infty$. As explained above  the term $\int_{\mathbb R^{n-1}}|\nabla_T u(r,x')|^pdx'$  gets eliminated. It follows that  \eqref{square02aa} gives us
\begin{align}
\|S_{p,a}(\nabla_T u)\|_{L^p(\mathbb R^{n-1})}&\le C\|\nabla_T f\|_{L^p(\mathbb R^{n-1})}+C\|\mu\|_{\mathcal C}^{1/p}\|\tilde{N}_{p,a}(\nabla u)\|_{L^p(\mathbb R^{n-1})}.\label{eq-79}
\end{align} 
Hence for all $\|\mu\|_{\mathcal C}<K$ sufficiently small combination of \eqref{eq-78}, \eqref{eq-79}
and \eqref{NN} yields
\begin{align}\label{goodgrief3}
\|\tilde{N}_{2,a}(\nabla u)\|_{L^p(\mathbb R^{n-1})}&\le C\|\nabla_T f\|_{L^p(\mathbb R^{n-1})},
\end{align}
from which solvability of the $L^p$ Regularity problem follows. \vglue1mm

It remain to remove the a priori assumption $\|\tilde{N}_{2,a}(\nabla u)\|_{L^p(\mathbb R^{n-1})}<\infty$ we have made earlier.

We again argue by extrapolation starting with $p=2$ where we know this since we have already established solvability of the Regularity problem for this value of $p$.

This time we shall use an extrapolation argument based on an method in \cite{DKV} of obtaining $L^{2-\varepsilon}$ estimates of nontangential maximal functions
from $L^2$ estimates on sawtooth domains. See also \cite{DHM}, where this technique was used to get solvability of the $L^p$ Dirichlet problem for elliptic systems for
$2 - \varepsilon < p < 2$.
In particular, the argument of \cite{DKV}, reproduced in section 6 of \cite{DHM} for systems and hence valid in our setting, gives that $\|\tilde{N}_{2,a}(\nabla u)\|_{L^{p_0}({\BBR}^{n-1})}< \infty$
for $p_0=2 - \varepsilon$ and hence the same is true for $\|\tilde{N}_{p_0,a}(\nabla u)\|_{L^{p_0}({\BBR}^{n-1})}$. The quantity $\varepsilon$ depends on the constant $C_2$ in the
$L^2$ norm inequality between the nontangential maximal function and the square function $S_2$.  

Once we know these quantities are finite, the calculation we did above holds for $p_0$ giving us 
(\ref{goodgrief3}), and hence the same estimate for $\nabla u$, for $p_0=2-\varepsilon$ 
and a constant $C_{2-\epsilon}$.

The very same extrapolation argument, now invoking the $L^{p_0}$ estimate gives an $L^{p_0 - \varepsilon'}$ estimate where $\varepsilon'$ now depends on $C_{2-\varepsilon}$. In other words,
we apply the same argument as \cite{DKV} but starting from known estimates for the nontangential maximal function in  $L^{p_0}$ instead of $L^2$. We can continue this bootstrapping as long as we stay in the range of $p$-ellipticity and as long as we can
be sure that we are moving by an amount $\varepsilon$ which is not getting smaller at each step. The same argument as given previously implies that we can reach any value $p<2$ in the $p$-ellipticity range of the matrix $A$ in finite number of steps. From this Theorems \ref{S3:T0} follows.\vglue1mm

Finally, we remove the temporary assumption that the coefficients are smooth. The key is that the constants in the estimates above depend only on $n,p,\lambda_p,\Lambda,\|\mu\|_{\mathcal C}$ and not on any further degree of smoothness of the coefficients of $\mathcal L$. Hence the classical argument where we approximate our coefficients by smooth functions, and then pass from the smooth coefficient case by taking the limit can be applied. See for example section 4 of \cite{DPcplx} where this is discussed in more detail.
\qed\vglue1mm

\section{Proof of Theorem \ref{S3:T1}.}\label{S6}

The proof is based on the following abstract result \cite{Sh1}, see also \cite[Theorem 3.1]{WZ} for a version on an arbitrary bounded domain.

\begin{theorem}\label{th-sh} Let $T$ be a bounded sublinear operator on $L^2({\mathbb R}^{n-1};{\mathbb C}^m)$. Suppose that
for some $p>2$, $T$ satisfies the following $L^p$ localization property. For any ball $\Delta=\Delta_d\subset{\mathbb R}^{n-1}$ and $C^\infty$ function $f$ with supp$(f)\subset{\mathbb R}^{n-1}\setminus 3\Delta$ the following estimate holds:
\begin{align}
&\left(|\Delta|^{-1}\int_\Delta|Tf|^p\,dx'\right)^{1/p}\le\label{eq-pf8}\\
&\qquad C\left\{\left(|2\Delta|^{-1}\int_{2\Delta}|Tf|^2\,dx'\right)^{1/2}+\sup_{\Delta'\supset \Delta}\left(|\Delta'|^{-1}\int_{\Delta'}|f|^2\,dx'\right)^{1/2}  \right\},\nonumber
\end{align}
for some $C>0$ independent of $f$. Then $T$ is bounded on $L^q({\mathbb R}^{n-1};{\mathbb C}^m)$ for any $2\le q<p$.
\end{theorem}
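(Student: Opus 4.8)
Theorem~\ref{th-sh} is Shen's real-variable theorem (\cite{Sh1}; see also \cite{Sh2}), so the most economical option is simply to quote it; I nonetheless sketch how I would prove it, since this mechanism is what drives Section~\ref{S6}. The plan has three steps: first, reduce the global $L^q$ estimate to a local self-improving reverse-H\"older inequality for $Tf$ and its far/near truncations; second, prove that inequality through a good-$\lambda$ estimate built from a Whitney decomposition performed at each height $\lambda$; third, pass from the local inequality to the global bound by letting large balls exhaust ${\mathbb R}^{n-1}$ and using that $Tf\in L^2$.

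For the second step, fix $q\in(2,p)$ and, by density together with the $L^2$ boundedness of $T$, reduce to a smooth compactly supported $f$ for which $\|Tf\|_{L^q}<\infty$ is known a priori. Let $M$ be the uncentered Hardy--Littlewood maximal operator. For $\lambda$ above a threshold controlled by $\|f\|_{L^2}$, decompose $E_\lambda=\{x':M(|Tf|^2)(x')>\lambda\}$ into maximal Whitney cubes $\{\Delta_i\}$; a fixed dilate of $\Delta_i$ contains a point $x_i$ with $M(|Tf|^2)(x_i)\le\lambda$, and I would discard those $\Delta_i$ a definite proportion of which already lies in $\{M(|f|^2)>\epsilon\lambda\}$. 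On a retained cube, sublinearity gives $|Tf|\le|T(f\chi_{3\Delta_i})|+|T(f\chi_{(3\Delta_i)^c})|$: the near piece $T(f\chi_{3\Delta_i})$ is bounded in $L^2(2\Delta_i)$ by $\|T\|_{L^2\to L^2}\|f\chi_{3\Delta_i}\|_{L^2}\lesssim(\epsilon\lambda)^{1/2}|\Delta_i|^{1/2}$, while the far piece, since $\mathrm{supp}(f\chi_{(3\Delta_i)^c})\subset{\mathbb R}^{n-1}\setminus 3\Delta_i$, is bounded in $L^p(\Delta_i)$ by \eqref{eq-pf8}, whose right-hand side is $\lesssim(\lambda+\epsilon\lambda)^{1/2}$ by the stopping-time choice of $x_i$ (after again splitting off the near piece on $2\Delta_i$). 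Chebyshev in $L^p$ for the far piece and in $L^2$ for the near piece yields, for a large parameter $A$,
\begin{equation}\nonumber
|\{x'\in\Delta_i:|Tf(x')|^2>A\lambda\}|\le\bigl(CA^{-p/2}+C\epsilon\bigr)|\Delta_i|,
\end{equation}
and summing over $i$ produces the good-$\lambda$ inequality
$$|\{M(|Tf|^2)>A\lambda\}|\le\bigl(CA^{-p/2}+C\epsilon\bigr)|\{M(|Tf|^2)>\lambda\}|+|\{M(|f|^2)>\epsilon\lambda\}|.$$

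Multiplying by $\lambda^{q/2-1}$ and integrating in $\lambda$, then choosing $A$ large and afterwards $\epsilon$ small so that $CA^{-p/2}+C\epsilon<\tfrac12 A^{-q/2}$ --- possible exactly because $q<p$ --- lets me absorb the first term on the right (legitimate since $\|Tf\|_{L^q}<\infty$), and the contribution of small $\lambda$ is handled by the weak-$(1,1)$ bound for $M$ applied to $|Tf|^2\in L^1$. Since $q/2>1$, $M$ is bounded on $L^{q/2}$, so what remains reads $\|Tf\|_{L^q}^q=\||Tf|^2\|_{L^{q/2}}^{q/2}\le\|M(|Tf|^2)\|_{L^{q/2}}^{q/2}\lesssim\|M(|f|^2)\|_{L^{q/2}}^{q/2}\lesssim\|f\|_{L^q}^q$; a density argument removes the a priori assumption, giving boundedness on $L^q$ for every $2\le q<p$.

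The step I expect to be the main obstacle is the bookkeeping in the good-$\lambda$ estimate: one must arrange the Whitney decomposition and the comparison point $x_i$ so that \emph{both} $\bigl(\dint_{2\Delta_i}|Tf|^2\bigr)^{1/2}$ \emph{and} the quantity $\sup_{\Delta'\supset\Delta_i}\bigl(\dint_{\Delta'}|f|^2\bigr)^{1/2}$ appearing on the right of \eqref{eq-pf8} are genuinely controlled by $\lambda^{1/2}$, respectively $(\epsilon\lambda)^{1/2}$ --- this is exactly where the uncentered maximal function and the stopping-time selection are needed --- and one must also make the absorption rigorous, which forces the a priori finiteness $\|Tf\|_{L^q}<\infty$ to be established first for smooth compactly supported $f$. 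All of this is carried out in detail in \cite{Sh1,Sh2}; in the application below, $T$ is a sublinear operator whose $L^2$ boundedness and localization property \eqref{eq-pf8} have just been established, and Theorem~\ref{th-sh} upgrades these to $L^q$ boundedness for $2\le q<p$.
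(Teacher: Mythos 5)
The paper does not prove Theorem \ref{th-sh} at all --- it simply cites Shen \cite{Sh1} (and \cite{WZ} for the bounded-domain version) --- so your primary move of quoting the result is exactly what the paper does, and your supplementary good-$\lambda$ sketch is the standard real-variable argument behind Shen's theorem. One caveat: the a priori finiteness $\|Tf\|_{L^q}<\infty$ is not automatic for a general bounded sublinear operator on $L^2$, even for smooth compactly supported $f$; this is usually circumvented either by integrating the good-$\lambda$ inequality over a truncated range $(0,\Lambda)$ and letting $\Lambda\to\infty$, or by the Caffarelli--Peral type covering iteration that Shen actually employs --- a point you yourself flag and correctly defer to \cite{Sh1,Sh2}.
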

In our case the role of $T$ is played by the sublinear operator $f\mapsto \tilde{N}_{2,a}(u)$, where $u$ is the solution of the Dirichlet problem ${\mathcal L}u=0$ with boundary data $f$. Clearly, in the Theorem above
the factors $2\Delta$, $3\Delta$ do not play significant role. Hence if we establish estimate \eqref{eq-pf8}
with $2\Delta$ replaced by $m\Delta$ with $f$ vanishing on $(m+1)\Delta$ for some $m>1$ the claim of the Theorem will remain to hold.

Clearly, our operator $T:f\mapsto \tilde{N}_{2,a}(u)$ is sublinear and bounded on $L^2$
by \cite{DPcplx}, for coefficients with small Carleson norm $\mu$. To prove \eqref{eq-pf8} we shall establish the following reverse H\"older inequality, following ideas of Shen \cite{Sh2}.
\begin{equation}
\left(\frac1{|\Delta|}\int_\Delta|\tilde{N}_{2,a}(u)|^p\,dx'\right)^{1/p}\le\label{eq-pf9}
C\left(\frac1{|3\beta m\Delta|}\int_{3\beta m\Delta}|\tilde{N}_{2,a}(u)|^2\,dx'\right)^{1/2},
\end{equation}
where $f=u\big|_{\partial{\mathbb R}^n_+}$ vanishes on $4\beta m\Delta$. Here $m$ is determined by Lemma \ref{S6:L1} and $\beta>1$ is determined by a bootstrap argument explained later.
Having this by Theorem \ref{th-sh} we have for any $q \in [2,p)$ the estimate
\begin{equation}\label{eq-pf10}
\|\tilde{N}_{2,a}(u)\|_{L^{q}({\BBR}^{n-1})}\le C\|f\|_{L^{q}({\BBR}^{n-1})},
\end{equation}
which implies $L^q$ solvability of the Dirichlet problem for the operator $\mathcal L$.

It remains to establish \eqref{eq-pf9}. Let us define
\begin{align}\label{eq-pf11}
&{\mathcal M}_1(u)(x')=\sup_{y\in\Gamma_a(x')}\{w_2(y):\,\delta(y)\le cd\},\\
&{\mathcal M}_2(u)(x')=\sup_{y\in\Gamma_a(x')}\{w_2(y):\,\delta(y)> cd\}.\nonumber
\end{align}
where $c=c(a)>0$ is chosen such that for all $x'\in\Delta$ if $y=(y_0,y')\in\Gamma_a(x')$ and $y_0=\delta(y)\le cd$ then $y'\in 2\Delta$. Here $d=\mbox{diam}(\Delta)$ and $w_2$ is the $L^2$ average of $u$
$$w_2(y)=\left(\dint_{B_{\delta(y)/2}(y)}|u(z)|^2\,dz\right)^{1/2}.$$
It follows that
$$\tilde{N}_{2,a}(u)=\max\{{\mathcal M}_1(u),{\mathcal M}_2(u)\}.$$
We first estimate ${\mathcal M}_2(u)$. Pick any $x'\in\Delta$.
For any $y\in\Gamma(x')$ with $\delta(y)>cd$ it follows that
for a large subset $A$ of $2\Delta$ (of size comparable to $2\Delta$) we have
$$z'\in A\quad\Longrightarrow\quad y\in\Gamma_a(z')\quad\Longrightarrow\quad w_2(y)\le \tilde{N}_{2,a}(u)(z').$$
Hence for any $x'\in\Delta$ 
$${\mathcal M}_2(u)(x')\le C\left(\frac1{|2\Delta|}\int_{2\Delta} \left[\tilde{N}_{2,a}(u)(z')\right]^2\,dz'\right)^{1/2}.$$
It remains to estimate ${\mathcal M}_1(u)$ on $\Delta$.

We write
$$u(x_0,x')-u(0,y')=\int_{0}^{1}\frac{\partial u}{\partial s}(sx_0,(1-s)y'+sx')\,ds.$$
Let $K=\{(y_0,y'):y'\in \Delta\mbox{ and }cd<y_0<2cd\}$. Using the previous line and the fact that $u$ vanishes on $3\Delta\subset 4\beta m\Delta$ we have for any $x'\in\Delta$
\begin{equation}\label{eq-pf12}
{\mathcal M}_1(u)(x')\le \sup_{K}w_2\,+\,C\int_{2\Delta}\frac{\tilde{N}_{2,a/2}(\nabla u)(y')}{|x'-y'|^{n-2}}dy'.
\end{equation}
By the fractional integral estimate, this implies that
\begin{equation}\label{eq-pf13}
\left(\frac1{|\Delta|}\int_\Delta[{\mathcal M}_1(u)(x')]^p\,dx'\right)^{1/p}\le \sup_{K}w_2\,+\,Cd
\left(\frac1{|2\Delta|}\int_{2\Delta}[\tilde{N}_{2,a/2}(\nabla u)(x')]^q\,dx'\right)^{1/q},
\end{equation}
where $\frac1p=\frac1q-\frac1{n-1}$ and $1<q<n-1$.\vglue1mm

To further estimate \eqref{eq-pf13} we use the Lemma \ref{S6:L1}. We claim the following reverse H\"older inequality holds
$$\left(\frac1{|\Delta|}\int_{\Delta}[\tilde{N}_{2,a/2}(\nabla u)(x')]^q\,dx'\right)^{1/q}\lesssim \left(\frac1{|\beta \Delta|}\int_{\beta \Delta}[\tilde{N}_{2,a/2}(\nabla u)(x')]^2\,dx'\right)^{1/2},$$
whenever the solution $\mathcal Lu=0$ vanishes on at least $2\beta \Delta$.

Let $d$ be the diameter of $\Delta$. We apply Lemma \ref{S6:L1} to the domains (\ref{Odom})
${\mathcal O}_\tau={\mathcal O}_{\tau\Delta,a}$, for $\tau\in [1,2]$. This gives us
\begin{equation}\label{Main-Estlocxx2x}
\|\tilde{N}_{a/2} (\nabla u)\|_{L^{q}(\Delta)}\leq C\|\nabla_T f\|_{L^{q}(\partial{\mathcal O_\tau\cap \overline{T(\tau m\Delta)}})}+Cd^{(n-1)/q}\sup_{x\in {\mathcal O}_\tau \cap\{\delta(x)>d\}}W_2(x).
\end{equation}
Observe that for any $x\in {\mathcal O}_\tau \cap\{\delta(x)>d\}$ we shall have
$$|A|=|\{y'\in 2\Delta:x\in\Gamma_{a/2}(y')\}|\approx d^{n-1},$$
and clearly for each $y'\in A$ we have $W_2(x)\lesssim \tilde{N}_{a/2} (\nabla u)(y')$, from which
$$W_2(x)\lesssim |A|^{-1}\left(\int_A[\tilde{N}_{a/2} (\nabla u)(y')]^2dy'\right)^{1/2}\lesssim |2\Delta|^{-1}\left(\int_{2\Delta}[\tilde{N}_{a/2} (\nabla u)(y')]^2dy'\right)^{1/2}.$$
It follows 
\begin{equation}\label{eq-add1}
\sup_{x\in \mathcal O_\tau \cap\{\delta(x)>d\}}W_2(x)\lesssim 
 |2\Delta|^{-1}\left(\int_{2\Delta}[\tilde{N}_{a/2} (\nabla u)(y')]^2dy'\right)^{1/2}.
 \end{equation}
 We use this in \eqref{Main-Estlocxx2x}, integrate \eqref{Main-Estlocxx2x} in $\tau$ over the interval $[1,2]$ and divide by $d^{(n-1)/q}$. This gives after using the fact that $u=0$ vanishes on at least $4m\Delta$:
\begin{align}\label{eq-pf17}
&\left(\frac1{|\Delta|}\int_{\Delta}[\tilde{N}_{2,a/2}(\nabla u)(x')]^q\,dx'\right)^{1/q}\\
\lesssim &\quad
\left(\frac1{T(2m\Delta)}\iint_{T(2m\Delta)}|\nabla u(x)|^q\,dx\right)^{1/q} +\left(\frac1{|2\Delta|}\int_{2\Delta} \left[\tilde{N}_{2,a}(\nabla u)(x')\right]^2\,dx'\right)^{1/2}.\nonumber
\end{align}
We have also used the trivial estimate $|\nabla_Tu|\le|\nabla u|$ on $\partial {\mathcal O}_\tau\cap{T(2m\Delta)}$.  For the first term we have

\begin{align}\label{eq-add2}
\iint_{T(2m\Delta)}|\nabla u(x)|^q\,dx&=\iint_{T(2m\Delta)\cap\{x_0<\varepsilon md\}}|\nabla u(x)|^q\,dx\\\nonumber &+\iint_{T(2m\Delta)\cap\{x_0> \varepsilon md\}}|\nabla u(x)|^q\,dx.
\end{align}
The set $T(2m\Delta)\cap\{x_0> \varepsilon md\}$ in the the interior of ${\mathbb R}^n_+$ of diameter and distance to the boundary that is comparable to $d$. It follows that the interior estimate \eqref{RHthm1} can be used (we only enlarge this set by a small factor $\alpha>1$ so that $\alpha[T(2m\Delta)\cap\{x_0> \varepsilon md\}]$ fully lies in the interior of ${\mathbb R}^n_+$. It follows
\begin{align}\label{eq-add3}
&\quad\frac1{|T(2m\Delta)|}\iint_{T(2m\Delta)\cap\{x_0> \varepsilon md\}}|\nabla u(x)|^q\,dx  \\&\lesssim\left(\frac1{|T(2m\Delta)|}\iint_{\alpha[T(2m\Delta)\cap\{x_0> \varepsilon md\}]}|\nabla u(x)|^2\,dx\right)^{q/2}\nonumber\\\nonumber
&\lesssim \left(\frac1{|T(3m\Delta)|}\iint_{T(3m\Delta)}|\nabla u(x)|^2\,dx\right)^{q/2}\\\nonumber
&\lesssim  \left(\frac1{|3m\Delta|}\int_{3m\Delta}[\tilde{N}_{a/2} (\nabla u)(x')]^2dx'\right)^{q/2}.
\end{align}
For the term $\iint_{T(2m\Delta)\cap\{x_0<\varepsilon md\}}|\nabla u(x)|^q\,dx$
we use the trivial estimate
\begin{align}\label{eq-add4}
\iint_{T(2m\Delta)\cap\{x_0<\varepsilon md\}}|\nabla u(x)|^q\,dx\le \varepsilon md 
\int_{3m\Delta}[\tilde{N}_{a/2} (\nabla u)(x')]^qdx'.
\end{align}
Combining \eqref{eq-add2}-\eqref{eq-add4} finally yields
\begin{align}\label{eq-add5}
&\quad\frac1{|T(2m\Delta)|}\iint_{T(2m\Delta)}|\nabla u(x)|^q\,dx\\
&\le \left(\frac{C_\varepsilon}{|3m\Delta|}\int_{3m\Delta}[\tilde{N}_{a/2} (\nabla u)(x')]^2dx'\right)^{q/2}
+\frac\varepsilon{|3m\Delta|}\int_{3m\Delta}[\tilde{N}_{a/2} (\nabla u)(x')]^qdx'.\nonumber
\end{align}
This combined with \eqref{eq-pf17} yields:
\begin{align}\label{eq-add6}
&\frac1{|\Delta|}\int_{\Delta}[\tilde{N}_{2,a/2}(\nabla u)(x')]^q\,dx'\\
\lesssim &\quad
\left(\frac{C_\varepsilon}{|3m\Delta|}\int_{3m\Delta}[\tilde{N}_{a/2} (\nabla u)(x')]^2dx'\right)^{q/2}
+\frac\varepsilon{|3m\Delta|}\int_{3m\Delta}[\tilde{N}_{a/2} (\nabla u)(x')]^qdx'.\nonumber
\end{align}

We now recall an abstract result from \cite[Chapter 5; Proposition 1.1]{Gi}.

\begin{theorem}\label{thm-gi} Let $B_R$ be a ball in ${\mathbb R}^N$. Suppose that $g\ge 0$, $g\in L^q(B_R)$ for some $q>1$
and for all $x\in B_{R/2}$ and $0<r<R/16$ we have
$$\dint_{B_r} g^q\,dx \le C\left(\dint_{B_{2r}} g\,dx\right)^q +\theta \dint_{B_{2r}}g^q\, dx,$$
for some constants $C>1$, $\theta<1$.

Then there exists $\delta=\delta(C,\theta,N,q)>0$ and $K=K(C,\theta,N,q)>0$ such that for all $B_r$
concentric with $B_R$ of radius $0<r<R/4$ we have
$$\left(\dint_{B_{r/2}} g^{q+\delta}\,dx\right)^{1/(q+\delta)} \le K\left(\dint_{B_{r}} g^q\,dx\right)^{1/q} .$$
\end{theorem}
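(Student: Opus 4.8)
The plan is to recognize the statement as the self‑improvement (Gehring–Giaquinta) property of weak reverse H\"older inequalities and to run the classical argument: first prove a distribution‑function inequality by a Calder\'on–Zygmund stopping time, then integrate it in the level to gain the small extra exponent. Fix a ball $B_{r_0}$ concentric with $B_R$ with $r_0<R/4$; it suffices to bound $\dint_{B_{r_0/2}}g^{q+\delta}$ by $(\dint_{B_{r_0}}g^q)^{1+\delta/q}$. Two routine reductions are made first. To handle the gap between $B_{r_0/2}$ and $B_{r_0}$ one carries a free parameter $\tau\in[1/2,1]$, proves the estimate between $B_{\tau r_0}$ and $B_{r_0}$, and removes the mismatch with the standard iteration (``hole‑filling'') lemma. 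To make every integral below a priori finite, one works with the truncations $g\wedge T$, obtains bounds uniform in $T$, and lets $T\to\infty$ by monotone convergence; this is what the hypothesis $g\in L^q$ is for.

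The heart of the matter is the distribution inequality: there are $b$ and $\eta\in(0,1)$ depending only on $C,\theta,N,q$, and $t_0\simeq(\dint_{B_{r_0}}g^q)^{1/q}$, such that
$$\int_{B_{r_0/2}\cap\{g>t\}}g^{q}\,dx\ \le\ b\,t^{q-1}\int_{B_{r_0}\cap\{g>\eta t\}}g\,dx\qquad(t\ge t_0).$$
To prove it, fix $t\ge t_0$ and perform a Calder\'on–Zygmund decomposition of $g^q$ on $B_{r_0}$ at level $t^q$: since $t^q$ exceeds the average of $g^q$ over $B_{r_0}$, a stopping time over balls produces a family $\{B_i\}$, $B_i\subset B_{r_0}$, of bounded overlap, with $t^q<\dint_{B_i}g^q\le 2^N t^q$, with $\dint_{2B_i}g^q\le t^q$ (maximality of the stopping radius), and with $\{g>t\}\subset\bigcup_i B_i$ up to a null set. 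On each $B_i$ the hypothesis applies, and using $\dint_{2B_i}g^q\le t^q$,
$$t^q<\dint_{B_i}g^q\le C\Big(\dint_{2B_i}g\Big)^q+\theta\,\dint_{2B_i}g^q\le C\Big(\dint_{2B_i}g\Big)^q+\theta t^q,$$
so $\dint_{2B_i}g>((1-\theta)/C)^{1/q}t=:c_\theta t$ — this is exactly where $\theta<1$ is used. Writing $g=g\chi_{\{g>\eta t\}}+g\chi_{\{g\le\eta t\}}$ with $\eta:=c_\theta/2$ gives $\dint_{2B_i}g\chi_{\{g>\eta t\}}\ge c_\theta t-\eta t=\tfrac12 c_\theta t$, hence $|B_i|\lesssim t^{-1}\int_{2B_i\cap\{g>\eta t\}}g$. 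Finally $\int_{\{g>t\}}g^q\le\sum_i\int_{B_i}g^q\lesssim t^q\sum_i|B_i|\lesssim t^{q-1}\sum_i\int_{2B_i\cap\{g>\eta t\}}g\lesssim t^{q-1}\int_{B_{r_0}\cap\{g>\eta t\}}g$, using bounded overlap. (The care needed near $\partial B_{r_0}$, where a stopping ball may be large, is handled in the usual way by restricting to $t$ large and to the stopping balls meeting $B_{r_0/2}$.)

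Now multiply the distribution inequality by $\delta t^{\delta-1}$ and integrate over $t\in(t_0,\infty)$ (or over $(t_0,T)$, then $T\to\infty$). By Fubini the left side is $\ge\int_{B_{r_0/2}}g^{q+\delta}-t_0^{\delta}\int_{B_{r_0/2}}g^q$, while the right side becomes
$$b\,\delta\int_{B_{r_0}}g(x)\!\int_{t_0}^{g(x)/\eta}\! t^{q+\delta-2}\,dt\,dx\ \le\ \frac{b\,\delta\,\eta^{1-q-\delta}}{q+\delta-1}\int_{B_{r_0}}g^{q+\delta}\,dx,$$
the inner integral converging since $q+\delta-1>0$. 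As the coefficient $b\,\delta\,\eta^{1-q-\delta}/(q+\delta-1)\to0$ when $\delta\to0$, choose $\delta=\delta(C,\theta,N,q)$ small enough that it is $\le\tfrac12$; the term $\int g^{q+\delta}$ on the right is then absorbed into the left (legitimate by the truncation, and — together with the $B_{\tau r_0}$–$B_{r_0}$ gap — by the iteration lemma), giving $\dint_{B_{r_0/2}}g^{q+\delta}\lesssim t_0^{\delta}\dint_{B_{r_0}}g^q\lesssim(\dint_{B_{r_0}}g^q)^{1+\delta/q}$. Since $(1+\delta/q)/(q+\delta)=1/q$, this is precisely the asserted reverse H\"older inequality with $K=K(C,\theta,N,q)$.

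The step I expect to cause the most trouble is the Calder\'on–Zygmund decomposition with balls: controlling the overlap of the stopping family, checking that each $2B_i$ is an admissible ball for the hypothesis (of the right shape and inside $B_R$), and treating the stopping balls that abut $\partial B_{r_0}$; combined with the hole‑filling iteration reconciling the radii $B_{r_0/2}$ and $B_{r_0}$. None of this is conceptually deep — the statement is exactly \cite[Ch.~V, Prop.~1.1]{Gi} — so in the paper it suffices to cite that reference.
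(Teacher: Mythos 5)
Your sketch is correct and is essentially the argument behind the result the paper simply cites (\cite{Gi}, Ch.~V, Prop.~1.1): a Calder\'on--Zygmund/stopping-time decomposition yielding the level-set inequality $\int_{\{g>t\}}g^q\lesssim t^{q-1}\int_{\{g>\eta t\}}g$, followed by integration in $t$ and absorption of the small $\delta$-term, with truncation and the hole-filling iteration handling a priori finiteness and the mismatch of balls. The technical points you flag (ball-based covering with bounded overlap, stopping balls near $\partial B_{r_0}$, absorption across different balls) are exactly the ones treated in the cited reference, so no gap beyond what you have already identified as routine.
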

Applying this to \eqref{eq-add6} with $g(x')=[\tilde{N}_{a/2} (\nabla u)(x')]^2$ 
yields that for some $\alpha>1$ we have
\begin{align}\label{eq-add7}
&\left(\frac1{|\Delta|}\int_{\Delta}[\tilde{N}_{2,a/2}(\nabla u)(x')]^{q+\delta}\,dx'\right)^{1/(q+\delta)}
\lesssim
\left(\frac{1}{|\alpha\Delta|}\int_{\alpha\Delta}[\tilde{N}_{a/2} (\nabla u)(x')]^q dx'\right)^{1/q}.
\end{align}
Here clearly, $\delta=\delta(q)$ depends on $q$ but as long as the constant $C_\varepsilon$ in the estimate 
\eqref{eq-add5} stays uniform (which is for $q\in [p_0+\eta,p_0'-\eta]$ for any $\eta>0$ where $(p_0,p_0')$ is the interval we have $p$-ellipticity) we shall have
$$\inf_{q\in [2+\eta,p_0'-\eta]}\delta(q)>0,\qquad\mbox{for all }\eta>0.$$
Here we are avoiding $q$ near $2$ as well since then \eqref{eq-add5} provides no information. However, to get us started in the bootstrap argument we may use the inequality
$$\left(\frac1{T(2m\Delta)}\iint_{T(2m\Delta)}|\nabla u|^{2+\delta_0}\,dx\right)^{1/(2+\delta_0)}\lesssim \left(\frac1{T(3m\Delta)}\iint_{T(3m\Delta)}|\nabla u|^2\,dx\right)^{1/2},$$
for some $\delta_0>0$ small 
which is a well known consequence of the Caccioppoli's inequality and Theorem \ref{thm-gi}.
It follows using \eqref{eq-pf17}
\begin{align}\nonumber
&\left(\frac1{|\Delta|}\int_{\Delta}[\tilde{N}_{2,a/2}(\nabla u)(x')]^{2+\delta_0}\,dx'\right)^{1/(2+\delta_0)}\lesssim
\left(\frac1{|3m\Delta|}\int_{3m\Delta} \left[\tilde{N}_{2,a}(\nabla u)(x')\right]^2\,dx'\right)^{1/2}.
\end{align}
This is the initial inequality in the bootstrap argument after which we iteratively use \eqref{eq-add7} where
$\delta>0$ stays bounded away from zero as long as we take $q\le p_0'-\eta$ for some small fixed $\eta>0$.
This finally implies that for all $q<p_0'$ we have
\begin{align}\label{eq-add8}
&\left(\frac1{|\Delta|}\int_{\Delta}[\tilde{N}_{2,a/2}(\nabla u)(x')]^{q}\,dx'\right)^{1/q}\lesssim
\left(\frac1{|\beta\Delta|}\int_{\beta \Delta} \left[\tilde{N}_{2,a}(\nabla u)(x')\right]^2\,dx'\right)^{1/2},
\end{align}
for some $\beta>1$ with $u$ vanishing on $2\beta\Delta$. The implied constant in the estimate \eqref{eq-add8}
gets progressively worse as $q\to p_0'-$. Next, we use again \eqref{eq-pf17} but this time for $q=2$

\begin{align}\label{eq-add9}
&\left(\frac1{|\beta\Delta|}\int_{\beta\Delta}[\tilde{N}_{2,a/2}(\nabla u)(x')]^2\,dx'\right)^{1/2}\\
\lesssim &\quad \left(\frac1{T(2\beta m\Delta)}\iint_{T(2\beta m\Delta)}|\nabla u|^2\,dx\right)^{1/2}
+\sup_{x\in \mathcal O_{2\beta}\cap\{\delta(x)>d\}}W_2(x),\nonumber
\end{align}
where we put back $W_2$ instead of our initial estimate \eqref{eq-add1}. For the first term we use the boundary Caccioppoli's inequality 
\begin{align}\nonumber
\left(\frac1{T(2\beta m\Delta)}\iint_{T(2\beta m\Delta)}|\nabla u|^2\,dx\right)^{1/2}&\lesssim d^{-1}\left(\frac1{T(3\beta m\Delta)}\iint_{T(3\beta m\Delta)}|u|^2\,dx\right)^{1/2}\\\nonumber
&\lesssim d^{-1}\left(\frac1{|3\beta m\Delta|}\int_{3\beta m\Delta} \left[\tilde{N}_{2,a}(u)(z')\right]^2\,dz'\right)^{1/2},\nonumber
\end{align}
while for the second term by the interior Ciacciopoli's inequality we have for all $x\in{\mathbb R}^n_+$ with $\delta(x)>d$
$$W_2(x)\le Cd^{-1}w_2(x),$$
where $w_2$ denotes the $L^2$ averages of $u$ (defined earlier). We have intentionally shrunk the size of the ball in the definition of $W_2$ so that this pointwise estimate holds. Since the $x$ we consider in the supremum is in $\mathcal O_{2\beta}$ it then follows
\begin{equation}\label{eq-pf16}
\sup_{x\in {\mathcal O_{2\beta}}\cap\{\delta(x)>d\}}W_2(x)\lesssim d^{-1}\left(\frac1{|2\beta\Delta|}\int_{2\beta\Delta} \left[\tilde{N}_{2,a}(u)(z')\right]^2\,dz'\right)^{1/2}.
\end{equation}
Using this and the previous estimates \eqref{eq-add8}-\eqref{eq-add9} then yield for all $q<p_0'$
\begin{align}\label{eq-add10}
&\left(\frac1{|\Delta|}\int_{\Delta}[\tilde{N}_{2,a/2}(\nabla u)(x')]^{q}\,dx'\right)^{1/q}\lesssim
d^{-1}\left(\frac1{|3\beta m\Delta|}\int_{3\beta m\Delta} \left[\tilde{N}_{2,a}(u)(z')\right]^2\,dz'\right)^{1/2}.
\end{align}

Finally, inserting this estimate into \eqref{eq-pf13} yields
\begin{equation}\label{eq-pf19}
\left(\frac1{|\Delta|}\int_\Delta[{\mathcal M}_1(u)(x')]^p\,dx'\right)^{1/p}\le C\left(\frac1{|3\beta m\Delta|}\int_{3\beta m\Delta} \left[\tilde{N}_{2,a}(u)(z')\right]^2\,dz'\right)^{1/2},
\end{equation}
where $\frac1p=\frac1q-\frac1{n-1}$ and $1<q<n-1$ such that $A$ is $q$-elliptic and Carleson norm of $\mu$ is small. Since we have assumed $A$ is $q$-elliptic for $q\in (p_0,p_0')$ and $p_0'>2$ this implies in dimensions $2$ and $3$ that we can consider any $2<p<\infty$, while in dimensions $n\ge 4$ we can have
$2<p<p_{\max}=p_0'(n-1)/(n-1-p_0')$ when $p_0'<n-1$, $p_{\max}=\infty$ otherwise. Observe that always $p_{max}>2(n-1)/(n-3)$. From this claim of Theorem \ref{S3:T1} follows as we have established \eqref{eq-pf9} for such values of $p$.
\vglue1mm

\begin{bibdiv}
\begin{biblist}

\bib{ADN}{article}{
   author={Agmon, S.},
   author={Douglis, A.},
     author={Nirenberg, L.},
   title={Estimates near the boundary for solutions of elliptic partial differential equations satisfying general boundary conditions II},
   journal={Comm. Pure and Appl. Math.},
   volume={17}, 
   date={1964},
   pages={35-92},
   }

\bib{AAAHK}{article}{
   author={Alfonseca, M},
   author={Auscher, P.},
   author={Axelsson, A},
   author={Hofmann, S.},
   author={Kim, S.},
   title={Analyticity of layer potentials and $L^2$ solvability of boundary value problems for divergence form elliptic equations with complex $L^\infty$ coefficients.},
   journal={Adv. Math},
   volume={226},
   date={2011},
   number={5},
   pages={4533--4606},
}

\bib{AAH}{article}{
   author={Auscher, P.},
   author={Axelsson, A},
   author={Hofmann, S.},
   title={Functional calculus of Dirac operators and complex perturbations of Neumann and Dirichlet problems},
   journal={J. Func. Anal},
   volume={255},
   date={2008},
   number={2},
   pages={374--448},
}
			
 \bib{AAM}{article}{
   author={Auscher, P.},
   author={Axelsson, A.},
   author={McIntosh, A.},
   title={Solvability of elliptic systems with square integrable boundary
   data},
   journal={Ark. Mat.},
   volume={48},
   date={2010},
   number={2},
   pages={253--287},
}

\bib{ABBO}{article}{
   author={Auscher, P.},
   author={Bath\'elemy, L},
   author={Ouhabaz, E.},
   title={Absence de la $L^\infty$-contractivit\'e pour les semi-groupes associ\'es auz op\'erateurs elliptiques complexes sous forme divergence},
   journal={Poten. Anal.},
   volume={12},
   date={2000},
   pages={169--189},
}

\bib{AHLMT}{article}{
   author={Auscher, P.},
   author={Hofmann, S.},
   author={Lacey, M.},
   author={McIntosh, A.},
   author={Tchamitchian, P.},
   title={The solution of the Kato square root problem for second order elliptic operators on ${\mathbb R}^n$},
   journal={Ann. Mat.},
   volume={156},
   date={2001},
   number={2},
   pages={633--654},
}

 \bib{CD}{article}{
   author={Carbonaro, A.},
   author={Dragi\v{c}evi\'c, O.},
   title={Convexity of power functions and bilinear embedding for divergence-form operators with complex coefficients},
   journal={arXiv:1611.00653},
}

 \bib{CM}{article}{
   author={Cialdea, A.},
   author={Maz'ya, V.},
   title={Criterion for the $L^p$-dissipativity of second order differential operators with complex coefficients},
   journal={ J. Math. Pures Appl.},
   volume={84},
   date={2005},
   number={9},
   pages={1067--1100},
}

\bib{CM1}{article}{
   author={Cialdea, A.},
   author={Maz'ya, V.},
   title={Criteria for the $L^p$-dissipativity of systems of second order differential equations},
   journal={Ricc. Mat.},
   volume={55},
   date={2006},
   number={2},
   pages={233--265},
}

\bib{CM3}{article}{
   author={Cialdea, A.},
   author={Maz'ya, V.},
   title={$L^p$-dissipativity of the Lam\'e operator.},
   journal={Mem. Differ. Equ. Math. Phys.},
   volume={60},
   date={2013},
   pages={111--133},
}

\bib{CMS}{article}{
   author={Coifman, R.},
   author={Meyer, Y.},
     author={Stein, E.},
   title={Some new function spaces and their applications to harmonic analysis},
   journal={JFA},
   volume={62}, 
   date={1985},
   pages={304-335},
}

\bib{DK}{article}{
   author={Dahlberg, B.},
   author={Kenig, C.},
   title={Hardy spaces and the Neumann problem in $L^p$ for Laplace's equation in Lipschitz domains},
   journal={Annals of Math.},
   volume={125}, 
   date={1987},
   number ={3}, 
   pages={437-465},
}

\bib{DKV}{article}{
   author={Dahlberg, B.},
   author={Kenig, C.},
   author={Verchota, G.},
   title={The Dirichlet problem for the biharmonic equation in a Lipschitz domains},
   journal={Annales de l'institut Fourier},
   volume={36}, 
   date={1986},
   number ={3}, 
   pages={109-135},
}

\bib{DFM}{article}{
   author={David, G.},
   author={Feneuil, J.},
   author={Mayboroda,S.},
   title={Harmonic measure on sets of codimension larger than one.},
   journal={preprint, https://arxiv.org/abs/1608.01395},
}

\bib{DFM2}{article}{
   author={David, G.},
   author={Feneuil, J.},
   author={Mayboroda,S.},
   title={Dahlberg's theorem in higher codimension.},
   journal={preprint, https://arxiv.org/abs/1704.00667v1}
   }
		
 \bib{DH}{article}{
   author={Dindo{\v{s}}, M.},
   author={Sukjung, H.},
   title={The Dirichlet boundary problem for second order parabolic operators satisfying Carleson condition},
   journal={Rev. Mat. Iberoam.}, 
   volume={34},
   number={2},
   pages={767--810}, 
   date={2018},
}

\bib{DHM}{article}{
   author={Dindo{\v{s}}, M.},
   author={Sukjung, H.},
   author={Mitrea, M.},
   title={The $L^p$ Dirichlet boundary problem for second order Elliptic Systems with rough coefficients},
   journal={arXiv:1708.02289},
}

  \bib{DPcplx}{article}{
   author={Dindo\v{s}, M.},
   author={Pipher, J.},
   title={Regularity theory for solutions to second order elliptic operators with complex coefficients and the $L^p$ Dirichlet problem},
   journal={arXiv:1612.01568},
    }

   \bib{DPP}{article}{
   author={Dindo\v{s}, M.},
   author={Petermichl, S.},
   author={Pipher, J.},
   title={The $L^p$ Dirichlet problem for second order elliptic operators
   and a $p$-adapted square function},
   journal={J. Funct. Anal.},
   volume={249},
   date={2007},
   number={2},
   pages={372--392},
    }

   \bib{DPR}{article}{
   author={Dindo\v{s}, M.},
   author={Pipher, J.},
   author={Rule, D.},
   title={The boundary value problems for second order elliptic operators satisfying a Carleson condition},
   journal={Comm. Pure Appl. Math.}, 
   volume={70},
   number={7},
   pages={1316--1365}, 
   year={2017},
 }

    \bib{Lan}{article}{
   author={Langer, M.},
   title={$L^p$-contractivity of semigroups generated by parabolic matrix differential operators},
   journal={The Maz\'ya Anniversary Collection, On Maz\'ya?s work in functional analysis, partial differential equations and applications, Birkh\"auser},
   volume={1},
   date={1999},
   number={3},
   pages={307--330},
}
	
 \bib{FSt}{article}{
   author={Fefferman, C.},
   author={Stein, E.},
   title={$H^p$ spaces of several variables},
   journal={Acta Mat.},
   volume={129},
   date={1972},
   pages={137--193},
 }

  \bib{Gi}{book}{
  author={Giaquinta, M.},
  title={Multiple Integrals in the Calculus of Variations and Nonlinear Elliptic Systems}, 
  series={Annals of Math. Studies},
  volume={105},
  publisher={Princeton Univ. Press},
  year={1983},
}  

\bib{HKMPreg}{article}{
   author={Hofmann, S.},
   author={Kenig, C.},
   author={Mayboroda, S.},
   author={Pipher, J.},
   title={The regularity problem for second order elliptic operators with complex-valued bounded measurable coefficients},
   journal={Math. Ann.},
   volume={361},
   date={2015},
   issue={3--4},
   pages={863--907},
}

\bib{HM}{article}{
 author={Hofmann, S.},
   author={Martell, J.},
   title={$L^p$ bounds for Riesz transforms and square roots associated to second order elliptic operators},
   journal={Pub. Mat.},
   volume={47},
   date={2003},
   pages={497--515},
}

\bib{HMTo}{article}{
 author={Hofmann, S.},
   author={Martell, J.},
   author={Toro, T.}
   title={$A_\infty$ implies NTA for a class of variable coefficient elliptic
  operators },
   journal={preprint, https://arxiv.org/abs/1611.09561 },
}

   \bib{KKPT}{article}{
   author={Kenig, C.},
   author={Koch, H.},
   author={Pipher, J.},
   author={Toro, T.},
   title={A new approach to absolute continuity of elliptic measure, with
   applications to non-symmetric equations},
   journal={Adv. Math.},
   volume={153},
   date={2000},
   number={2},
   pages={231--298},
    }

    \bib{KP2}{article}{
   author={Kenig, C.},
   author={Pipher, J.},
   title={The Neumann problem for elliptic equations with nonsmooth
   coefficients},
   journal={Invent. Math.},
   volume={113},
   date={1993},
   number={3},
   pages={447--509},
}

   \bib{KP01}{article}{
   author={Kenig, C.},
   author={Pipher, J.},
   title={The Dirichlet problem for elliptic equations with drift terms},
   journal={Publ. Math.},
   volume={45},
   date={2001},
   number={1},
   pages={199--217},
  }

   \bib{May}{article}{
   author={Mayboroda, S.},
   title={The connections between Dirichlet, regularity and Neumann problems for second order elliptic operators with complex bounded measurable coefficients},
   journal={Adv. Math.},
   volume={225},
   date={2010},
   number={4},
   pages={1786--1819},
  }
  
  \bib{Sh1}{article}{
  author={Shen, Z.},
  title={Bounds of Riesz transforms on $L^p$ spaces for second order elliptic operators},
  journal={Ann. Inst. Fourier (Grenoble)},
  volume={55},
  year={2005},
  issue={1},
  pages={173--197},
}  

 \bib{Sh2}{article}{
  author={Shen, Z.},
  title={The $L^p$ Dirichlet problem for elliptic systems on Lipschitz domains},
  journal={Math. Res. Lett.},
  volume={13},
  year={2006},
  issue={1},
  pages={143--159},
}

  \bib{Tay}{book}{
   author={Taylor, E.},
   title={Partial Differential Equations I: Basic Theory},
   series={Springer},
   date={2010},
}

\bib{WZ}{article}{
	author={Wei, W.},
	author={Zhang, Z.},
	title={$L^p$ resolvent estimates for variable coefficient elliptic systems on Lipschitz domains},
	journal={Anal. Appl. (Singap.)},
	volume={13},
	year={2015},
	issue={6},
	pages={591--609},
}

   \end{biblist}
\end{bibdiv}

\end{document}